 \definecolor{darkblue}{RGB}{0,0,160}
\DeclareSymbolFont{usualmathcal}{OMS}{cmsy}{m}{n}
\DeclareSymbolFontAlphabet{\mathcal}{usualmathcal}
\def\d{{\rm d}}
\def\G{{\mathcal G}}
\def \and{\qquad\text{and}\qquad}
\newcommand{\supp}{\mathrm{supp}\,}
\newcounter{thms}
\newcounter{other}
\numberwithin{other}{section}
\newtheorem{theorem}[thms]{Theorem}
\newtheorem*{theorem*}{Theorem}
\newtheorem*{proposition*}{Proposition}
\numberwithin{corollary}{thms}
\newtheorem{lemma}[other]{Lemma}
\newtheorem{definition}[other]{Definition}
\newtheorem{remark}[other]{Remark}
\newtheorem{conjecture}{Conjecture}
\def\vint_#1{\mathchoice%
      {\mathop{\kern 0.2em\vrule width 0.6em height 0.69678ex depth -0.58065ex
              \kern -0.8em \intop}\nolimits_{\kern -0.4em#1}}%
      {\mathop{\kern 0.1em\vrule width 0.5em height 0.69678ex depth -0.60387ex
              \kern -0.6em \intop}\nolimits_{#1}}%
      {\mathop{\kern 0.1em\vrule width 0.5em height 0.69678ex depth -0.60387ex
              \kern -0.6em \intop}\nolimits_{#1}}%
      {\mathop{\kern 0.1em\vrule width 0.5em height 0.69678ex depth -0.60387ex
              \kern -0.6em \intop}\nolimits_{#1}}}
\def\vintslides_#1{\mathchoice%
      {\mathop{\kern 0.1em\vrule width 0.5em height 0.697ex depth -0.581ex
              \kern -0.6em \intop}\nolimits_{\kern -0.4em#1}}%
      {\mathop{\kern 0.1em\vrule width 0.3em height 0.697ex depth -0.604ex
              \kern -0.4em \intop}\nolimits_{#1}}%
      {\mathop{\kern 0.1em\vrule width 0.3em height 0.697ex depth -0.604ex
              \kern -0.4em \intop}\nolimits_{#1}}%
      {\mathop{\kern 0.1em\vrule width 0.3em height 0.697ex depth -0.604ex
              \kern -0.4em \intop}\nolimits_{#1}}}
\newcommand{\aveint}[2]{\mathchoice%
      {\mathop{\kern 0.2em\vrule width 0.6em height 0.69678ex depth -0.58065ex
              \kern -0.8em \intop}\nolimits_{\kern -0.45em#1}^{#2}}%
      {\mathop{\kern 0.1em\vrule width 0.5em height 0.69678ex depth -0.60387ex
              \kern -0.6em \intop}\nolimits_{#1}^{#2}}%
      {\mathop{\kern 0.1em\vrule width 0.5em height 0.69678ex depth -0.60387ex
              \kern -0.6em \intop}\nolimits_{#1}^{#2}}%
      {\mathop{\kern 0.1em\vrule width 0.5em height 0.69678ex depth -0.60387ex
              \kern -0.6em \intop}\nolimits_{#1}^{#2}}}
\renewcommand*{\cdots}{%
  \mathinner{{\cdotp}{\cdotp}{\cdotp}}%
}
\numberwithin{equation}{section}
\title[A cone restriction estimate using polynomial partitioning]{A cone restriction estimate using polynomial partitioning}
 \author[Yumeng Ou]{Yumeng Ou}
 \address{\noindent Department of Mathematics, University of Pennsylvania, Philadelphia, PA 19104, USA }
\email{yumengou@sas.upenn.edu} 
 \author[Hong Wang]{Hong Wang}
 \address{\noindent School of Mathematics, Institute for Advanced Study, Princeton, NJ 08540, USA }
\email{hongwang@ias.edu} 
\subjclass[2010]{Primary: 42B10. Secondary: 42B20}
\keywords{Restriction estimate, polynomial method}
\begin{document}
\begin{abstract} 
We obtain improved Fourier restriction estimate for the truncated cone using the method of polynomial partitioning in dimension $n\geq 3$, which in particular solves the cone restriction conjecture for $n=5$, and recovers the sharp range for $3\leq n\leq 4$. The main ingredient of the proof is a $k$-broad estimate for the cone extension operator, which is a weak version of the $k$-linear cone restriction conjecture for $2\leq k\leq n$.
\end{abstract}
\maketitle

\section{Introduction and main results}

In this article, we obtain an improved Fourier restriction estimate for the cone in all dimensions $n\geq 3$, and in particular solve the cone restriction conjecture of Stein \cite{Stein} in dimension $n=5$. 

The  \emph{Fourier restriction conjecture} is  one of the central open problems in harmonic analysis. 
It concerns a very basic question: whether $\hat{f}$, the Fourier transform of a function $f$, can be meaningfully restricted onto a hypersurface. 
Stein \cite{Stein} conjectured that for well curved surfaces such as sphere, paraboloid, or the cone studied in the present article, this is indeed the case. The precise statement of Stein's conjecture  about the cone is the following. 
\begin{conjecture}[Cone restriction conjecture]\label{conj}
Let $n\geq 3$ and $\mathcal{C}$ be the truncated cone in $\mathbb{R}^n$ defined in (\ref{def:cone}) below. For all $1\leq p<\frac{2(n-1)}{n}$, there holds
\[
\left\|\hat{f}|_{\mathcal{C}}\right\|_{L^p(\mathcal{C};\, d\sigma)}\leq C_{p} \|f\|_{L^p(\mathbb{R}^n)},\quad\quad \forall f\in L^p(\mathbb{R}^n),
\]where $d\sigma$ denotes the surface measure on $\mathcal{C}$.
\end{conjecture}

This conjecture was so far known to be true only in dimensions $n=3$ and $4$, proved by Barcelo \cite{Barcelo} and Wolff \cite{Wolff} respectively. The main contribution of the present article is the resolution of the conjecture in dimension $n=5$ and improved partial results towards higher dimensional cases. 

The Fourier restriction conjecture on various surfaces with enough curvature is directly connected to many open conjectures in analysis and PDE including the Kakeya conjecture, the Bochner--Riesz conjecture, and the local smoothing conjecture for wave equations. It is also known that certain versions of the restriction estimates can be used to study problems in other related fields such as geometric measure theory (e.g. the Falconer's distance set conjecture) and analytic number theory (e.g. estimating the number of solutions to Diophantine equations). It has been extensively studied for decades, and we refer to \cite{BG, Guth1, Guth2, BD} and the references therein for historical remarks on the problem and the aforementioned connections. However, there are very few surfaces and dimensions for which a sharp restriction theorem is known. For example, the restriction conjecture for the paraboloid and the sphere remains open for $n\geq 3$. Moreover, it is known (\cite{Tao2}) that there is a certain link between the restriction estimate for the cone in $\mathbb{R}^{n+1}$ and that for the paraboloid, sphere, or other conic sections in $\mathbb{R}^n$, which suggests possible further applications of our result.

We now describe the precise formulation of the main theorem. Let $B^{n-1}$ be the unit ball in $\mathbb{R}^{n-1}$ and denote its closure by $\bar{B}^{n-1}$. Given a function $f: 2\bar{B}^{n-1}\setminus B^{n-1}\to \mathbb{C}$, where $2\bar{B}^{n-1}\setminus B^{n-1}$ denotes the closed annulus $\{\xi\in \mathbb{R}^{n-1}:\,1\leq |\xi|\leq 2\}$, define the \emph{truncated cone} as
\begin{equation}\label{def:cone}
\mathcal{C}=\left\{(\xi,\xi_n)\in\mathbb{R}^{n-1}\times\mathbb{R}:\,\xi_1^2+\cdots+\xi_{n-1}^2=\xi_n^2,\,\, 1\leq \xi_n \leq 2 \right\}.
\end{equation}Then the associated Fourier extension operator from the truncated cone $\mathcal{C}$ is
\[
Ef(x):=\int_{2\bar{B}^{n-1}\setminus B^{n-1}} e^{i(x_1\xi_1+\cdots+x_{n-1}\xi_{n-1}+x_n|\xi|)} f(\xi)\,\d\xi.
\]

It is well known that a Fourier restriction estimate is equivalent to a Fourier extension estimate by a short duality argument. Therefore, our main restriction estimate can be formulated as the following:

\begin{theorem}\label{main}
For $n\geq 3$, the extension operator $E$ from the cone satisfies
\[
\|Ef\|_{L^p(\mathbb{R}^n)}\leq C_p \|f\|_{L^p(2\bar{B}^{n-1}\setminus B^{n-1})},\quad \forall f\in L^p(2\bar{B}^{n-1}\setminus B^{n-1})
\]whenever
\begin{equation}\label{p range}
p>
\begin{cases}
4 &\text{if}\,\, n=3,\\
2\cdot \frac{3n+1}{3n-3} &\text{if}\,\, n>3\,\,\text{odd},\\
2\cdot \frac{3n}{3n-4}&\text{if}\,\, n>3\,\,\text{even}.
\end{cases}
\end{equation}
\end{theorem}

When $n=3$ and $n=4$, this recovers the sharp range of $p$ ($p>4$ for $n=3$ by Barcelo \cite{Barcelo} and $p>3$ for $n=4$ by Wolff \cite{Wolff}) for which the cone restriction estimate holds true. When $n=5$, Theorem \ref{main} derives for the first time the sharp range $p>\frac{8}{3}$ for the cone restriction estimate. When $n\geq 6$, Theorem \ref{main} provides new partial progress towards the sharp range $p>\frac{2(n-1)}{n-2}$ corresponding to Conjecture \ref{conj}. Before our work, the best known range for $n>4$ was $p>\frac{2(n+2)}{n}$, proved by Wolff \cite{Wolff} using a bilinear method.

It is also interesting to study $L^q\to L^p$ restriction estimate for $q\neq p$. Stein conjectured that $E: L^q\to L^p$ whenever $p>\frac{2(n-1)}{n-2}$ and $q'\leq \frac{n-2}{n}p$ based on the Knapp example. When $q>p$, such estimate is immediately implied by Theorem \ref{main} using  H\"older's inequality.  When $q<p$, one can obtain the following estimate by slightly modifying the proof of Theorem \ref{main}.
\begin{theorem}\label{mainpq}
For $n\geq 3$, the operator $E$ defined above satisfies the estimate
\[
\|Ef\|_{L^p(\mathbb{R}^n)}\leq C_{p,q} \|f\|_{L^q(2\bar{B}^{n-1}\setminus B^{n-1})},\quad \forall f\in L^q(2\bar{B}^{n-1}\setminus B^{n-1})
\]whenever the tuple $(p,q,k)$ is admissible in the sense that $q>2$, $2\leq k\leq n$ and
\begin{equation}\label{admpq}
\begin{cases}
p> 2\cdot\frac{n+2}{n},\,q'\leq \frac{n-2}{n}p, &\text{if}\,\, k=2,\\
p> 2\cdot\frac{n+k}{n+k-2},\, p\geq\frac{n}{\frac{2n-k-1}{2}-\frac{n-k+1}{q}}, &\text{if}\,\,k\geq 3.
\end{cases}
\end{equation}

\end{theorem}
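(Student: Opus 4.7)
The plan is to follow the polynomial partitioning scheme that proves Theorem \ref{main}, modifying it so that the induction variable is $\|f\|_{L^q}$ in place of $\|f\|_{L^p}$. I would split into two cases according to the value of $k$.

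For $k=2$, the conditions $p>2(n+2)/n$ and $q'\leq (n-2)p/n$ coincide with the full $(p,q)$-range of Wolff's bilinear cone extension estimate, and the claimed linear bound follows from the bilinear one via the standard bilinear-to-linear reduction of Tao--Vargas--Vega. This case is essentially independent of the polynomial partitioning machinery.

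For $k\geq 3$, I would first prove an $L^q$-version of the $k$-broad estimate established in the proof of Theorem \ref{main}:
\[
\|Ef\|_{BL^p_{k,A}(B_R)} \lesssim R^\epsilon \|f\|_{L^q},
\]
valid whenever $p>2(n+k)/(n+k-2)$ and $p\geq n/\bigl((2n-k-1)/2-(n-k+1)/q\bigr)$. The $k$-broad estimate from the proof of Theorem \ref{main} naturally produces $\|f_\tau\|_{L^2}$ on the right-hand side, where $\tau$ ranges over $R^{-1/2}$-caps in the annulus $2B^{n-1}\setminus B^{n-1}$. Applying H\"older on each cap and summing over $\tau$ converts $\|f_\tau\|_{L^2}$ into $\|f_\tau\|_{L^q}$ at the cost of a fixed power of $R$; weighed against the multilinear Kakeya contribution, this $R$-power produces exactly the second constraint on $p$, with the factor $(n-k+1)$ accounting for the number of ``short'' directions in a $k$-tuple of transverse caps, and the assumption $q>2$ ensuring the H\"older step goes in the right direction.

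With this broad estimate in hand, I would rerun the polynomial partitioning induction of Theorem \ref{main} nearly verbatim, the only change being that the cell-wise induction hypothesis delivers $\|Ef_i\|_{L^p(O_i)}\lesssim R^\epsilon\|f_i\|_{L^q}$ instead of $\lesssim R^\epsilon\|f_i\|_{L^p}$. The algebraic fact that lets the iteration close is that $p\geq q$, so for nonnegative sequences $\ell^1\hookrightarrow\ell^{p/q}$; combined with the natural cap-/tube-level orthogonality $\sum_i\|f_i\|_{L^q}^q\lesssim\|f\|_{L^q}^q$, this yields
\[
\sum_i\|f_i\|_{L^q}^p \;\leq\;\Bigl(\sum_i\|f_i\|_{L^q}^q\Bigr)^{p/q}\;\lesssim\;\|f\|_{L^q}^p,
\]
which plays the role of the $L^p$-orthogonality used in the proof of Theorem \ref{main}. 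Every remaining step---the transverse/tangential dichotomy on the cone, the low-$\|f\|_\infty$ reduction, and the induction on scales---then carries over unchanged. The main obstacle I anticipate is the careful calibration of exponents in the $L^q$ broad estimate, namely verifying that H\"older combined with the multilinear Kakeya input reproduces the second constraint on $p$ exactly rather than a strictly weaker inequality; everything downstream is a formal substitution.
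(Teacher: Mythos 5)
Your proposal takes a genuinely different route from the paper, and both halves run into trouble.

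For $k\geq 3$, you propose to prove an $L^q$ version of the $k$-broad estimate, with the second constraint on $p$ arising already at that stage from H\"older on caps and a ``multilinear Kakeya contribution.'' This misidentifies where the $q$-dependence enters. The paper never strengthens the broad estimate: since $q>2$ and $2B^{n-1}\setminus B^{n-1}$ has finite measure, $\|f\|_{L^2}\lesssim\|f\|_{L^q}$, so the $L^2\to BL^p$ estimate of Theorem~\ref{kBroadThm} already trivially gives $\|Ef\|_{BL^p_{k,A}(B_R)}\lesssim R^\epsilon\|f\|_{L^q}$ under the single constraint $p\geq 2(n+k)/(n+k-2)$. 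The second, $q$-dependent constraint $p\geq n/\big((2n-k-1)/2-(n-k+1)/q\big)$ comes exclusively from the \emph{narrow} part of the argument: after Bourgain--Demeter decoupling produces an $\ell^2$ sum over $\tau\in V_a$, one applies H\"older to pass from $\ell^2$ to $\ell^q$ (instead of $\ell^p$ as in Theorem~\ref{main}), performs the Lorentz rescaling for each $f_\tau$, and demands that the net exponent of $K$ be negative so the induction on scales closes. There is no multilinear Kakeya input anywhere in this argument, and the $k$-broad estimate does not ``produce $\|f_\tau\|_{L^2}$ for $R^{-1/2}$-caps'' on the right. Your proposed cell-level inequality $\sum_i\|f_i\|_{L^q}^q\lesssim\|f\|_{L^q}^q$ is also not justified: the cell decomposition into $f_i$ reuses each wave packet in up to $D$ cells, so the quasi-orthogonality that the paper exploits is strictly an $L^2$ phenomenon via Plancherel, not an $L^q$ one.

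For $k=2$, you assert the claimed range coincides with ``the full $(p,q)$-range of Wolff's bilinear cone estimate'' and that the linear estimate follows from Wolff plus the bilinear-to-linear reduction alone, independently of polynomial partitioning. This is not the paper's route and does not appear to cover the claimed range. Wolff gives $R(2\times 2\to p)$ only, and the TVV-type bilinear-to-linear conversion requires the target $(p,q)$ to satisfy $2p>2(n-1)/(n-2)$; for $n=3$ and $n=4$ this threshold exceeds (resp.\ equals) $2(n+2)/n$, so a portion of the $k=2$ range in~(\ref{admpq}) is not reachable by TVV at all. The paper instead proves the open part of the $k=2$ range by the same broad-plus-decoupling induction used for all $k$, and only invokes the bilinear interpolation (interpolating Wolff's estimate with the bilinear estimate implied by the already-proved interior) to pick up the boundary $q'=\frac{n-2}{n}p$. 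Finally, your proposal never addresses the boundary cases $q'=\frac{n-2}{n}p$ (for $k=2$) or $p=n/\big((2n-k-1)/2-(n-k+1)/q\big)$ (for $k\geq 3$), which in the paper require this additional bilinear interpolation step and are not reachable by the induction alone.
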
 
For each fixed $n$, one can optimize the range of $L^q\to L^p$ restriction estimate above by choosing the most suitable $k$. In particular, in the case $n=5$, taking $k=3$, Theorem \ref{mainpq} implies the optimal conjectured range $p>\frac{8}{3}, \,q'\leq \frac{3}{5}p$. The result in the open range of (\ref{admpq}) follows from a similar argument for Theorem \ref{main}. In order to obtain the end point estimate, we apply the bilinear interpolation with a bilinear cone restriction estimate obtained in \cite{Wolff}. The interpolation argument is adapted from the work of Tao, Vargas and Vega \cite{TVV}, where the  paraboloid version of the question is studied. 

We prove the theorems above using polynomial partitioning. 
 The idea of applying polynomial method in harmonic analysis dates back to the resolution of the finite field Kakeya problem by Dvir \cite{D}. Later on, Guth and Katz  introduced in \cite{GK}  polynomial partitioning techniques to solve the Erd\"os distinct distances problem in combinatorics. In 2014, Guth \cite{Guth1, Guth2} introduced polynomial partitioning into the study of restriction estimates (also see Wang \cite{Wang} and Hickman--Rogers \cite{HR} for further refinements), which was later used by Du, Guth and Li \cite{DGL} to solve the Schr\"{o}dinger maximal estimate in $\mathbb{R}^2$.

More precisely, polynomial partitioning will be used in Section \ref{SecBroad} where we prove a $k$-broad restriction inequality on the cone (Theorem \ref{kBroadThm}), which is a weak version of the $k$-linear restriction estimate (\ref{klinear}). The $k$--broad to linear reduction (i.e. how Theorem \ref{kBroadThm} implies Theorem \ref{main}) is similar to the arguments in \cite{Guth2} for the paraboloid: it will be obtained by the $k$-broad estimates together with decoupling and the Lorentz rescaling. 

Compared to the case of the paraboloid treated in \cite{Guth1, Guth2}, the main novelty of our argument is as follows. In the paraboloid case, polynomial partitioning reduces the problem to a lower dimensional problem using the so-called \emph{equidistribution property}. This property, however, fails to hold  in the case of the cone. The failure essentially boils down to the fact that the cone has vanishing curvature in one direction at each point, thus the geometry of the resulting wave packets is more subtle. To overcome this geometric obstruction, we remove a negligible part of $Ef$ and show that the equidistribution property holds for the remaining part.  We leave more detailed discussion in this regard to Section \ref{sec: equi}. In addition, an important ingredient in the induction by scales argument is to understand how the wave packet decomposition (see Section \ref{sec: pre} for definition) at various scales are related to each other. This is another step in the proof where the cone has to be treated very differently from the paraboloid. We address this question in Section \ref{subsec: largesmall}, which seems to be of its own interest. 
 
The article is planned as follows. In Section \ref{sec: pre}, we recall several common notations and basic tools in restriction theory (e.g. wave packet decomposition). Then, in Section \ref{SecBroad} we introduce the aforementioned $k$-broad restriction inequality (Theorem \ref{kBroadThm}), which will be applied to obtain the main results Theorem \ref{main} and \ref{mainpq} in Section \ref{linearmain}. The proof of Theorem \ref{kBroadThm} is provided in Section \ref{sec: proof} using polynomial partitioning, before which the basic setup of the polynomial partitioning method is introduced in Section \ref{strategy}.

\section{Preliminaries}\label{sec: pre}
\subsection{Notations.}
Throughout the paper, we work with smooth functions $f,g:\, 2\bar{B}^{n-1}\setminus B^{n-1}\to \mathbb{C}$. We use $B_R^n$ to denote an arbitrary ball in $\mathbb{R}^n$ of radius $R$ and oftentimes write $B_R=B_R^n$ for short when the dimension of the space is clear from the context. The $\alpha$-neighborhood of a set $E$ is denoted by $N_\alpha(E)$. Let $Z$ be an algebraic variety in $\mathbb{R}^n$, then its tangent plane at $z\in Z$ is denoted by $T_{z}Z$.

Our arguments will involve frequently a small parameter $\epsilon>0$ and a large parameter $R>1$. Given positive numbers $A,B$ and a list of quantities $L$, we use $A\lesssim_L B$ to denote $A\leq C_L B$ for some absolute constant depending only on $L$ and possibly the dimension $n$. Similarly for $A\gtrsim_L B$. $A\sim_L B$ is used if both $A\lesssim_L B$ and $A\gtrsim_L B$ hold. And $O_L(1)$ denotes a quantity that is smaller than a constant depending on $L$ only. Moreover, $A\lessapprox B$ if $A\leq C_\epsilon R^\epsilon B$ for any $\epsilon>0$, $R>1$. 

We say a quantity is ${\rm RapDec}(R)$ if it is bounded by a huge negative power of $R$, which makes it negligible in our arguments. A function $Ef$ is said to be \emph{essentially supported} in a set $\Omega$ with an underlying parameter $R$ if all appropriate norms concerned in the problem of the tail of $Ef$ outside $\Omega$ is ${\rm RapDec}(R)$. 

\subsection{Wave packet decomposition}
We briefly recall the wave packet decomposition,  an essential tool in our argument. Fix a large parameter $R\gg 1$. Cover the region $2\bar{B}^{n-1}\setminus B^{n-1}$  by finitely overlapping sectors $\theta$ of length $1$ (in the radial direction) and angular width $R^{-1/2}$. Let $\{\psi_{\theta}\}$ be a smooth partition of unity subordinate to this cover, and write $f=\sum_{\theta} \psi_{\theta}f$. 

Next, we break up $\psi_{\theta}f$ according to frequency.  We do it in two steps. First, we cover $\mathbb{R}^{n-1}$ by finitely overlapping cubes of side length $\sim R^{\frac{1}{2}+\delta}$, centered at $v\in R^{\frac{1}{2}+\delta} \mathbb{Z}^{n-1}$. Here $\delta>0$ is a fixed small parameter. Let $\{\eta_{v}\}$ be a smooth partition of unity subordinate to this cover. We write 
$$ f=\sum_{\theta,\,v} (\eta_v (\psi_{\theta} f)^{\wedge} )^{\vee}.$$

Second, we break the function $ (\eta_v (\psi_{\theta} f)^{\wedge} )^{\vee}$ into even finer pieces according to $\theta$. More precisely, for each $\theta$, let $\xi_{\theta}=(\xi_{\theta,1}, \dots, \xi_{\theta,n-1})$ be the point on the central line of $\theta$ with $|\xi_{\theta}|=1$. We cover the $R^{1/2+\delta}$--cube centered at $v$ by parallel thin plates $P^{\ell}_{\theta,\,v}$ of radius $\sim R^{\frac{1}{2}+\delta}$ and thickness $R^{\frac{\delta}{2}}$, where the normal direction of $P^{\ell}_{\theta,v}$ is $\xi_{\theta}$ and $\ell = 1, \dots, \sim R^{\frac{1}{2}}$. Let $\left\{\eta^{\ell}_{\theta,v}\right\}$ be a smooth partition of unity subordinate to this cover.  We write 
$$f= \sum_{\theta,\,v, \,\ell} (\eta^{\ell}_{v, \theta}\eta_v (\psi_{\theta} f)^{\wedge} )^{\vee}.$$
Note that $(\eta_v)^{\wedge}(\xi)$ is rapidly decaying for  $\xi$ outside of $\left\{\xi\in \mathbb{R}^{n-1}:\, |\xi|\gtrsim R^{-\frac{1}{2}} \right\}$ and $(\eta^{\ell}_{\theta,v})^{\wedge}(\xi)$ is rapidly decaying for $\xi\in \mathbb{R}^{n-1}$ outside a thin tube of length $1$ and radius $R^{-\frac{1}{2}}$ pointing in direction $\xi_\theta$. We can choose smooth functions $\widetilde{\psi}_{\theta}$  so that $\widetilde{\psi}_{\theta}$ is essentially  supported on $\theta$, and $\widetilde{\psi}_{\theta}=1$ on a $cR^{-1/2}$ neighborhood of the support of $\psi_{\theta}$ for a small constant $c>0$. Now we define 
$$f^{\ell}_{\theta,v} := \widetilde{\psi}_{\theta} [ (\eta^{\ell}_{v, \theta}\eta_v (\psi_{\theta} f)^{\wedge} )^{\vee}].$$

Because of the rapid decay of $\eta_v^{\vee}$ and $(\eta^{\ell}_{\theta,v})^{\vee}$, 
$$\left\|f^{\ell}_{\theta,v} - (\eta^{\ell}_{v, \theta}\eta_v (\psi_{\theta} f)^{\wedge} )^{\vee}\right\|_{L^{\infty}} \leq {\rm RapDec}(R) \|f\|_{L^2}. $$

Therefore, one has the decomposition
$$f= \sum_{\theta,\,v,\,\ell} f_{\theta,v}^\ell +{\rm Err}, \text{~where~}\|{\rm Err}\|_{L^{\infty}} \leq {\rm RapDec}(R)\|f\|_{L^2}.$$

The functions $\left\{f_{\theta,v}^\ell\right\}$  are almost orthogonal. For any set $\mathcal{T}$ of $(\theta,\,v,\,\ell)$, one has 
\[
\left\|\sum_{(\theta,v,\ell)\in \mathcal{T}}f_{\theta,v}^\ell\right\|_{L^2}^2\sim\sum_{(\theta,v,\ell)\in \mathcal{T}}\left\|f_{\theta,v}^\ell\right\|_{L^2}^2.
\]

When restricted inside a large ball $B_R$ centered at the origin with radius $R$, $Ef_{\theta,v}^\ell$ is essentially supported on a thin tube $T_{\theta,v}^\ell$ of length $R^{\delta}$ in the \emph{mini direction} $M(\theta) = (\xi_{\theta}, 1)$, $R$ in the \emph{long direction} $L(\theta)=(\xi_{\theta},-1)$, and $R^{1/2+\delta}$ in the rest of the directions. More precisely, $T^{\ell}_{\theta,v}$ can be identified with the $R^{\delta}$-neighborhood of the Minkowski sum  $P^{\ell}_{\theta,v}+ RL(\theta)$, where $P^{\ell}_{\theta,v}$ is viewed as a subset of $\mathbb{R}^n$ with the $n$th coordinate zero and by an abuse of notation, $RL(\theta)$ means the line segment $\{t L(\theta): 0\leq t\leq R\}$. Note that there is a one-to-one correspondence between the tubes $T^\ell_{\theta,v}$ and the parallelotopes $N_{R^\delta}(P^{\ell}_{\theta,v}+ RL(\theta))$ and they are of comparable sizes: $\frac{1}{10}T^\ell_{\theta,v}\subset N_{R^\delta}(P^{\ell}_{\theta,v}+ RL(\theta)) \subset 10 T^\ell_{\theta,v}$. Therefore, we do not distinguish them in the following.

The following lemma shows that $Ef^\ell_{\theta,v}$ is essentially supported on $T^\ell_{\theta,v}$.

\begin{lemma}
	If $x\in B_R\setminus T^{\ell}_{\theta,v}$, then 
	$$|Ef^{\ell}_{\theta,v}(x)|\leq {\rm RapDec}(R)\|f\|_{L^2}.$$
\end{lemma}
\begin{proof}
	Let $h$ be a function on $\mathbb{R}^{n-1}$ satisfying $\supp h\subset 2\bar{B}^{n-1}\setminus B^{n-1}$. Then the Fourier transform $\mathcal{F}$ of $Eh$ in $\mathbb{R}^n$ can be written as 
	$$\mathcal{F}(Eh)(\xi, \xi_n) = h(\xi) \delta_{\xi_n=|\xi|}.$$
	
	Similarly, $\mathcal{F}\big(E(hg)\big)= h(\xi) \delta_{\xi_n=|\xi|} \cdot g(\xi)$, hence 
	$$E(hg)= Eh \ast [ \hat{g}\delta_{x_n=0}]$$ where $\hat{g}$ denotes the Fourier transform of $g$ in $\mathbb{R}^{n-1}$. 
	
	By choosing $h=\widetilde{\psi}_{\theta}$ and $g= (\eta^{\ell}_{\theta,v} \eta_v (\psi_{\theta} f)^{\wedge})^{\vee}$, one can write $Ef^{\ell}_{\theta,v}=E(hg)$. Note that $\hat{g}$ is supported on $P^{\ell}_{\theta,v}$. 
	
	We use stationary phase to estimate $Eh$. Inside $B_R$, $|Eh(x)| \leq {\rm RapDec}(R)$ if $x$ is outside a thin tube $T^{0}_{\theta}$ centered at the origin, of length $R$ in the long direction $L(\theta)$, $R^{\delta}$ in the mini direction $M(\theta)$, and $R^{1/2+\delta}$ in the rest of directions. Recall that $T^{\ell}_{\theta,v}$ can be identified with the $R^{\delta}$-neighborhood of the Minkowski sum $P^{\ell}_{\theta,v} +RL(\theta)$, which contains the Minkowski sum $P^{\ell}_{\theta,v}+T^{0}_{\theta}$, the support of $E(hg)$. Hence the desired result follows. Again, here $P^{\ell}_{\theta,v}$ is considered as a subset of $\mathbb{R}^n$ with the $n$th coordinate being zero. 
\end{proof}

Let $\mathcal{T}$ be a collection of wave packets of $f$. In our argument, oftentimes the terminology $f$ is \emph{concentrated on wave packets from $\mathcal{T}$} is used, which means that
\[
\sum_{(\theta,v,\ell)\notin\mathcal{T}} \|f_{\theta,v}^\ell\|^2_{L^2}\lesssim {\rm RapDec}(R)\|f\|_{L^2}.
\]

\section{A $k$-broad estimate for the cone}\label{SecBroad}
Using a standard $\epsilon$-removal trick \cite{Tao1}, one can reduce the desired global Fourier extension estimate
\[
\|Ef\|_{L^p(\mathbb{R}^n)}\lesssim_p \|f\|_{L^p(2\bar{B}^{n-1}\setminus B^{n-1})},\quad \forall p \text{ satisfies (\ref{p range})},
\]to the following local version:
\begin{equation}\label{eq: 1}
\|Ef\|_{L^p(B_R^n)}\lesssim_{p,\epsilon} R^\epsilon \|f\|_{L^p(2\bar{B}^{n-1}\setminus B^{n-1})},\quad \forall \epsilon>0,\,\forall R >1,\quad \forall p \text{ satisfies (\ref{p range})}.
\end{equation}(Indeed, the range (\ref{p range}) is open, and the $\epsilon$-removal trick enables one to conclude the global estimate at all $p>\bar{p}$ from the local estimate at $p=\bar{p}$.) 

In order to study estimates of the form (\ref{eq: 1}) with $E$ replaced by the Fourier extension operator from the paraboloid, Guth \cite{Guth2} introduced a useful strategy that decomposes $Ef$ restricted on $B_R$ into a \emph{broad} part and a \emph{narrow} part. The narrow part is locally supported in some lower dimensional subspace of $\mathbb{R}^n$ and can be treated using decoupling \cite{BD}  and induction on spacial scales. He thus reduced (\ref{eq: 1}) to the estimate of the broad part and successfully derived (\ref{eq: 1}) for a large range of $p$.

In this article, we prove (\ref{eq: 1}) for the cone extension operator following the same strategy. The narrow part of $Ef$ will be treated in Section \ref{linearmain}. In this section, together with Sections \ref{strategy} and \ref{sec: proof}, we deal with the broad part by formulating a $k$--broad norm and by proving a general $k$-broad estimate (Theorem \ref{kBroadThm}).
For many arguments in the following sections, the $k$-broad norm $BL^p_k$ behaves almost the same as the $L^p$ norm.

Fix a large constant $R$ and $K$ such that $K\ll R$. We decompose $2\bar{B}^{n-1}\setminus B^{n-1}$ in the frequency space into sectors $\tau$ of dimension $1\times K^{-1}\times\cdots\times K^{-1}$, i.e. length $1$ in the radial direction and $K^{-1}$ in the rest of the directions. Using a smooth partition of unity subordinate to the cover $\{\tau\}$, one writes $f=\sum_{\tau} f_\tau$ where $f_\tau=f\chi_\tau$. 

Let $G(\tau)=\bigcup_{\theta\subset\tau} L(\theta)$. Here, recall that $\theta$ is a sector of $2\bar{B}^{n-1}\setminus B^{n-1}$ of  angular radius $R^{-1/2}$, and $L(\theta)$ denotes the long direction of the wave packets determined by $\theta$. Then $G(\tau)\subset S^{n-1}$ is contained in a spherical cap with radius $\approx K^{-1}$, representing possible long directions of wave packets in $Ef_{\tau}$. For any subspace $V\subset \mathbb{R}^n$, we adopt the notation $\text{Angle}(G(\tau),V)$ to denote the smallest angle between any non-zero vectors $v\in V$ and $v'\in G(\tau)$. 

In the physical space, we decompose the ball $B_R\subset\mathbb{R}^n$  into small balls $B_{K^2}$. For each $B_{K^2}\subset B_R$, consider $\int_{B_{K^2}}|Ef_{\tau}|^p$ for every $\tau$. 

Heuristically, we say $Ef$ is \emph{$k$-narrow at $B_{K^2}$} if there exists $\Gamma$, the $K^{-2}$-neighborhood of some $(k-2)$-dimensional linear subspace of $\mathbb{R}^{n-1}$, such that $\int_{B_{K^2}} |Ef|^p$ is dominated by $\int_{B_{K^2}} |Ef_{\Gamma}|^p$, where $f_\Gamma$ is the restriction of $f$ on $\Gamma$.  If $Ef$ is not $k$-narrow at $B_{K^2}$, then we say it is \emph{$k$-broad at $B_{K^2}$} and we would have 
\begin{equation}\label{kbroad heuristic}
\int_{B_{K^2}} |Ef|^p \leq K^{O(1)} \int_{B_{K^2}} \sup_{\substack{\tau_1,\cdots,\tau_k:\\G(\tau_1)\wedge\cdots \wedge G(\tau_k)\gtrsim K^{-O(1)}}}\left(\Pi_{j=1}^{k} |Ef_{\tau_j}|^{\frac{p}{k}}\right).
\end{equation}In the above, $G(\tau_1)\wedge\cdots \wedge G(\tau_k)$ denotes the infimum of the wedge product $L(\theta_1)\wedge\cdots \wedge L(\theta_k)$ over all choices of sectors $\theta_j\subset \tau_j$ of angular radius $R^{-1/2}$, $j=1,\cdots,k$.

The $k$-broad norm of $Ef$, roughly speaking, will be defined as the sum of the right hand side of  inequality~(\ref{kbroad heuristic}) over those $B_{K^2}$ where $Ef$ is $k$-broad. However, in order to make the argument rigorous, we need a more technical definition of $k$-broad norm that carries similar heuristics.

Here are the details. For a fixed parameter $1<A\lesssim K^{\epsilon}$, define
\begin{equation}\label{kBroadDef}
\mu_{Ef}(B_{K^2}):=\min_{V_1,\ldots,V_A\,(k-1)\text{-subspace of}\,\,\mathbb{R}^n}\left(\max_{\tau:\,\text{Angle}(G(\tau),V_a)>K^{-2},\forall a}\int_{B_{K^2}}|Ef_\tau|^p\right).
\end{equation}Then for any open set $U$ being the union of some balls $B_{K^2}$, we define the \emph{$k$-broad} part of $\|Ef\|_{L^p(U)}$ by
\[
\|Ef\|_{BL^p_{k,A}(U)}^p:=\sum_{B_{K^2}\subset U}\mu_{Ef}(B_{K^2}).
\]In fact, if defined on each $B_{K^2}$ as a constant multiple of the Lebesgue measure, $\mu_{Ef}$ can be extended to be a measure on $B_R$. In particular, $\mu_{Ef}(B_R)=\|Ef\|_{BL^p_{k,A}(B_R)}^p$. Note that a similar measure is used in \cite{Guth2} for the study of the broad part of the extension operator from the paraboloid. There, a similar quantity of $\mu_{Ef}(B_{K^2})$ is defined but with a different angle condition $\text{Angle}(G(\tau),V_a)>K^{-1}$. Our angle condition is more relaxed hence makes the broad estimate slightly more difficult. However, this change is necessary for the cone; later in the narrow case (Section \ref{linearmain}), one needs to ensure that there are not too many sectors $\tau$ whose corresponding long directions are near a low dimensional subspace $V\subset \mathbb{R}^n$. We leave the more detailed discussion on why $K^{-2}$ would be enough to Section \ref{linearmain}.

The parameter $A$ is introduced to make the norm $BL^p_{k,A}$ behave more like a regular $L^p$ norm. In particular, it satisfies the following triangle inequality and H\"older's inequality, which are adapted directly from Lemma 4.1 and 4.2 of \cite{Guth2}. Note that even though we are working with the cone and with a different angle condition, the same arguments in \cite{Guth2} still work. We omit their proofs.

\begin{lemma}[Triangle inequality]\label{lem: tri}
Suppose that $1\leq p<\infty$, $f=g+h$ and $A=A_1+A_2$, where $A, A_i$ are nonnegative integers. Then
\[
\|Ef\|_{BL^p_{k,A}(U)}\lesssim \|Eg\|_{BL^p_{k,A_1}(U)}+\|Eh\|_{BL^p_{k,A_2}(U)}.
\]
\end{lemma}
\begin{lemma}[H\"older's inequality]\label{lem: Holder}
Suppose $1\leq p,p_1,p_2<\infty$, and $0\leq \alpha_1,\alpha_2\leq 1$ obey $\alpha_1+\alpha_2=1$ and
\[
\frac{1}{p}=\frac{\alpha_1}{p_1}+\frac{\alpha_2}{p_2}.
\]Suppose that $A=A_1+A_2$, then
\[
\|Ef\|_{BL^p_{k,A}(U)}\leq \|Ef\|^{\alpha_1}_{BL^{p_1}_{k,A_1}(U)}\|Ef\|^{\alpha_2}_{BL^{p_2}_{k,A_2}(U)}.
\]
\end{lemma}In order to be able to apply these inequalities many times in the argument (say, $O_\epsilon(1)$ times), one needs to choose $A$ sufficiently large (depending on $\epsilon$). The relation between the parameters $K, A, R$ is the following:
$$1\ll A \lesssim K^{\epsilon} \lesssim R^{\epsilon^2}.$$

The main result of this section is the following:
\begin{theorem}\label{kBroadThm}
For any $2\leq k\leq n$ and any $\epsilon>0$, there is a large constant $A$ so that
\begin{equation}\label{kBroad}
\|Ef\|_{BL^p_{k,A}(B_R)}\lesssim_{\epsilon} R^\epsilon \|f\|_{L^2(2\bar{B}^{n-1}\setminus B^{n-1})}
\end{equation} holds for any $p\geq \bar{p}(k,n):=2\cdot\frac{n+k}{n+k-2}$.
\end{theorem}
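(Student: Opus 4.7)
The plan is to prove Theorem~\ref{kBroadThm} by an induction on $R$ via polynomial partitioning, following the scheme Guth devised in \cite{Guth2} for the paraboloid $k$-broad estimate and adapting it to the cone. I would set up nested inductions: an outer induction on $R$, and an inner induction on $\|f\|_{L^2}^2$ at each fixed $R$. The base case $R\lesssim K^{O(1)}$ is immediate from $\|Ef\|_\infty \lesssim \|f\|_1 \lesssim \|f\|_{L^2}$. For the inductive step, apply polynomial partitioning to the measure $\mu_{Ef}$ on $B_R$ at degree $D$ (a large parameter chosen in terms of $\epsilon$); this produces a polynomial $P$ of degree $\leq D$ whose zero set $Z(P)$ decomposes $\mathbb{R}^n$ into $\lesssim D^n$ open cells $O_i$ with $\mu_{Ef}(O_i)\lesssim D^{-n}\mu_{Ef}(B_R)$. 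Either (\emph{cellular case}) at least half of $\mu_{Ef}(B_R)$ is carried by $\bigcup_i O_i \setminus W$, or (\emph{algebraic case}) at least half is carried by the wall $W := N_{R^{1/2+\delta}}(Z(P))\cap B_R$.

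In the cellular case, a wave packet tube $T_{\theta,v}^\ell$ is essentially a straight segment of length $R$, so by Bezout it meets at most $D+1$ of the cells. Grouping the packets by which cells their tubes touch into functions $f_i$, the essential orthogonality of wave packets gives $\sum_i \|f_i\|_{L^2}^2 \lesssim D\|f\|_{L^2}^2$, and each $f_i$ has strictly smaller $L^2$-mass than $f$, so the inner induction applies to each cell. Summing and comparing the $D^n$ cell count against the $D$-fold tube overlap against the $p$-th power yields the desired gain precisely when $p \geq \bar p(k,n) = 2(n+k)/(n+k-2)$. In the algebraic case, cover $W$ by balls $B_j$ of intermediate radius $\rho$ and split the wave packets at each $B_j$ into those tangent to $Z(P)$ within $B_j$ and those transverse. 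A transverse tube meets only $\mathrm{poly}(D)$ of the balls $B_j$, so the transverse contribution falls to the outer induction at the smaller radius $\rho$. The tangent contribution is where the $k$-broad hypothesis is essential: sectors $\tau$ whose long directions $L(\theta)$ cluster in a $(k-1)$-subspace contribute zero to the broad norm by definition, while the remaining tangent packets must be counted through a polynomial-Wolff-type bound adapted to the variety $Z(P)$.

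The principal obstacle is this tangent subcase, which is exactly where the cone diverges from the paraboloid. The cone is a ruled surface with Gaussian curvature vanishing along $L(\theta)$, so the rulings themselves may be contained in $Z(P)$ and wave packets can stack along them in ways that have no paraboloid analogue; moreover $L(\theta)$ already lies on the cone, so tangency to $Z(P)$ interacts nontrivially with the cone's own geometry. To control this I would exploit the orthogonality of $L(\theta)$ and $M(\theta)$ together with the asymmetric tube dimensions (thickness $1$ in $M(\theta)$ versus $R^{(1+\delta)/2}$ in the other short directions): a local rescaling along the rulings of each sector converts a piece of the cone into a paraboloid-like quadric at a reduced scale, converting the cone tangent count into a paraboloid tangent count at the cost of a tractable rescaling factor. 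After tuning $D$, $\rho$, and $\delta$, the outer induction closes with the claimed $R^\epsilon$ loss at the exponent $\bar p(k,n)$.
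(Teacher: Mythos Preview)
Your outline follows the single-step polynomial partitioning of \cite{Guth1} rather than the iterated scheme of \cite{Guth2}, and that is a genuine gap for the full range $2\le k\le n$. The cellular case closes whenever $p>\frac{2n}{n-1}$, not ``precisely when $p\ge\bar p(k,n)$''; the exponent $\bar p(k,n)$ does not arise there at all. The sharp exponent is dictated by the tangent case, and a single polynomial-Wolff bound on $Z(P)$ does not produce it. What the paper actually does is iterate: in the tangent case one performs a second polynomial partitioning \emph{inside} the $(n-1)$-dimensional variety, obtaining again cellular, transverse, and tangent subcases, and continues until the variety has dimension $k-1$, at which point the broad norm vanishes by Lemma~\ref{base}. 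The cellular subcase on an $m$-dimensional variety needs $p>\frac{2m}{m-1}$, which can exceed $\bar p(k,n)$ for small $m$; to close the induction one therefore proves the strengthened statement Theorem~\ref{theorem8.1}, with the larger exponent $p_{m,k}=\frac{2(m+k)}{m+k-2}$ and a compensating negative power $R^{-e+1/2}$ on the right, and recovers Theorem~\ref{kBroadThm} only at $m=n$.

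Your proposed fix for the cone-specific difficulty---a local rescaling along the rulings to a paraboloid-like surface---is not the mechanism the paper uses, and it is unclear it can be made to work. The actual obstacle is that in the transverse subcase at lower dimension one needs an $L^2$ equidistribution estimate across translates $Z+b$ of the variety, and for the cone this fails in general because the thin wave packets (thickness $1$ in the mini direction) can concentrate. The paper's resolution is the dichotomy of Lemma~\ref{case}: given the tangent plane $V=T_zZ$, either its companion space $V^+$ has orthocomplement transverse to $V$ (case a), in which case equidistribution holds as in Lemma~\ref{equidistribution physical space}), or else $\supp f_B$ lies in $O(1)$ sectors of width $K^{-1}$ (case b), in which case the broad norm vanishes by \eqref{nonessential}. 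One then restricts to the essential part $f_{\mathrm{ess}}$ living on medium tubes that meet case-a) balls and proves equidistribution for it (Lemma~\ref{precise}). This geometric dichotomy, not a rescaling to the paraboloid, is the new ingredient.
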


Theorem \ref{kBroadThm} is a weak version of the $k$-linear cone restriction conjecture, which says that if $U_1,\ldots,U_k\subset 2\bar{B}^{n-1}\setminus B^{n-1}$ are transversal, i.e. $|L(\theta_1)\wedge\ldots\wedge L(\theta_k)|\gtrsim 1$ for any choices of $\theta_j\subset U_j$, and $f_j$ is supported in $U_j$, $1\leq j\leq k$, then
\begin{equation}\label{klinear}
\left\|\prod_{j=1}^k |Ef_j|^{1/k}\right\|_{L^p(B_R)}\lesssim R^\epsilon \prod_{j=1}^k\|f_j\|_{L^2(2\bar{B}^{n-1}\setminus B^{n-1})}^{1/k}.
\end{equation}
This has been proven in \cite{Wolff} and \cite{BCT} in the case $k=2$ and $k=n$ respectively. When $3\leq k\leq n-1$, it is unknown whether the $k$-linear cone restriction holds true. The only progress towards it that the authors are aware of is due to Bejenaru \cite{Bejenaru1, Bejenaru2}, where some sharp (up to the endpoint) $k$-linear restriction estimate was obtained for a class of hypersurfaces with curvature including $(k-1)$-conical surfaces using very different methods. Even though being a weaker result, (the corresponding version of) the $k$-broad estimate has been shown by Guth in \cite{Guth1, Guth2} to be sufficient for obtaining linear restriction estimates for the paraboloid. This follows from an adapted argument of Bourgain and Guth \cite{BG}, where a method converting multilinear restriction estimates into linear restriction estimates is introduced. In this sense, the core power of the $k$-linear restriction can be captured by the $k$-broad estimate, which inspired us to take a similar path in our proof and suggests possible further applications in other problems. 

In the next two sections, we prove Theorem \ref{kBroadThm}. Similarly as for the paraboloid, we apply the method of polynomial partitioning, which exploits the algebraic structure of the broad part of $|Ef|$. We will emphasize the  differences between the cases of the paraboloid and the cone, while only sketch the part of the proof where the argument for the paraboloid in \cite{Guth2} applies equally well in our problem.  

In Section \ref{strategy}, we recall some background of polynomial partitioning, provide an outline of the argument, and identify the main difficulties. Then, in Section \ref{sec: proof}, instead of directly proving Theorem \ref{kBroadThm}, we in fact prove a stronger inductive estimate (Theorem \ref{theorem8.1} below) that involves all intermediate dimensions $1\leq m\leq n$, which in particular recovers Theorem \ref{kBroadThm} at $m=n$. This strengthening is necessary in order for us to tackle the issues that arise over the course of induction and was also the strategy taken in \cite{Guth2}. 

\section{Outline of polynomial partitioning}\label{strategy}
Polynomial partitioning has been a powerful tool widely used in the study of restriction problems. It originated from the work of Guth--Katz \cite{GK} in their resolution of the Erd\"{o}s distinct distance conjecture in discrete geometry, and was introduced to the continuous setting, particularly for the restriction estimates for the paraboloid, by Guth \cite{Guth1, Guth2}. Briefly speaking, it is a strategy of divide and conquer; it begins with identifying a polynomial whose zero set partitions the mass of $\|Ef\|_{BL^p_{k,A}}$ into pieces. It thus suffices to estimate the part of $\|Ef\|_{BL^p_{k,A}}$ restricted in each small piece, and the part of $\|Ef\|_{BL^p_{k,A}}$ that is restricted near the zero set of the polynomial. Both situations turn out to be suitable for performing an induction type argument.

\subsection{Tools from algebraic geometry}

Given a polynomial $P$ on $\mathbb{R}^n$, its zero set is denoted by $Z(P)$. The basic partitioning theorem our argument will rely on is the following.
\begin{theorem}\cite[Theorem 1.4]{Guth1}\label{thm: pp}
Suppose that $W\geq 0$ is a nonzero $L^1$ function on $\mathbb{R}^n$. Then for each $D$ there exists a non-zero polynomial $P$ of degree at most $D$ such that $\mathbb{R}^n\setminus Z(P)$ is a union of $\sim D^n$ disjoint open sets $O_i$, and
\begin{equation}\label{thmpp cells}
\int_{O_i} W = \int_{O_j} W,\quad \forall i,j.
\end{equation}
\end{theorem}

We would want the zero sets of the partitioning polynomials  that appear in our proof to be smooth and regular, so that locally they can be well approximated by their tangent planes. To ensure this, we choose to work with varieties that are \emph{transverse complete intersections}. The following definition is borrowed from \cite[Section 5]{Guth2}.

\begin{definition}
Fix integer $m\in [1,n]$ and let $P_1,\ldots, P_{n-m}$ be polynomials on $\mathbb{R}^n$ whose common zero set is denoted by $Z(P_1,\ldots, P_{n-m})$. The variety $Z(P_1,\ldots, P_{n-m})$ is called a transverse complete intersection if
\[
 \nabla P_1(x)\wedge  \cdots \wedge\nabla P_{n-m}(x)\neq 0,\quad \forall x\in Z(P_1,\ldots, P_{n-m}).
\]
Define the degree of the transverse complete intersection as $\max_{j=1, \dots, n-m} \deg P_j$. 
\end{definition}A transverse complete intersection $Z(P_1,\ldots, P_{n-m})$ is a smooth $m$-dimensional manifold. 

\begin{remark}\label{rmk: pp}
Theorem \ref{thm: pp} does not guarantee that $Z(P)$ is a transverse complete intersection. After a small pertubation and using Sard's theorem, we could make $Z(P)$ a transverse complete intersection while changing ``$=$'' in equation~\ref{thmpp cells} to ``$\sim$''.  We refer the reader to Lemma 5.1 and Theorem 5.5 of \cite{Guth2} for details. 
\end{remark}
The information of $Ef$ is mostly carried by its wave packets. It is thus useful to understand how a wave packet may intersect a variety. 

In our argument, sometimes one needs to control the number of times a wave packet can cross a variety $Z$ transversally, hence the following result becomes helpful.
\begin{lemma}\cite[Lemma 5.7]{Guth2}\label{lem: trans} 
Let $T$ be a cylinder of radius $r$ with central line $\ell$ and suppose that $Z=Z(P_1,\ldots,P_{n-m})\subset \mathbb{R}^n$ is a transverse complete intersection, where the polynomials $P_j$ have degree at most $D$. For any $\alpha>0$, define
\[
Z_{>\alpha}:=\{z\in Z:\, {\rm Angle}(T_z Z, \ell)>\alpha\}.
\]Then $Z_{>\alpha}\cap T$ is contained in a union of $\lesssim D^n$ balls of radius $\lesssim r\alpha^{-1}$.
\end{lemma}When applying the lemma, a typical choice is $r=R^{(1+\delta)/2}$ and $\alpha=R^{-1/2+\delta}$. Note that in the case of the cone, the wave packets are thin tubes which are even smaller than the cylinders $T$ in the lemma above, hence the same result holds true for the wave packets.

\subsection{Polynomial partitioning in $\mathbb{R}^n$}
We now apply the polynomial partitioning theorem to $\mu_{Ef}$, the measure that was defined via the broad norm of $Ef$ after (\ref{kBroadDef}). Let $B_R\subset \mathbb{R}^n$ be the fixed large ball as before. By Theorem \ref{thm: pp} and Remark \ref{rmk: pp}, for a large constant $D\lesssim_{\epsilon, m} 1$ to be determined later, there exists a (non-zero) polynomial of degree at most $D$ such that its zero set $Z$ divides $B_R\setminus Z$ into a disjoint union of $O(D^n)$ parts $O_i$ with comparable measure $\mu_{Ef}(O_i)\sim \frac{1}{D^n} \mu_{Ef}(B_R)$. 

Recall the wave packet decomposition $Ef=\sum_{\theta,v,\ell}Ef_{\theta,v}^\ell$, where each wave packet in the physical space is essentially supported in $T_{\theta,v}^\ell$, a thin tube of length $R$, radius $R^{\frac{1+\delta}{2}}$ and thickness $R^{\delta}$. In the simplified model where each $T_{\theta,v}^\ell$ is reduced to a line segment, $T_{\theta,v}^\ell$ intersects at most $D$ different parts $O_i$, which is much fewer than the total number of $O_i$'s. In other words, the wave packets passing through a fixed $O_{i_0}$ do not interact much with other $O_i$'s, which works in our favor when we do induction. However, unlike a line segment, a tube $T_{\theta, v}^\ell$ might intersect many more $O_i$'s. In order to apply the above heuristic, we need to first thicken $Z$ to a \emph{wall} $W$, which is defined as the $R^{\frac{1+\delta}{2}}$-neighborhood of $Z$. Let $\widetilde{O}_i:=O_i\setminus W$ be a \emph{cell}, then one has the partition
\[
B_R \subset W\sqcup \left(\bigsqcup_{i}\widetilde{O}_i \right),
\]and the fact that each $T_{\theta,v}^\ell$ intersects at most $D$ cells. 

Therefore, one has 
\[
\mu_{Ef}(B_R)= \sum_{i}\mu_{Ef}(\widetilde{O}_i)+\mu_{Ef}(W).
\]We say that we are in the \emph{Cellular} case if the first term dominates the right hand side of the above equality, the \emph{Algebraic} case if the second term dominates.

\subsubsection{Cellular case}\label{cell1}
This case can be treated in the same way as for the paraboloid, based on the fact that each tube intersects at most $O(D)$ cells. In fact, it holds even more easily since tubes in the cone case are thinner. Let $Ef_i = \sum_{T_{\theta,v}^\ell\cap \widetilde{O}_i\neq \emptyset} Ef_{\theta,v}^\ell$, one then has
$$\sum_{i} \|Ef_i\|_{L^2(B_R)}^2 \lesssim D \|Ef\|_{L^2(B_R)}^2.$$
By Plancherel, 
$$\sum_{i} \|f_i\|_{L^2}^2 \lesssim D \|f\|_{L^2}^2.$$
Combined with $\sum_{i} \mu_{Ef}(\widetilde{O}_i) \sim \mu_{Ef}(B_R)$, there exists at least one cell $\widetilde{O}_i$ (in fact true for most of the cells) such that both of the following estimates hold: 
\begin{align*}
\mu_{Ef}(B_R)&\lesssim D^n \mu_{Ef}(\widetilde{O}_i),\\
\|f_i\|_{L^2}^2&\lesssim \frac{1}{D^{n-1}} \|f\|_{L^2}.
\end{align*}
We cover $\widetilde{O}_i$ with finitely many balls of radius $R/2$ and induct on the radius of the ball $B_R$. The induction closes if $p>\frac{2n}{n-1}$. More precisely,
\begin{align*}
\mu_{Ef}(B_R)&\lesssim D^n \mu_{Ef}(\widetilde{O}_i) \lesssim D^n \sum_{B_{R/2}\subset B_R} \mu_{Ef}(\widetilde{O}_i\cap B_{R/2})\\
&\lesssim R^{\epsilon}D^n\|f_i\|_{L^2}^{p} \lesssim R^{\epsilon} D^{n-\frac{(n-1)p}{2}} \|f\|_{L^2}^p.
\end{align*}If $D$ is chosen sufficiently large, the power of $D$ dominates the implicit constant and the induction is closed.

\subsubsection{Algebraic case}

A tube can intersect the wall $W$ in two different ways, either cutting across $W$ or nearly tangent to $Z$. 
\begin{definition}\label{tangent}
Let $Z_0$ be an $m$-dimensional variety in $\mathbb{R}^n$. A tube $T_{\theta,v}^\ell$ is said to be $\gamma$-tangent to $Z_0$ in $B_R$ if 
\[
T_{\theta,v}^\ell\subset N_{\gamma R}(Z_0)\cap 2 B_R
\]and for all $z\in Z_0\cap N_{10\gamma R}(T_{\theta,v}^\ell)\cap 2B_R$ there holds
\[
\text{Angle}(T_z Z_0,L(\theta))\leq\gamma,\qquad \text{where $L(\theta)$ denotes the long direction of $T_{\theta,v}^\ell$.}
\]
\end{definition}
Fix $\delta>0$. (In this outline section, $\delta$ is the same as the one in Section 2 and  is much smaller than $\epsilon$. In later sections,  the $\delta$ in $\mathbb{T}_{\text{tang}}$ and $\mathbb{T}_{\text{trans}}$ will be $\delta_m$, depending on $\dim Z_0 = m$, as in Theorem \ref{theorem8.1}.) If a tube intersects $W$, then we say it crosses $W$ \emph{transversally} if it is not $R^{-1/2+\delta}$-tangent to $Z$. Denote 
\[
\mathbb{T}_{\text{trans}}:=\{(\theta,v,\ell):\, T_{\theta,v}^\ell\,\,\text{crosses}\,\,W\,\,\text{transversally in}\,\, B_R\},
\]
\[
\mathbb{T}_{\text{tang}}:=\{(\theta,v,\ell):\, T_{\theta,v}^\ell\,\,\text{is}\,\, R^{-1/2+\delta}\,\,\text{-tangent to}\,\, Z\,\,\text{in}\,\,B_R\},
\]and let
\[
f_{\text{trans}}:=\sum_{(\theta,v,\ell)\in\mathbb{T}_{\text{trans}}}f_{\theta,v}^\ell,\qquad f_{\text{tang}}:=\sum_{(\theta,v,\ell)\in\mathbb{T}_{\text{tang}}}f_{\theta,v}^\ell.
\]By triangle inequality of the broad norm (Lemma \ref{lem: tri}), there are two different cases to consider depending on which type of wave packets make the most contribution to $\mu_{Ef}(W)$:
\begin{itemize}
\item{\emph{Algebraic transversal}:} if $\mu_{Ef_{\text{trans}}}(W)\gtrsim \mu_{Ef}(B_R)$;
\item{\emph{Algebraic tangential}:} if $\mu_{Ef_{\text{tang}}}(W)\gtrsim \mu_{Ef}(B_R)$.
\end{itemize}

The transversal case can be dealt with by induction. Cover $W$ with balls $\{B_j\}$ of radius $\rho:=R^{1-\delta}$ and notice that $T_{\theta,v}^\ell \in\mathbb{T}_{\text{trans}}$ crosses $W$ transversally in at most $\lesssim D^n$ different $B_{j}$'s according to Lemma \ref{lem: trans} (by taking in the lemma $r=R^{(1+\delta)/2}$ and $\alpha=R^{-1/2+\delta}$). Fix a $B_j$ and let $Ef_j :=\sum_{T_{\theta,v}^\ell\in\mathbb{T}_{\text{trans}}, T_{\theta,v}^\ell\cap B_j\neq \emptyset}Ef_{\theta,v}^\ell$. By inducting on scales, one obtains
\begin{align*}
\mu_{Ef_{\text{trans}}}(W)&\leq \sum_{B_{j}} \mu_{Ef_{\text{trans}}}(B_{j}\cap W) \\
&\leq \sum_{B_j}\mu_{Ef_{j}}(B_j\cap W) + \text{RapDec}(R)\|f\|_{L^2}^p\\
&\lesssim \sum_{B_j} {\rho}^{\epsilon} \|f_j\|_{L^2}^p + \text{RapDec}(R)\|f\|_{L^2}^p\\ 
&\lesssim  \rho^{\epsilon}D^{pn/2}\|f\|_{L^2}^p\lesssim R^{\epsilon} \|f\|_{L^2}^p.
\end{align*}
Since $D\lesssim_{\epsilon, m} 1$, we can choose $R$ sufficiently large  so that $R^{\delta \epsilon} \gg D^{pn/2}$, hence the last inequality holds. Note that this argument is still the same as in the paraboloid problem. 

Things begin to change in the tangential case, where the cone restriction problem becomes different from the paraboloid one. Because of the lack of curvature on the straight lines on the cone, we choose to work with wave packets that are thinner than the ones for the paraboloid, which however results in more wave packets lying inside the $R^{\frac{1+\delta}{2}}$-neighborhood of a variety tangentially. 

The main strategy in this case is to perform another polynomial partitioning inside $W$, look into the cellular, transversal  and tangential cases at the next level, and repeat. At each step, the dimension of the variety (denoted as $Z$ again) that the wave packets are tangent to is reduced by $1$. And the iteration stops when $\dim Z < k$ according to the following lemma.
\begin{lemma}\label{base}
If $Ef$ is $R^{-1/2+\delta}$-tangent to a variety $Z$ of degree $O(1)$ and dimension $k-1$, then
$$\|Ef\|_{BL^p_{k,A}(B_R)} \leq \text{RapDec}(R)\|f\|_{L^2}.$$
\end{lemma}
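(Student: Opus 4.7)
The strategy is to exploit the tangency hypothesis to force all relevant long directions on each ball $B_{K^2}$ into tangent planes of $Z$, then choose the $(k-1)$-subspaces $V_1,\ldots,V_A$ in the definition of $\mu_{Ef}(B_{K^2})$ to cover those tangent planes up to angle $K^{-1}$. Fix $B_{K^2}\subset B_R$ with center $x_0$. A tangent wave packet $Ef_{\theta,v}^\ell$ contributes non-negligibly to $B_{K^2}$ only if $T_{\theta,v}^\ell$ passes through $B_{K^2}$; by the hypothesis, $T_{\theta,v}^\ell\subset N_{R^{1/2+\delta}}(Z)\cap B_R$, and at every $z\in Z$ within distance $R^{1/2+\delta}$ of the tube one has $\text{Angle}(T_z Z, G(\theta))\leq R^{-1/2+\delta}$. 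If $B_{K^2}$ lies outside $N_{2R^{1/2+\delta}}(Z)$ then no such tube reaches $B_{K^2}$ and the contribution is already $\text{RapDec}(R)\|f\|_{L^2}$; otherwise, set $Z_*:=Z\cap B(x_0, CR^{1/2+\delta})$, still an algebraic variety of dimension $k-1$ and degree $O(1)$.

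For the choice of subspaces, the key claim is that there exist $(k-1)$-subspaces $V_1,\ldots,V_A$ of $\mathbb{R}^n$ such that every tangent plane $T_z Z$ at a smooth point $z\in Z_*$ lies within angle $K^{-1}/10$ of some $V_a$, with $A$ depending only on $n$, $k$, $\epsilon$, and the degree of $Z$. This follows the same algebraic covering argument as for the paraboloid in \cite{Guth2}: the Gauss image of $Z_*$ is contained in a bounded-degree subvariety of the Grassmannian and is therefore coverable by a controlled number of tangent-plane representatives; the resulting constant is absorbed into the broad-norm parameter $A$ by enlarging it if necessary.

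With this choice, I claim $\mu_{Ef}(B_{K^2}) = \text{RapDec}(R)\|f\|_{L^2}^p$. Consider any $\tau$ with $\text{Angle}(G(\tau),V_a) > K^{-1}$ for all $a$, and suppose for contradiction that some tangent wave packet $Ef_{\theta,v}^\ell$ with $\theta\subset\tau$ passes through $B_{K^2}$. Then $G(\theta)$ is within angle $R^{-1/2+\delta}$ of some $T_z Z$, $z\in Z_*$, which is within $K^{-1}/10$ of some $V_a$. Hence $\text{Angle}(G(\theta), V_a) \leq K^{-1}/10 + R^{-1/2+\delta} < K^{-1}/5$ for $R$ large. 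Since $L(\theta) \in G(\tau)$, the minimum defining the angle gives $\text{Angle}(G(\tau), V_a) \leq \text{Angle}(L(\theta), V_a) < K^{-1}$, contradicting the assumption. Thus no tangent wave packet contributes to $|Ef_\tau|^p$ on $B_{K^2}$, and since $Ef$ consists entirely of tangent wave packets, $\int_{B_{K^2}}|Ef_\tau|^p = \text{RapDec}(R)\|f\|_{L^2}^p$. Summing over the $\lesssim (R/K^2)^n$ balls $B_{K^2}\subset B_R$ yields the conclusion.

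The main technical obstacle is the algebraic covering claim above: producing $A$ independent of $K$ so that the tangent planes of $Z_*$ are covered to resolution $K^{-1}$. This is nontrivial because the Gauss image can have positive dimension in the Grassmannian, but the bounded-degree algebraic structure provides enough rigidity. The argument parallels the analogous step in Guth's paraboloid treatment and ultimately rests on quantitative algebraic bounds for tangent fields of bounded-degree varieties; the curvature considerations that distinguish the cone from the paraboloid do not enter here, since $Z$ is an auxiliary variety from the partitioning rather than the surface on which wave packets live.
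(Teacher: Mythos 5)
There is a genuine gap. Your ``key claim'' --- that the tangent planes of $Z_*$ can be covered up to angle $K^{-1}/10$ by $A$ subspaces with $A$ independent of $K$ --- is false, and it cannot be repaired by ``enlarging $A$ if necessary,'' because in Theorem~\ref{kBroadThm} the parameter $A$ is fixed before $K$ is. A bounded-degree variety of dimension $k-1$ in a ball of radius $R^{1/2+\delta}$ has a Gauss image that is typically a positive-dimensional subset of the Grassmannian (e.g.\ take $Z$ to be a degree-$2$ hypersurface inside a $k$-plane with curvature scale $\sim 1$); covering a positive-dimensional set at resolution $K^{-1}$ needs $\gtrsim K$ subspaces, which diverges. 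The algebraic structure bounds the \emph{degree} of the Gauss image, not its diameter in the Grassmannian, so it does not help.

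The paper's proof avoids this entirely by using the full strength of the tangency hypothesis, which you only partially exploited. Definition~\ref{tangent} says that for a tangent tube $T_{\theta,v}^\ell$, the inequality $\text{Angle}(T_z Z, G(\theta))\leq R^{-1/2+\delta}$ holds for \emph{every} $z\in Z\cap N_{R^{1/2+\delta}}(T_{\theta,v}^\ell)$, not merely for one. So fix a ball $B$ of radius $R^{(1+\delta)/2}$ inside $N_{R^{(1+\delta)/2}}(Z)$ and pick any single point $x\in B\cap Z$. Every tangent tube $T_{\theta,v}^\ell$ that meets $B$ satisfies $\text{dist}(x,T_{\theta,v}^\ell)\lesssim R^{1/2+\delta}$, so the tangency condition (applied at $z=x$, uniformly in $\theta$) gives $\text{Angle}(T_x Z, G(\theta))\lesssim R^{-1/2+\delta}$ for \emph{all} such tubes simultaneously. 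One $(k-1)$-plane $V_1 = T_x Z$ therefore suffices: for every $B_{K^2}\subset B$, any cap $\tau$ with $\text{Angle}(G(\tau),V_1)>K^{-1}$ receives no tangent wave packet through $B_{K^2}$ (since $R^{-1/2+\delta}\ll K^{-1}$), so $\mu_{Ef}(B_{K^2})=\text{RapDec}(R)\|f\|_{L^2}^p$. Summing over $B_{K^2}\subset B$ and then over a cover of $N_{R^{(1+\delta)/2}}(Z)\cap B_R$ by such $B$, and noting (as you correctly did) that balls outside this neighborhood contribute only rapidly decaying terms, finishes the proof --- with no covering argument and with $A=1$ already enough. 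Your contradiction at the end is correct, but it only works because one subspace does the job; the moment you let $z$ vary over $Z_*$ you created a problem that isn't there.
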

\begin{proof}
Fix any ball $B$ of radius $R^{\frac{1+\delta}{2}}$ inside the $R^{\frac{1+\delta}{2}}$-neighborhood of $Z$, for any $x\in B\cap Z$ and any $T_{\theta,v}^\ell \cap B \neq \emptyset$, by the assumption the long direction of $T_{\theta,v}^\ell$ lies 
inside the $R^{-1/2+\delta}$-neighborhood of the tangent space $T_x Z$. Since the dimension of $T_x Z$ is $k-1$, by the definition of the $k$--broad norm, there follows
$$\|Ef\|_{BL^p_{k,A}(B)} \lesssim \text{RapDec}(R) \|f\|_{L^2}. $$
\end{proof}

Guth \cite{Guth2} applied this strategy for the paraboloid. The key idea is that if $Ef$ is tangential to a $m$-dimensional variety $Z$, then  one can essentially treat $Z$ as $\mathbb{R}^m$ and make use of a so-called \emph{equidistribution} property. Morally speaking, the \emph{equidistribution} property says that $|Ef|$ is roughly a constant function locally along the normal direction of $Z$. This, however, is not true for the cone. The main ingredient in our proof is to establish this equidistribution property after removing some negligible part of $Ef$. 

\section{Main inductive argument}\label{sec: proof}
In this section, we prove the broad estimate (Theorem \ref{kBroadThm}), which will be a consequence of a more general result (Theorem \ref{theorem8.1} below). As mentioned at the end of the previous section, we will apply polynomial partitioning iteratively on a sequence of sub-varieties in $\mathbb{R}^n$ of various dimensions. 

To begin with, we discuss how polynomial partitioning, introduced in the previous section on $\mathbb{R}^n$, can be extended to partition a general sub-variety in $\mathbb{R}^n$. 

\begin{theorem}\cite{Guth2}\label{thm: ppsub}
	Fix $r\gg1$, $d\in \mathbb{N}$ and suppose $F \in L^1(\mathbb{R}^n)$ is non-negative and supported on $B_r\cap N_{r^{1/2+\delta}}\mathbf{Z}$ for some $0<\delta\ll 1$, where $\mathbf{Z}$ is an $m$--dimensional transverse complete intersection of degree at most $d\lesssim_{\epsilon,m} 1$. Then, there exists $D=D(\epsilon, d)$ with $d \lesssim D^{\delta^2} \lesssim_{\epsilon, m} 1$ such that at least one of the following cases holds:
	\begin{enumerate}
		\item \emph{(Cellular case)} There exists a polynomial $P$: $\mathbb{R}^n\rightarrow \mathbb{R}$ of degree $D$ satisfying the following properties. There exist $\sim D^m$ cells $O\subset \mathbf{Z}\setminus N_{r^{1/2+\delta}}Z(P)$ such that $O\subset B_{r/2}$ and 
		$$\int_O F \sim D^{-m} \int_{\mathbb{R}^n} F \qquad \text{~~ for~all~} O.$$
		Furthermore, each tube of length $r$, radius $r^{1/2+\delta}$ intersects at most $O(D)$ cells. 
		\item \emph{(Algebraic case)} There exists an $(m-1)$--dimensional transverse complete intersection $\mathbf{Y}$ of degree at most $O(D)$ such that 
		$$\int_{B_r\cap N_{r^{1/2+\delta}}\mathbf{Z}} F \lesssim \int_{B_r \cap N_{r^{1/2+\delta}}\mathbf{Y}} F. $$
		\end{enumerate}
\end{theorem}

Theorem~\ref{thm: ppsub} is  proved in Section 8.1 of \cite{Guth2} while not explicitly stated. We borrow the exact statement of Theorem~\ref{thm: ppsub} from \cite[Theorem 6.3]{HR}. We briefly sketch its proof here. 

One first decomposes $\mathbf{Z}$ into $O(1)$ pieces $\mathbf{Z}_j$  such that the tangent spaces at points in each piece $\mathbf{Z}_j$ form an angle of at most $1/100$ with a certain $m$-dimensional subspace $V_j$.  There exists a $\mathbf{Z}_j$ such that 
$$\int_{B_r\cap N_{r^{1/2+\delta}}\mathbf{Z}} F \lesssim \int_{B_r\cap N_{r^{1/2+\delta}}\mathbf{Z}_j} F.$$

Next, one looks at only $\mathbf{Z}_j$ and define the orthogonal projection $\pi:\mathbb{R}^n \rightarrow V_j$. Applying Theorem~\ref{thm: pp} with the function $W(y)=\int_{\pi^{-1}(y)} F$, one can partition $V_j =\mathbb{R}^m$ using a polynomial  $P_{V_j}$ of degree $D=D(\epsilon, d)$. 
 Let $P$ be the polynomial on $\mathbb{R}^n$ defined as $P(x)= P_{V_j}(\pi(x))$. We then apply the polynomial partitioning argument presented in the last section.  If it is the cellular case for $W$ defined on $V_j$ with $P_{V_j}$, then we would obtain the cellular case in Theorem~\ref{thm: ppsub} with polynomial $P$.  Otherwise it is the algebraic case for $W$, so $\sim 1$ fraction of $F$ is concentrated on the $r^{1/2+\delta}$--neighborhood of $\mathbf{Z}\cap Z(P)$. We then apply Remark~\ref{rmk: pp} to fine tune $\mathbf{Z}\cap Z(P)$ into a transverse complete intersection $\mathbf{Y}$ after a small perturbation.

 \vspace{5pt}

Instead of proving Theorem \ref{kBroadThm} directly, we prove the following stronger estimate, which is similar to \cite[Theorem 8.1]{Guth2} and is more suitable for induction. 
\begin{definition}\label{def: tang}
 Let $S$ be a transverse complete intersection of degree $D_1\sim O(1)$ and of dimension $m<n$ inside $B_R$ ($S$ is understood as $S\cap B_R$ if it is not completely contained in $B_R$). Define
\begin{equation}\label{tangentset}
\mathbb{T}_S:=\left\{(\theta,v,\ell):\, T_{\theta,v}^\ell\,\,\text{is}\,\,R^{-1/2+\delta_m}\,\text{-tangent to S in}\,\,B_R\right\},
\end{equation}
where $\delta_m\geq0$ is a fixed small parameter for each dimension $m$, which is chosen later. 
\end{definition}
\begin{theorem}\label{theorem8.1}
For $\epsilon>0$, there exist small parameters $0<\delta\ll \delta_{n-1}\ll \cdots\ll \delta_1\ll \delta_0\ll \epsilon$ and large parameter $\bar{A}$ such that the following holds. Let $1\leq m\leq n$ and $\mathbf{Z}=Z(P_1,\ldots,P_{n-m})$ be a transverse complete intersection with $\text{Deg}\,P_i\leq D_{\mathbf{Z}} \lesssim_{\epsilon,m} 1 $. Suppose that $f$ is concentrated on wave packets from $\mathbb{T}_{\mathbf{Z}}$ as in Definition~\ref{def: tang}. Then for any $2\leq k\leq n$,  $1\leq A\leq\bar{A}$ and radius $R\geq 1$,
\[
\|Ef\|_{BL^p_{k,A}(B_R)}\lesssim_{K,\epsilon,m,D_{\mathbf{Z}}} R^\epsilon R^{\delta(\log\bar{A}-\log A)}R^{-e+\frac{1}{2}}\|f\|_{L^2}
\]whenever $2\leq p\leq p(m,k):=2\cdot\frac{m+k}{m+k-2}$ where $e:=\frac{1}{2}(\frac{1}{2}-\frac{1}{p})(n+k)$.
\end{theorem}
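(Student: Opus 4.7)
The plan is to establish Theorem~\ref{theorem8.1} by a joint induction on the radius $R$, the dimension $m$, and the parameter $A$, closely paralleling Section~8 of \cite{Guth2}. Fix $\epsilon > 0$ and choose the hierarchy $\eta \ll \delta_{n-1} \ll \cdots \ll \delta_0 \ll \epsilon$ together with the upper bound $\bar A = \bar A(\epsilon)$ at the outset; the polynomial degrees $D$ used below are chosen after $\epsilon$ but before $\delta_m$ and $\eta$, so every $D^{O(1)}$ factor is absorbed into $R^{\delta_m \epsilon}$ once $R$ is large. The base cases are $R \lesssim 1$ (trivial), $m < k$ (handled by Lemma~\ref{base}, which yields the stronger $\mathrm{RapDec}(R)$ bound), and $p = 2$ (where the exponent $-e + \tfrac12 = \tfrac12$ matches the elementary bound $\|Ef\|_{L^2(B_R)} \lesssim R^{1/2}\|f\|_2$ obtained from almost-orthogonality of the wave packets).

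For the inductive step, I first decompose $Z$ into $O_{D_Z}(1)$ essentially flat pieces, push $\mu_{Ef}$ onto each piece, and apply the polynomial ham-sandwich theorem to produce a polynomial $P$ of degree $D$ whose zero set subdivides each piece into $O(D^m)$ cells $\widetilde O_j$ of equal $\mu_{Ef}$-mass. Let the wall $W$ be the $R^{(1+\delta_m)/2}$-neighbourhood of $Z(P)$ and split the wave packets into the transverse and tangent families relative to $Z(P)$ as in Definition~\ref{tangent}. The triangle inequality for the broad norm reduces to three subcases. In the \emph{cellular} case, a tubular B\'ezout argument shows that each tube meets $O(D)$ cells, so some cell $\widetilde O_{j_0}$ accounts for a $D^{-m}$-fraction of the broad mass and the corresponding wave-packet truncation satisfies $\|f_{j_0}\|_2^2 \lesssim D^{-(m-1)}\|f\|_2^2$; invoking the inductive hypothesis at radius $R/2$ with the same $m$ and $A$ produces a factor $D^{m-(m-1)p/2}$, which is $\leq 1$ at $p = p_{m,k}$ for $m > k$ and, together with the $2^{-p\epsilon}$ saving from the recursion, closes the induction. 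In the \emph{transversal} case, I cover $W$ by balls of radius $\rho = R^{1-\delta_m}$ and use that each transverse tube crosses only $O(D)$ such balls; applying the inductive hypothesis at scale $\rho$ yields the factor $\rho^\epsilon = R^\epsilon R^{-\delta_m\epsilon}$, which beats the $D^{p/2}$ tube-multiplicity loss as soon as $R^{\delta_m\epsilon} \gg D^{p/2}$.

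The \emph{tangential} case is the heart of the argument and the main obstacle. Here every contributing wave packet is $R^{-1/2+\delta_m}$-tangent to the $(m-1)$-dimensional transverse complete intersection $Z \cap Z(P)$ (further split into $O_{D,D_Z}(1)$ pieces if necessary). One cannot apply the inductive hypothesis at dimension $m-1$ directly because there the tangency parameter must be $\delta_{m-1} \gg \delta_m$; as in \cite{Guth2}, the fix is to cover $W$ by balls of radius $\rho = R^{1-O(\delta_m)}$ and regroup the wave packets on each $\rho$-ball into coarser packets at scale $\rho$ that are $\rho^{-1/2+\delta_{m-1}}$-tangent to the lower-dimensional variety, to which the inductive hypothesis at dimension $m-1$ applies. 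Summing over the cover using almost-orthogonality of wave packets and the H\"older inequality for $BL^p_{k,A}$, the gap between the exponents $-e_{m-1}(p)+\tfrac12$ and $-e_m(p)+\tfrac12$ exactly compensates the regrouping loss for $p \leq p_{m,k}$. The multiplicative factor $R^{\eta(\log\bar A - \log A)}$ tracks the deterioration of the bound as $A$ is decreased at each tangential iteration: one invokes the inductive hypothesis at $A' = A/c$ for some absolute constant $c > 1$ and pays $R^{\eta \log c}$, and the induction on $A$ terminates after $O(\log\bar A)$ steps, contributing a total loss of $R^{\eta \log\bar A} \leq R^{\epsilon/2}$ since $\eta \ll \epsilon/\log\bar A$. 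With the endpoint $p = p_{m,k}$ verified, the full range $2 \leq p \leq p_{m,k}$ follows by interpolation with the $p = 2$ endpoint via the triangle and H\"older inequalities for $BL^p_{k,A}$ noted after Definition~\ref{kBroadDef}, which provide log-convexity of the broad norm in $p$.
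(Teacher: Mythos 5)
Your overall architecture is right: joint induction on $(R, m, A)$, three subcases after polynomial partitioning, the $p=2$ and $m<k$ base cases, the cellular and transversal estimates. But the paragraph treating the tangential case conceals precisely the step that the paper identifies as its main novelty, and as written it does not close.

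First, a small but telling error: you write that "the gap between the exponents $-e_{m-1}(p)+\tfrac12$ and $-e_m(p)+\tfrac12$ exactly compensates the regrouping loss." But $e = \tfrac12\bigl(\tfrac12-\tfrac1p\bigr)(n+k)$ has no $m$-dependence at all; only the \emph{range} $2\le p\le p_{m,k}$ changes with $m$. When you pass to $\rho$-balls and invoke the hypothesis at dimension $m-1$, you get $\rho^{-e+1/2}$ with the same $e$, and since $-e+\tfrac12\le 0$ throughout $p_{n,k}\le p\le p_{m,k}$ this is \emph{worse} than the target $R^{-e+1/2}$. There is no automatic cancellation from "regrouping" or from the $BL^p$ H\"older inequality; an extra source of gain is needed, and that gain is the transverse equidistribution estimate. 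You never mention it.

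Second, and more importantly, obtaining equidistribution for the cone is genuinely harder than for the paraboloid, and this is where the paper does new work that your "as in \cite{Guth2}" step papers over. Because cone wave packets have a mini direction of thickness $1$ and a distinguished flat direction, the paraboloid argument for Lemma 6.5 of \cite{Guth2} does not transfer: one cannot choose the transversal shift vector $b$ freely ball-by-ball, and equidistribution can even fail outright when the tangent plane $V$ of $Z$ is nearly parallel to a null direction of the cone. The paper handles this through the dichotomy of Lemma~\ref{case}: either $V$ is quantitatively transverse to the relevant cap directions (case a), in which case the physical-space equidistribution Lemma~\ref{equidistribution physical space} does hold, or the wave packets passing through $B$ are concentrated in $O(1)$ sectors of angular width $K^{-1}$ (case b), in which case the $k$-broad norm vanishes on $B$ by definition. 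One then decomposes $f$ into an essential part supported on medium tubes meeting case-(a) balls and a nonessential remainder killed by the broad norm, and proves equidistribution only for $f_{\text{ess}}$ (Lemma~\ref{precise}) after establishing the medium-tube/small-tube/large-tube compatibility (Lemma~\ref{neighborhood}) needed to transfer $L^2$ mass between scales via Lemma~\ref{orthog}. Without this case analysis and the restriction to $f_{\text{ess}}$, the tangential step is not justified for the cone; your proposal in effect assumes the paraboloid mechanism carries over verbatim, which is exactly what the paper explains does not happen.
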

Observe that when $m=n$ and $\mathbf{Z}=\mathbb{R}^n$, by taking $A=\bar{A}$ and $p=p(n,k)$ one computes $-e+1/2=0$, which implies Theorem \ref{kBroadThm}. We also remark that for $p=2$, Theorem \ref{theorem8.1} follows quickly from a similar $L^2$ estimate as in Lemma 3.2 of \cite{Guth2}:
\begin{equation}\label{L2}
\|Ef\|_{BL^2_{k,A}(B_R)}^2\lesssim R\|f\|_{L^2}^2.
\end{equation}

By interpolation and H\"older's inequality of the broad norm (Lemma \ref{lem: Holder}), Theorem \ref{theorem8.1} will thus follow from the endpoint case $p=p(m,k)$, which we prove by induction next. Due to technical reasons, we choose to present the proof by induction rather than iteration similarly as in \cite{Guth2}. This is also why we need to prove a stronger result that concerns algebraic varieties of all intermediate dimensions $m$, which is more suitable for induction.

The rest of the section is devoted to the proof of Theorem \ref{theorem8.1}.

We will repeatedly use the strategy introduced in Section \ref{strategy}. More precisely, the plan is to induct on the dimension $m$, the radius $R$, and the parameter $A$. 

It is easy to see that the base case $m=k-1$ (for all $R$ and $A$) follows from Lemma \ref{base}. If $A=1$, then by choosing $\bar{A}$ large enough, the desired estimate follows from the trivial $L^1\to L^\infty$ estimate of the extension operator $E$. If $R$ is small, then the desired estimate can be deduced by choosing the implicit constant sufficiently large. Now suppose the desired estimate holds true if we decrease the dimension $m$, the radius $R$, or $A$. 

Recall that $\mathbf{Z}$ is a transverse complete intersection of dimension $m$. We first apply  Theorem~\ref{thm: ppsub}, then it suffices to discuss the cellular case and the algebraic case one by one.
\subsection{The cellular case}\label{secsec: cell}



Let $O$ be a cell, and define  $f_O = \sum_{(\theta,v,\ell)\in \mathbb{T}_O}f^\ell_{\theta,v}$, where
$$
\mathbb{T}_O:=\big\{(\theta,v,\ell):T_{\theta,v}^\ell \cap O\neq \emptyset\big\}.
$$
Since we are in the cellular case,  for $\sim D^m$ cells $O$, 
$$
\|Ef\|^{p}_{BL^{p}_{k,A} (B_R)} \lesssim D^m \|Ef\|^{p}_{BL^{p}_{k,A} (O)} \lesssim D^m \|Ef_O\|^{p}_{BL^{p}_{k,A} (B_R)}.
$$
Moreover, by orthogonality and the geometric observation that each $(\theta,v,\ell)$ belongs to $\lesssim D$ collections $\mathbb{T}_O$, 
$$
\sum_i \|f_i\|_{L^2}^2 \lesssim D \|f\|_{L^2}^2.
$$
Therefore, by the same argument as in Subsection \ref{cell1}, the induction for the non-algebraic case closes since $p=p(m,k)>\frac{2m}{m-1}$. 

\begin{remark}
In fact, when proving the case $m=k$, one needs to first prove the slightly larger endpoint case $p=p(m,m)+\delta$ and then interpolate. This is to make sure that the induction on scales argument treating the cellular case described above can close. More precisely, this slight change will produce a gain of $D^{-\delta}$ at the end of the cellular case inductive argument, for some $D=D(\epsilon, D_{\mathbf{Z}})$. By choosing $D$ sufficiently large, one then closes the induction. We omit the separate discussion of this special case as the issue can be handled in the exact same way as in \cite[Section 8.1, bottom of page 38]{Guth2}.
\end{remark}

\subsection{The algebraic case}

Fix $p\in [2, p(m,k)]$. Recall that in the algebraic case, there exists a transverse complete intersection $\mathbf{Y}$ of dimension $m-1$, defined using polynomials of degree $\leq D(\epsilon,D_{\mathbf{Z}})\lesssim_{\epsilon,m} 1$ such that 
\[
\mu_{Ef}(N_{R^{1/2+\delta_m}}(\mathbf{Y})\cap B_R)\gtrsim \mu_{Ef}(B_R).
\]

One first covers $B_R$  by smaller balls $B_j$ of radius $\rho$, where $\rho^{1/2+\delta_{m-1}}=R^{1/2+\delta_m}$. One has
\[
\|Ef\|^{p}_{BL^{p}_{k,A}(B_R)}\lesssim\sum_{j}\|Ef_j\|^{p}_{BL^{p}_{k,A}(B_j)}+{\rm RapDec}(R)\|f\|_{L^2}^{p},
\]where
\[
f_j:=\sum_{(\theta,v,\ell)\in\mathbb{T}_j} f_{\theta,v}^\ell,\quad \mathbb{T}_j:=\{(\theta,v,\ell):\,T_{\theta,v}^\ell\cap N_{R^{1/2+\delta_m}}(\mathbf{Y})\cap B_j\neq \emptyset\}.
\]

%

For each group $\mathbb{T}_j$, we further subdivide it into tubes that are tangent to $\mathbf{Y}$ and ones that are transverse to $\mathbf{Y}$. 

We say that $T_{\theta,v}^\ell \in \mathbb{T}_j$ is \emph{tangent} to $\mathbf{Y}$ in $B_j$ if
\begin{equation}\label{eqn:verify1}
T_{\theta,v}^\ell\cap B_j\subset N_{R^{1/2+\delta_m}}(\mathbf{Y}) \cap B_j =
N_{\rho^{1/2+\delta_{m-1}}}(\mathbf{Y}) \cap B_j
\end{equation}and for any non-singular point $y\in \mathbf{Y} \cap B_j\cap N_{10R^{1/2+\delta_m}}T_{\theta,v}^\ell$,
\begin{equation}\label{eqn:verify2}
 \text{Angle}(L(\theta),T_y \mathbf{Y})\leq \rho^{-1/2+\delta_{m-1}}.
\end{equation}

The groups of tangential and transversal wave packets are denoted by 
\[
\mathbb{T}_{j,{\rm tang}}:=\{(\theta,v,\ell)\in\mathbb{T}_j:\, T_{\theta,v,\ell} \text{ is tangent to } \mathbf{Y} \text{ in } B_j\},\quad \mathbb{T}_{j,{\rm trans}}:=\mathbb{T}_j\setminus \mathbb{T}_{j,{\rm tang}},
\]and let
\begin{equation}\label{eqn:trans}
f_{j,{\rm tang}}=\sum_{(\theta,v,\ell)\in\mathbb{T}_{j,{\rm tang}}} f_{\theta,v}^\ell,\quad f_{j, {\rm trans}}=\sum_{(\theta,v,\ell)\in\mathbb{T}_{j,{\rm trans}}} f_{\theta,v}^\ell.
\end{equation}Then by the triangle inequality (Lemma \ref{lem: tri}),
\[
\sum_{j}\|Ef_j\|^{p}_{BL^{p}_{k,A}(B_j)}\lesssim \sum_j\|Ef_{j,{\rm tang}}\|^{p}_{BL^{p}_{k,A/2}(B_j)}+\sum_j\|Ef_{j,{\rm trans}}\|^{p}_{BL^{p}_{k,A/2}(B_j)}.
\]In the following, we will estimate the contribution from the tangential wave packets and the transversal wave packets separately by induction on the dimension $m$, parameter $A$ and radius $R$. 

Before diving into the study of the two cases, we first discuss a common ingredient in their proofs: the relation between the wave packet decomposition of $Ef_j$ at the large scale $R$ and its wave packet decomposition inside ball $B_j$ at the small scale $\rho$. Understanding this relation is one of the main novelties of the article. Note that even though a similar discussion for the paraboloid can be found in \cite[Section 7]{Guth2}, many results there (for instance see Remark \ref{rmk: small wp} below) do not extend to the cone case, as the wave packet decomposition for the cone and the paraboloid are different.

\subsubsection{Adjusting the wave packet decomposition to a smaller ball}\label{subsec: largesmall}
Fix a small ball $B(y, \rho)\subset B(0,R)$ with $R^{1/2+\delta}<\rho <R$. Let $X=x-y$ and define 
$$\psi_y(\xi)= y_1\xi_1+\cdots y_{n-1}\xi_{n-1}+y_n |\xi|.$$
We also define the map $\widetilde{f}(\xi)= e^{i\psi_y(\xi)}f(\xi)$. 
Then $Ef(x)=E\widetilde{f}(X)$. 

Consider wave packet decomposition of $\widetilde{f}$ at scale $\rho$. In other words, for $E\widetilde{f}(X)$ defined on $B(0,\rho)$, write
$$\widetilde{f}= \sum_{\zeta, \, w, \,L} \widetilde{f}^{L}_{\zeta,w},$$
where each $\zeta$ is a small sector of $2\bar{B}^{n-1}\setminus B^{n-1}$ of radius $\rho^{-1/2}$, $w\in \rho^{1/2+\delta}\mathbb{Z}^{n-1}$, and $1\leq L \leq \rho^{1/2}$. The $(n-1)$-dimensional Fourier transform of each $\widetilde{f}^{L}_{\zeta, w}$ is essentially supported inside a thin plate $P^{L}_{\zeta,w}$ of side length $\rho^{1/2+\delta}$ and thickness $\rho^{\delta}$ in the ball of radius $\rho^{1/2+\delta}$ centered at $w$. The small wave packet $E\widetilde{f}^{L}_{\zeta,w}$ is essentially supported in a thin tube $T^{L}_{\zeta,w}$ of length $\rho$ in the long direction $L(\zeta)$, thickness $\rho^{\delta}$ in the mini direction $M(\zeta)$, and width $\rho^{1/2+\delta}$ in the rest of the directions. In the $X$ coordinate, the tube is contained in $B(0,\rho)$, while in the $x$ coordinates, the tube is translated to be in $B(y, \rho)$.

We would like to study how the original wave packet decomposition $f=\sum_{\ell, \theta,v} f^{\ell}_{\theta,v}$ is related to the new wave packet decomposition $\widetilde{f}=\sum_{L, \zeta,w} \widetilde{f}^{L}_{\zeta, w}$. 

For any $(\theta,v,\ell)$ such that $T_{\theta,v}^\ell\cap B(y,\rho)\neq \emptyset$, define $$\widetilde{\mathbb{T}}_{\theta,v, \ell} =\left\{(\zeta, w, L): \, {\rm Dist}(\theta, \zeta)\lesssim \rho^{-1/2}, \, {\rm Dist}(P^{L}_{\zeta, w}, \, P^{\ell}_{\theta,v}+P^{0}_{\theta} -\partial_{\xi} \psi_y (\xi_{\theta}))\lesssim R^{\delta}\right\}.$$
Recall that $\xi_{\theta}$ is the point on the central line of $\theta$ with $|\xi_{\theta}|=1$;  $P^0_{\theta}$ is the thin plate centered at the origin in $\mathbb{R}^{n-1}\times \{0\}$ of side length $R^{1/2+\delta}$, thickness $R^{\delta}$, with normal direction $\xi_{\theta}$; and $P^{\ell}_{\theta,v}$ is the essential support of $(f^{\ell}_{\theta,v})^{\wedge}$. 

\begin{lemma}\label{lm: small wp}
	$(f^{\ell}_{\theta,v})^{\sim} $ is concentrated on small wave packets from $\widetilde{\mathbb{T}}_{\theta,v, \ell}$. In other words,
\[
(f^{\ell}_{\theta,v})^{\sim}=\sum_{(\zeta,w,L)\in \widetilde{\mathbb{T}}_{\theta,v, \ell}} (g_{\zeta,w}^L)^\sim +{\rm RapDec}(R)\|f\|_{L^2},	
\]where $g=f^\ell_{\theta,v}$. 
\end{lemma}
\begin{proof}
	First, since $(f^{\ell}_{\theta,v})^{\sim} = e^{i\psi_{y}(\xi)} f^{\ell}_{\theta,v}$ is essentially supported on $\theta$, it is obviously concentrated in small wave packets $(\zeta, w,L)$ satisfying ${\rm Dist}(\theta, \zeta)\lesssim \rho^{-1/2}$. 
	
	Let $\phi_{\theta}$ be a bump function that is $1$ on $\theta$ and $0$ outside $2\theta$.
	Then the Fourier transform of $(f^{\ell}_{\theta,v})^{\sim} = e^{i\psi_{y}(\xi)} f^{\ell}_{\theta,v} \phi_{\theta}$ is 
	$$\big(e^{i\psi_{y}(\xi) }f^{\ell}_{\theta,v} \phi_{\theta}\big)^{\wedge} = (\phi_{\theta} e^{i\psi_{y}(\xi)})^{\wedge} \ast (f^{\ell}_{\theta,v})^{\wedge}.$$
	
	In the following, we apply stationary phase to show that $(\phi_{\theta} e^{i\psi_{y}(\xi)})^{\wedge}$ is rapidly decaying outside $-\partial_{\xi} \psi_{y}(\xi_{\theta})+P^{0}_{\theta}$. Then, it will follow that the Fourier transform of $(f^{\ell}_{\theta,v})^{\sim}$ is essentially supported in $P^{\ell}_{\theta,v} -\partial_{\xi} \psi_{y}(\xi_{\theta})+P^{0}_{\theta}$. Hence, the second distance condition in the definition of $\widetilde{\mathbb{T}}_{\theta,v, \ell}$ also holds true, which will complete the proof. 

To show that $(\phi_{\theta} e^{i\psi_{y}(\xi)})^{\wedge}$ rapidly decays outside $-\partial_{\xi} \psi_{y}(\xi_{\theta})+P^{0}_{\theta}$, we first Taylor expand  the phase function $$\psi_y(\xi) = \psi_y(\xi_{\theta})+ \partial_{\xi}\psi_{y}(\xi_{\theta})\cdot (\xi-\xi_{\theta}) + \psi_{y,{\rm tail}}(\xi).$$ Note that we can ignore the constant terms $\psi_y(\xi_{\theta})$ and $-\partial_{\xi}\psi_y(\xi_{\theta})\cdot \xi_{\theta}$. Let $\Phi_{\theta} = e^{i\psi_{y,{\rm tail}} }\phi_{\theta}$, we have 
$$ (\phi_{\theta} e^{i\psi_y(\xi)})^{\wedge}(x) =  e^{i \psi_y(\xi_{\theta}) - i \partial_{\xi}\psi_y(\xi_{\theta})\cdot \xi_{\theta}}\int e^{i\xi\cdot (x+\partial_\xi\psi_y(\xi_{\theta}))} \Phi_{\theta}(\xi) d\xi. $$ It thus remains to show that $\widehat{\Phi}_{\theta}$ is essentially supported on $P^0_{\theta}$. 

Up to a rotation, we might assume that $\xi_{\theta}=(0, \dots, 0, 1)$. Consider the change of variable $A: (\xi_1,\dots, \xi_{n-1})\mapsto (R^{-1/2}\xi_1, \dots, R^{-1/2} \xi_{n-2}, \xi_{n-1})$. Then $\phi_{\theta}(A\cdot)$ is supported on the unit ball and
\begin{align*}
\widehat{\Phi}_{\theta}(A^{-1}x)&=\int  e^{i  \xi \cdot x +i\psi_{y,{\rm tail}}(A\xi)} \phi_{\theta}(A\xi) \,dA\xi. 
\end{align*}By definition, we have $\partial_{\xi} (\psi_{y,{\rm tail}}(A\xi))= A[\partial_{\xi}\psi_{y}(A\xi) - \partial_{\xi}\psi_{y}(A\xi_{\theta})]$. In particular, this implies for all $\xi$ in the unit ball that
\[
|\partial_{\xi} (\psi_{y,{\rm tail}} (A\xi))| = y_n \left|\left(\frac{R^{-1}\xi_1}{|A\xi|} , \dots, \frac{R^{-1}\xi_{n-2}}{|A\xi|}, \frac{\xi_{n-1}}{|A\xi|}-1\right)\right|\lesssim 1.
\]Indeed, since $y_n\leq R$ and $|A\xi|\approx 1$, the first $n-2$ coordinates are bounded. The last coordinate is also bounded because $ \xi_{n-1}-|A\xi|=\xi_{n-1}-\sqrt{R^{-1}\xi_1^2+\cdots R^{-1}\xi_{n-2}^2 +\xi_{n-1}^2} \lesssim  R^{-1} |\xi|^2$. 

Therefore, when $x\gtrsim R^{\delta}$, $|\widehat{\Phi}_{\theta}(A^{-1}x)|\lesssim_N R^{-N}$ for all $N$. This proves the rapidly decaying property of $\widehat{\Phi}_{\theta}(x)$ outside $P^0_{\theta}$. 




\end{proof}

Let  $T_{\zeta,w}^L$ be a small thin tube $(\zeta, w, L) \in \widetilde{\mathbb{T}}_{\theta,v, \ell}$ in the $x$ coordinate (contained in $B(y, \rho)$). We now discuss how $T_{\zeta,w}^L$  is related to the large tube $T_{\theta,v}^{\ell}$. 
\begin{lemma}\label{lm: small wp2}For any $(\zeta, w, L)\in  \widetilde{\mathbb{T}}_{\theta,v, \ell}$, there holds 
	\[{\rm Angle}(L(\theta), L(\zeta))\lesssim \rho^{-1/2},\qquad {\rm Angle}(M(\theta), M(\zeta))\lesssim \rho^{-1/2},\]and 
	\[{\rm Dist}([T_{\theta,v}^{\ell}\cap B(y, 2\rho)]+2P^0_{\theta}, T^{L}_{\zeta,w})\lesssim R^{\delta}.\]
\end{lemma}
\begin{proof}
First, it is obvious to see that 
\[
{\rm Angle}(L(\theta), L(\zeta))\lesssim \rho^{-1/2},\qquad {\rm Angle}(M(\theta), M(\zeta))\lesssim \rho^{-1/2}
\]from the definition of $\widetilde{\mathbb{T}}_{\theta,v, \ell}$. It thus suffices to show the last inequality.

By definition, in the $x$ coordinate,
$$T^{L}_{\zeta,w} = P^{L}_{\zeta,w} + \rho L(\zeta) + \{y\}.$$	
(When the ball $B(y, \rho)$ is clear from the context, by abusing  notation, we use $T^L_{\zeta,w}$ to denote the set $P^L_{\zeta,w}+\rho L(\zeta)+\{y\}$, where $\rho L(\zeta):=\{t L(\zeta):\, 0\leq t\leq \rho\}$ and similarly for the $\rho L(\theta)$ below.) By Lemma~\ref{lm: small wp}, 
\[
{\rm Dist}(P^{L}_{\zeta, w}, \, P^{\ell}_{\theta,v}+P^{0}_{\theta} -\partial_{\xi} \psi_y (\xi_{\theta}))\lesssim R^{\delta}.
\]Moreover, one has $P^L_{\zeta,w} +\rho L(\zeta) \subset P^{\ell}_{\theta,v}+P^0_{\theta}+\rho L(\theta)$ because
${\rm Angle}(L(\zeta),L(\theta))\lesssim \rho^{-1/2}$. 

Since $|\xi_{\theta}|=1$, we have $\partial_{\xi}\psi_{y}(\xi_{\theta}) = y' + y_n \xi_{\theta}$ where $y':=(y_1, \dots, y_{n-1})$.  
So $${\rm Dist}(T^{L}_{\zeta,w}, \, 2P^0_{\theta} + P^{\ell}_{\theta,v} + \rho L(\theta) - \{y_n L(\theta)\})\lesssim R^\delta.$$

It suffices to show that $P^{\ell}_{\theta,v} +\rho L(\theta) - \{y_n L(\theta)\} \subset T^{\ell}_{\theta,v} \cap B(y, 2\rho)$, which is obviously the case. 
\end{proof}

\begin{remark}\label{rmk: small wp}
	Given a ball $B(y, \rho)$, in the paraboloid case treated in Section 7 of  \cite{Guth2}, many large wave packets $(\theta,v)$ might give rise to essentially the same set $\widetilde{\mathbb{T}}_{\theta,v}$ (which is the analog of our set $\widetilde{\mathbb{T}}_{\theta,v,\ell}$; see \cite[Page 30, (7.1)]{Guth2} for the exact definition). The reason is that for any $\theta_1, \theta_2 \subset \zeta$, if $B(y, \rho)\cap R^{\delta}T_{\theta_1, v_1}\cap R^{\delta}T_{\theta_2, v_2}\neq \emptyset$, then $B(y, 2\rho)\cap 2R^{\delta} T_{\theta_1, v_1} \cap 2R^{\delta}T_{\theta_2, v_2}$ contains a medium tube segment $T_{\rho}$ of length $\rho$ and radius $R^{1/2+\delta}$. And both $\widetilde{\mathbb{T}}_{\theta_j, v_j}$, $j=1,2$, consist of all the small wave packets $(\zeta,w)$ such that the small tube $T_{\zeta,w}\subset T_{\rho}$. 
	
	However, in the cone case, it is not true anymore that many $(\theta,v, \ell)$ always give rise to essentially the same set $\widetilde{\mathbb{T}}_{\theta,v, \ell}$. This is because $T^{\ell}_{\theta,v}$ is too thin in the mini direction $M(\theta)$. If $\theta_1, \theta_2\subset \zeta$ and $B(y, \rho)\cap R^{\delta}T^{\ell_1}_{\theta_1, v_1} \cap R^{\delta}T^{\ell_2}_{\theta_2, v_2}\neq \emptyset$, then it might happen that $B(y,2\rho)\cap 2R^{\delta}T^{\ell_1}_{\theta_1, v_1} \cap 2R^{\delta}T^{\ell_2}_{\theta_2, v_2}$ is contained in $2R^{\delta}T^{L}_{\zeta,w}$ for a single small wave packet $T^L_{\zeta,w}$. 
	
	In fact, a small wave packet $T^{L}_{\zeta,w}$ might belong to as many as $(\frac{R}{\rho})^{\frac{n-2}{2}}$ different $\widetilde{\mathbb{T}}_{\theta,v,\ell}$, which is about the number of disjoint $\theta\subset \zeta$. 
\end{remark}

Remark~\ref{rmk: small wp} suggests that it is difficult to use medium tube segments (of uniform length and radius) as a bridge to pass back and forth between large and small wave packets, unlike the situation with the paraboloid. Hence, in the following, we will only focus on grouping large and small wave packets into different sub-collections, which play the role of the ``medium tubes'' in the cone case. 

Here are the details. Let $\zeta_0$ be a sector on $2\bar{B}^{n-1}\setminus B^{n-1}$ of radius $\rho^{-1/2}$, and $v_0\in R^{1/2+\delta} \mathbb{Z}^{n-1}\cap B(0,\rho)$. We define the set $\widetilde{\mathbb{T}}_{\zeta_0, v_0}$ as follows: 
\[
\widetilde{\mathbb{T}}_{\zeta_0, v_0} := \left\{(\zeta, w, L): \, {\rm Dist}(\zeta_0, \zeta)\lesssim \rho^{-1/2}, \,P^{L}_{\zeta,w}\subset B(v_0, R^{1/2+2\delta}),\, L=1, \dots, \rho^{1/2}\right\},
\]where $P^{L}_{\zeta,w}$ is the essential support of the Fourier transform of $\widetilde{f}_{\zeta, w}^{L}$. The tube $T^{L}_{\zeta,w}$ in the $x$ coordinate is $T^{L}_{\zeta,w}=P^{L}_{\zeta,w}+\rho L(\zeta)+\{y\}$ where $\rho L(\zeta)$ is the line segment $\{ tL(\zeta): 0\leq t \leq \rho\}$. We also define the collection 
\[
\begin{split}
\mathbb{T}_{\zeta_0, v_0}(y):=& \Big\{  (\theta,v, \ell): \, {\rm Dist}(\theta,\zeta_0)\lesssim \rho^{-1/2},\,  T_{\theta,v}^{\ell} \cap B(y,\rho) \subset B(v_0, R^{1/2+2\delta})+\rho L(\zeta_0) +\{y\}, \\
&\qquad\qquad\qquad\qquad\qquad\qquad\qquad\qquad\qquad\qquad\qquad\qquad\qquad\qquad \ell =1, \dots, R^{1/2} \Big\}.
\end{split}
\]

For $v_0, v_0'$ satisfying ${\rm Dist}(v_0, v_0')\gtrsim R^{1/2+2\delta}$, one has that $\widetilde{\mathbb{T}}_{\zeta_0, v_0}$ and $\widetilde{\mathbb{T}}_{\zeta_0, v_0'}$ are essentially disjoint. So are $\mathbb{T}_{\zeta_0, v_0}(y) $ and $\mathbb{T}_{\zeta_0,v_0'}(y)$. In addition, the collections $\widetilde{\mathbb{T}}_{\zeta_0, v_0}$ and $\widetilde{\mathbb{T}}_{\zeta'_0, v_0}$ are essentially disjoint if ${\rm Dist}(\zeta_0,\zeta'_0)\gtrsim \rho^{-1/2}$. So are $\mathbb{T}_{\zeta_0, v_0}(y)$ and $\mathbb{T}_{\zeta_0', v_0}(y)$. Moreover, the collections $\widetilde{\mathbb{T}}_{\zeta_0, v_0}$ and $\mathbb{T}_{\zeta_0, v_0}(y)$ exhaust the set of all small wave packets $\{(\zeta,w,L)\}$ and the set of all large wave packets $\{(\theta,v,\ell)\}$ that intersect $B(y,\rho)$ respectively as $(\zeta_0,v_0)$ ranges over all possible choices.

Furthermore, for any $(\zeta_0,v_0)$, these two collections are naturally connected:
\[
\widetilde{\mathbb{T}}_{\zeta_0, v_0} = \underset{(\theta, v, \ell)\in \mathbb{T}_{\zeta_0, v_0}(y)}{\bigcup} \widetilde{\mathbb{T}}_{\theta, v, \ell}.
\]Therefore, applying Lemma~\ref{lm: small wp} and \ref{lm: small wp2}, one immediately obtains the following corollary. 
\begin{lemma}\label{lm: small wp g}
	If $g$ is concentrated on large wave packets in $\mathbb{T}_{\zeta_0, v_0}(y)$, then  $\widetilde{g} = e^{i\psi_y} g$ is concentrated on small wave packets in $\widetilde{\mathbb{T}}_{\zeta_0,v_0}$. On the other hand, if $\widetilde{g}$ is concentrated on small wave packets in $\widetilde{\mathbb{T}}_{\zeta_0, v_0}$, then inside $B(y, \rho)$, $g$ is concentrated on large wave packets in $\mathbb{T}_{\zeta_0, v_0}(y)$.
\end{lemma}

We need a few more notations before wrapping up the discussion on large and small wave packets. For a given ball $B(y, \rho)$ and any function $g$,   define the part of $g$ concentrated on large wave packets from $\mathbb{T}_{\zeta_0, v_0}(y)$ as $g_{\zeta_0, v_0}$:
\begin{equation}
g_{\zeta_0,v_0}:=\sum_{(\theta,v,\ell)\in \mathbb{T}_{\zeta_0, v_0}(y)} g_{\theta,v}^\ell,
\end{equation}
\begin{equation}
\widetilde{g}_{\zeta_0,v_0}:=\sum_{(\zeta,w,L)\in \widetilde{\mathbb{T}}_{\zeta_0, v_0}} \widetilde{g}_{\zeta,w}^L.
\end{equation}These give rise to the following decompositions of $g$ and $\widetilde{g}$ into wave packets that are grouped together by collections $\mathbb{T}_{\zeta_0,v_0}(y)$ and $\widetilde{\mathbb{T}}_{\zeta_0,v_0}$ respectively:
\[
g= \sum_{(\zeta_0,v_0)} g_{\zeta_0,v_0}+{\rm RapDec}(R)\|g\|_{L^2},\qquad \widetilde{g}= \sum_{(\zeta_0,v_0)} \widetilde{g}_{\zeta_0,v_0}+{\rm RapDec}(R)\|\widetilde{g}\|_{L^2},
\]
where the sums above are over all possible sectors $\zeta_0$ of width $\rho^{-1/2}$  partitioning the annulus, and all $v_0\in R^{\frac{1}{2}+\delta}\mathbb{Z}^{n-1}\cap B(0,\rho)$. 

Moreover, it is easy to see that both decompositions satisfy orthogonality:
\[
\|g\|_{L^2}^2\sim \sum_{(\zeta_0,v_0)} \|g_{\zeta_0,v_0}\|^2_{L^2},\qquad \|\widetilde{g}\|_{L^2}^2\sim \sum_{(\zeta_0,v_0)} \|\widetilde{g}_{\zeta_0,v_0}\|^2_{L^2}.
\]These decompositions will be used later in the transversal sub-case. The discussion on how to adjust $f$ into a wave packet decomposition inside a smaller ball $B(y,\rho)$ is complete.

\vspace{10pt}
Next, we will go back to the algebraic case and study its two sub-cases. Recall that we need to study the following situation: there is a function $g$ that is concentrated on wave packets in $\mathbb{T}_{\mathbf{Z}}$, and we would like to study $Eg$ restricted on a smaller ball $B(y, \rho)\subset B_R$. 


\subsubsection{The tangential sub-case}

In this part, suppose 
\[
\sum_{j}\|Ef_{j, {\rm tang}}\|^p_{BL^p_{k,A/2}(B_j)}\gtrsim \|Ef\|^p_{BL^p_{k,A}(B_R)}.
\]We would like to apply the induction hypothesis that the desired estimate holds for $m-1$, $\frac{A}{2}$ and $\rho$. Hence, in each $B_j$, we need to redo the wave packet decomposition of $f_{j, {\rm tang}}$ at the smaller scale $\rho$ and verify that the assumptions in Theorem \ref{theorem8.1} are satisfied, i.e. $f_{j,{\rm tang}}$ is concentrated on small wave packets that are $\rho^{-1/2+\delta_{m-1}}$-tangent to the variety $\mathbf{Y}$ in the ball $B_j$ of radius $\rho$.


%

Once we understood how to adjust $Ef_j$ into small wave packets in $B_j$ for the cone in the previous subsection, the verification of these properties is very similar to the paraboloid case (see \cite[Section 8.3]{Guth2}). We sketch the idea here. We know that $f_{j,{\rm tang}}$ is concentrated on wave packets $(\theta,v, \ell) \in \mathbb{T}_{j,{\rm tang}}$. 

To ease the notation, let $g=f_{j, {\rm tang}}$ and decompose 
$$\widetilde{g}=\sum_{\zeta,w, L} \widetilde{g}_{\zeta, w}^L  +{\rm RapDec}(R)\|f\|_{L^2}.$$ We would like to check that $\widetilde{g}$ is concentrated on wave packets  $(\zeta, w, L)$ tangential to $\mathbf{Y}$ in $B_j$ in the sense of Definition~\ref{tangent}. In other words, we would like to show that $\widetilde{g}$ is concentrated on wave packets $(\zeta, w, L)$ such that 
\begin{equation}\label{physical distance}
T_{\zeta, w}^L \subset N_{\rho^{1/2+\delta_{m-1}}}(\mathbf{Y})\cap B_j,
\end{equation}
and for any $x\in T_{\zeta,w}^L$,  and any $y\in \mathbf{Y}\cap B_j$ with $|x-y|\lesssim \rho^{1/2+\delta_{m-1}}$,
\begin{equation}\label{angle distance}
{\rm Angle}(L(\zeta), T_{y} \mathbf{Y})\lesssim \rho^{-1/2+\delta_{m-1}}.
\end{equation}

We know that $g=f_{j,{\rm tang}}$ is concentrated on wave packets $(\theta,v, \ell)\in \mathbb{T}_{j,{\rm tang}}$, which by definition obeys (\ref{eqn:verify1}) and (\ref{eqn:verify2}). These inequalities imply that $T_{\theta,v}^{\ell}\cap B_j$ lies in the desired neighborhood of $\mathbf{Y}\cap B_j$ and makes a small enough angle with $T_y(\mathbf{Y})$.  By Lemma~\ref{lm: small wp},   for any $(\theta,v, \ell)$, $(f^{\ell}_{\theta,v})^{\sim} $ is concentrated on wave packets $(\zeta, w, L)\in \widetilde{\mathbb{T}}_{\theta,v, \ell}$.  By the definition of $\widetilde{\mathbb{T}}_{\theta,v, \ell}$ and Lemma~\ref{lm: small wp2}, if $(\theta, v, \ell)\in \mathbb{T}_{j, {\rm tang}}$ and $(\zeta,w , L)\in \widetilde{\mathbb{T}}_{\theta,v, \ell}$, then $T^{L}_{\zeta,w }$ obeys (\ref{physical distance}) and (\ref{angle distance}).

We have thus verified the hypotheses of Theorem~\ref{theorem8.1} for $\widetilde{g}$ with the variety $\mathbf{Y}$ on the ball $B_j$, and so by induction on dimension, we get for each $j$ that
\[
\|Ef_{j,{\rm tang}}\|_{BL^p_{k,A/2}(B_j)}\leq C(K,\epsilon/2,m-1,D(\epsilon,D_{\mathbf{Z}}))\rho^{\epsilon/2}\rho^{\delta(\log{\bar{A}}-\log(A/2))}\rho^{-e+\frac{1}{2}}\|f_{j,{\rm tang}}\|_{L^2}
\]for all 
\[
2\leq p\leq p(m-1,k):=2\cdot\frac{m-1+k}{m-1+k-2}\qquad \text{with } e=\frac{1}{2}\left(\frac{1}{2}-\frac{1}{p} \right)(n+k).
\]Note that $p(m,k)<p(m-1,k)$, so the above estimate applies to all $p\in [2, p(m,k)]$. Summing over all the balls $B_j$ (with total number $\lesssim R^{O(\delta_{m-1})}$), one has
\[
\begin{split}
\|Ef\|_{BL^p_{k,A}(B_R)}\lesssim &R^{O(\delta_{m-1})} C(K,\epsilon/2,m-1,D(\epsilon,D_{\mathbf{Z}}))\rho^{\epsilon/2}\rho^{\delta(\log{\bar{A}}-\log(A/2))}\rho^{-e+\frac{1}{2}}\|f\|_{L^2}\\
\lesssim &R^{O(\delta_{m-1})}C(K,\epsilon/2,m-1,D(\epsilon,D_{\mathbf{Z}})) R^{\epsilon/2} R^{\delta(\log{\bar{A}}-\log A)}R^{-e+\frac{1}{2}}\|f\|_{L^2},
\end{split}
\]where the last step follows from the observation that $\rho^{-e+1/2}\leq R^{O(\delta_{m-1})}R^{-e+1/2}$ and 
\[\rho^{\delta(\log{\bar{A}}-\log(A/2))}\leq R^\delta R^{\delta(\log{\bar{A}}-\log A)},\]recalling that $\delta\ll \delta_{m-1}$.

Since $\delta_{m-1}\ll \epsilon$, one has $R^{O(\delta_{m-1})}R^{\epsilon/2}\lesssim R^{\epsilon}$. The induction thus closes if one chooses $C(K,\epsilon,m,D_{\mathbf{Z}})$ larger than $C(K,\epsilon/2,m-1,D(\epsilon,D_{\mathbf{Z}}))$. The discussion of the tangential sub-case is complete.

\subsubsection{The transversal sub-case}\label{sec: equi}

In the transversal case, our goal is to estimate 
\[
\sum_{j}\|Ef_{j, {\rm trans}}\|^p_{BL^p_{k,\frac{A}{2}}(B_j)},\]assuming that it dominates $\|Ef\|^p_{BL^p_{k,A}(B_R)}$. Our first claim is
\begin{equation}\label{eqn: trans1}
\sum_j\|f_{j,{\rm trans}}\|_{L^2}^2\lesssim \text{Poly}(D(\epsilon,D_{\mathbf{Z}})) \|f\|_{L^2}^2,
\end{equation} where $\text{Poly}(D(\epsilon,D_{\mathbf{Z}}))$ is a polynomial of $D(\epsilon,D_{\mathbf{Z}})$. Since $D(\epsilon,D_{\mathbf{Z}}) \lesssim_{\epsilon, m} 1$,  the constant  $\text{Poly}(D(\epsilon,D_{\mathbf{Z}}))\lesssim_{\epsilon, m} 1$. Inequality~\eqref{eqn: trans1}   will enable us to reduce the desired estimate to be inside each individual $B_j$.

To see (\ref{eqn: trans1}), one rewrites its left hand side as
\[
\sum_j\|f_{j,{\rm trans}}\|_{L^2}^2=\sum_{(\theta,v,\ell)}|\{j:\,(\theta,v,\ell)\in \mathbb{T}_{j,{\rm trans}}\}|\|f_{\theta,v}^\ell\|^2_{L^2}.
\]Then it suffices to show that $|\{j:\,(\theta,v,\ell)\in \mathbb{T}_{j,{\rm trans}}\}|\lesssim_{\epsilon,D_{\mathbf{Z}}}1$ for each $(\theta,v,\ell)$. According to \cite[Beginning of Section 8.4]{Guth2}, this is indeed the case. In fact, it is true even if one replaces the wave packet $T_{\theta,w}^\ell$ by a cylinder (with radius $r=R^{1/2+\delta_m}=\rho^{1/2+\delta_{m-1}}$ and the same central line as $T_{\theta,w}^\ell$ in the long direction). This is in particular a consequence of Lemma \ref{lem: trans} (with the choice $\alpha=\rho^{-1/2+\delta_{m-1}}$) and we omit the details.

Therefore, in the following, we would like to estimate $Ef_{j,{\rm trans}}$ in each ball $B_j$ and apply induction on the radius $R$. The induction hypothesis is: suppose $f$ is concentrated on wave packets from $T_{\mathbf{Z}}$, the collection of wave packets $T_{\theta,v}^\ell$ (at scale $R$) that are $\rho^{-1/2+\delta_m}$-tangent to the $m$-dimensional variety $\mathbf{Z}$ in $B_j$, 
then $$\|Ef\|_{BL^{p}_{k,A}(B_j)} \leq C(K,\epsilon, m,D_{\mathbf{Z}}) \rho^{\epsilon+ O(\delta)-e+\frac{1}{2}}\|f\|_{L^2}$$
where $2\leq p\leq p(m,k)$ and $e= \frac{1}{2}(\frac{1}{2}-\frac{1}{p})(n+k)$. 
There are two barriers preventing us from applying the induction hypothesis directly. 

First, $Ef$ is only known to be concentrated in the $R^{\frac{1}{2}+\delta_m}$-neighborhood of $\mathbf{Z}$, which is larger than the needed $\rho^{\frac{1}{2}+\delta_m}$-neighborhood. Therefore, one needs to decompose the $R^{\frac{1}{2}+\delta_m}$-neighborhood of $\mathbf{Z}$ into different layers of thickness $\rho^{\frac{1}{2}+\delta_m}$ so that each layer is a $\rho^{\frac{1}{2}+\delta_m}$-neighborhood of a translate $\mathbf{Z}_b$ of $\mathbf{Z}$. We also need to do a wave packet decomposition in $B_j$ at the scale $\rho$ similarly as in the tangential sub-case. Write $g=f_{j,{\rm trans}}$ and decompose 
\[
\widetilde{g}= \sum_{(\zeta,w,L)}\widetilde{g}_{\zeta,w}^{L} + \text{RapDec}(\rho) \|f\|_{L^2}.\]One needs to verify that each small wave packet $T_{\zeta,w}^L$ lies inside a unique layer $\mathbf{Z}_{b}$ and that $T_{\zeta,w}^{L}$ is $\rho^{-1/2+\delta_m}$-tangent to $\mathbf{Z}_{b}$. This is true and can be argued in the same way as in the paraboloid case: each small wave packet comes from some large wave packets that are even more tangent to $\mathbf{Z}$, so the small wave packet lies entirely in some layer $\mathbf{Z}_b$ and is also $\rho^{-1/2+\delta_m}$-tangent to $\mathbf{Z}_b$. The justification proceeds in the exact same way as the paraboloid case, which we will sketch later and refer the interested reader to \cite[Section 7, page 32-33]{Guth2} for more details.

Second, notice that $\rho^{\epsilon+ O(\delta)-e+\frac{1}{2}}$ is greater than $R^{\epsilon+ O(\delta)-e+\frac{1}{2}}$ for $p(n,k)\leq p\leq p(m,k)$. In order to obtain the correct (negative) power, one needs to find more structure between different layers. In the paraboloid case, the $L^2$-norms of $f$ on different layers turn out to be roughly the same, which is  referred to as the \emph{equidistribution} phenomenon. This is a key ingredient in the treatment of the corresponding case for the paraboloid in \cite{Guth2}. However, the argument there doesn't apply to the cone since our tubes are thinner and there are different mini directions existing for each wave packet. We solve this issue by showing that there still holds an analogous version of the equidistribution property for the cone, once a negligible part of $f$ is removed. This is one of the main novelties of our proof. In the following, we first establish the equidistribution property, then apply it to complete the proof of the transversal sub-case. \\ 

\noindent {\bf Transverse equidistribution estimates}

Intuitively, the property of equidistribution holds true because of the following heuristic:  when all the tubes are tangent to an $m$--dimensional low degree sub-variety $\mathbf{Z}$, the situation is similar to a $k$-broad restriction problem in $\mathbb{R}^m$. 

Given a point $\xi=(\xi_1, \dots , \xi_n)$ on the cone
\[
\mathcal{C}=\left\{\xi\in\mathbb{R}^n:\,\xi_1^2+\cdots+\xi_{n-1}^2=\xi_n^2,\, \xi_n>0,\, 1\leq\xi_j\leq 2,\, \forall 1\leq j\leq n-1\right\},
\]the normal direction $\mathbf{n}_\xi$ at $\xi$ is parallel to $(\xi_1, \dots, \xi_{n-1}, -\xi_n)$. Fix a ball $B$ of radius $R^{1/2+\delta_m}$. Let $V$ be the tangent space of $\mathbf{Z}$ at some point in $B\cap \mathbf{Z}$. Note that in hindsight, it does not matter which point we pick (because of Definition \ref{tangent} of tangent tubes).
Assume that $V$ is given by the equations $$\sum_{j=1}^n a_{i,j}x_j = b_i;\quad i= 1, \dots, n-m,$$ then the collection of all points $\xi$ on $\mathcal{C}$ such that the normal vector $\mathbf{n}_\xi$  of $\mathcal{C}$ at $\xi$ is parallel to $V$ lies in the vector space $V^+$, given by
$$\sum_{j=1}^{n-1} a_{i,j}\xi_j -a_{i,n}\xi_n=0;\quad i=1,\dots, n-m.$$

Recalling (\ref{tangentset}), define
\[
\mathbb{T}_{B,\mathbf{Z}}:=\left\{(\theta,v,\ell)\in\mathbb{T}_\mathbf{Z}:\,T_{\theta,v}^\ell\cap B\neq\emptyset\right\}.
\]For any function $h:\,2\bar{B}^{n-1}\setminus B^{n-1}\to \mathbb{C}$, let $h_B:=\sum_{(\theta,v,\ell)\in \mathbb{T}_{B,\mathbf{Z}}} h_{\theta,v}^\ell$. Define the lift of $h_B$ onto the cone as $H_B(\cdot):=h_B\circ \pi(\cdot)$, where $\pi$ denotes the projection from the cone $\mathcal{C}$ onto its first $(n-1)$ coordinates. Then, one observes that the support of $H_B$ lies inside $N_{R^{-1/2+\delta_m}}V^{+} \cap \mathcal{C}$. Indeed, $\text{supp}\,H_B$ lies inside $N_{R^{-1/2+\delta_m}}V^{+} $ by the definition of tangential wave packets, and $\text{supp}\,H_B$ lying in $\mathcal{C}$ is due to the definition of $H_B$. 

\begin{remark}\label{rmk: 5.7}
What does $N_{R^{-1/2+\delta_m}}V^{+} \cap \mathcal{C}$  look like? One special case is when $V^{+}$ is tangent to $\mathcal{C}$. As shown in the proof of Lemma~\ref{case} below, in this case $\dim V^{+}\cap \mathcal{C} =1$ and $N_{R^{-1/2+\delta_m}}V^{+} \cap \mathcal{C}$ is a $R^{-1/4+2\delta_m}$-neighborhood of  few radial line segments. In general, if $V^{+}$ is tangent to $\mathcal{C}$ up to an angle of $R^{-\delta_m}$ (``$K^{-2}$'' in Lemma~\ref{case} below), $N_{R^{-1/2+\delta_m}}V^{+} \cap \mathcal{C}$ is an $O(R^{-\delta_m})$-neighborhood of few radial line segments.
%

%
\end{remark}

%

\begin{lemma}\label{case}
Decompose $\mathbb{R}^n = V^+\oplus W$ so that $V^+\perp W$. Then either a) or b) is true:

a) $W$ and $V$ are transversal in the sense that $\text{Angle}(V, W)>K^{-2}$;

b) $\supp h_B$ is contained in the union of $O(1)$ many sectors $\tau_j$ in $2\bar{B}^{n-1}\setminus B^{n-1}$ of dimensions $1\times K^{-2}\times\cdots\times K^{-2}$. 
\end{lemma}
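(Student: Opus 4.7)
My plan is to prove the contrapositive: assuming case (a) fails, so $\text{Angle}(V,W)\le K^{-1}$, I will deduce case (b). The first step is to encode the angle condition algebraically via the last-coordinate reflection $T:\mathbb{R}^n\to\mathbb{R}^n$, $T(\bar{\xi},\xi_n)=(\bar{\xi},-\xi_n)$. This is an orthogonal involution satisfying $V^{+}=T(V_{0})$ and $W=(V^{+})^{\perp}=T(V_{0}^{\perp})$, where $V_{0}$ is the linear part of $V$, and such that $Q(\xi,\eta)=-T(\xi)\cdot\eta$ for the Lorentzian form $Q(\xi)=\xi_n^{2}-|\bar{\xi}|^{2}$. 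Since $T$ is an isometry, $\text{Angle}(V,W)=\text{Angle}(V^{+},V_{0}^{\perp})$, and the failure of (a) produces a unit vector $\xi_{0}\in V^{+}$ with $v:=T(\xi_{0})\in V_{0}$ satisfying $|\text{proj}_{V^{+}}v|\le K^{-1}$. This immediately gives $|Q(\xi_{0},u)|=|\text{proj}_{V^{+}}v\cdot u|\le K^{-1}|u|$ for every $u\in V^{+}$, so $\xi_{0}$ is approximately null and approximately in the radical of $Q|_{V^{+}}$.

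Next I will exploit an explicit spectral description of $Q|_{V^{+}}$. In any Euclidean orthonormal basis $e'_{1},\ldots,e'_{m}$ of $V^{+}$, setting $n_{i}:=(e'_{i})_{n}$, the identity $Q(e'_{i},e'_{j})=2n_{i}n_{j}-\delta_{ij}$ exhibits the matrix of $Q|_{V^{+}}$ as $A=2nn^{T}-I_{m}$, whose eigenvalues are $2|n|^{2}-1$ (with eigenvector $n/|n|$, corresponding in $\mathbb{R}^{n}$ to $\text{proj}_{V^{+}}e_{n}/|\text{proj}_{V^{+}}e_{n}|$) and $-1$ (with multiplicity $m-1$). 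The previous step forces $|2|n|^{2}-1|\le O(K^{-1})$, so $|n|^{2}=1/2+O(K^{-1})$, and identifies $\xi_{0}$ up to $O(K^{-1})$ with the distinguished direction $\text{proj}_{V^{+}}e_{n}/|\text{proj}_{V^{+}}e_{n}|$. Diagonalizing $A$ and solving $(2|n|^{2}-1)y_{0}^{2}-y_{1}^{2}-\cdots-y_{m-1}^{2}=0$ on the unit sphere of $V^{+}$ gives $\sum_{j\ge 1}y_{j}^{2}\le O(K^{-1})y_{0}^{2}$, so the null cone of $Q|_{V^{+}}$ is contained in a thin pencil around the line $\mathbb{R}\xi_{0}$.

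The final step is to transfer this concentration to the angular statement of case (b). For $\xi\in N_{R^{-1/2+\delta}}V^{+}\cap\mathcal{C}$ decomposed as $\xi=\xi'+\rho$ with $\xi'\in V^{+}$ and $|\rho|\le R^{-1/2+\delta}$, expanding $Q$ yields $|Q(\xi')|\le O(R^{-1/2+\delta})\ll K^{-1}$, so the null-cone bound of the previous paragraph applies to $\xi'$ with negligible extra error. The identities $(\xi_{0})_{n}=|n|$ and $|\bar{\xi}_{0}|=\sqrt{1-|n|^{2}}$, both $\approx 1/\sqrt{2}$, show that the projection $\pi(\xi)=\bar{\xi}/|\bar{\xi}|$ is well-conditioned near $\xi_{0}$; combining with the cone constraint $\xi_{n}=|\bar{\xi}|$, which cancels the leading radial contribution, pins $\pi(\text{supp}\,f_{B})$ inside $O(1)$ spherical caps of radius $K^{-1}$ on $S^{n-2}$, equivalently inside $O(1)$ sectors $\tau_{j}$ of dimension $1\times K^{-1}\times\cdots\times K^{-1}$.

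The main obstacle I expect is this last angular bookkeeping: the raw spectral estimate gives only an $O(K^{-1/2})$ transverse spread of the null cone inside $V^{+}$, and upgrading this to the $K^{-1}$-width sectors of the lemma requires simultaneously using that $\xi_{0}$ is approximately null (not merely approximately in the radical) and the truncated-cone constraint $|\bar{\xi}|\in [1,2]$, with separate but parallel treatment of the Lorentzian ($|n|^{2}>1/2$) and negative semidefinite ($|n|^{2}\le 1/2$) regimes of $Q|_{V^{+}}$. This is precisely the step where the cone's degeneracy along its generators---captured algebraically by the $1$-dimensional radical of $Q|_{V^{+}}$ in the exact tangent case---forces a finer analysis than the paraboloid argument of \cite{Guth2}.
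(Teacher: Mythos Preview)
Your approach is essentially the same as the paper's, only packaged with more machinery. The paper works directly with a unit vector $w\in W$ at angle $\le K^{-1}$ from $V$: writing $w=(\bar w,-w_n)$, the observation that $(\bar w,w_n)\in V^{\perp}$ (exactly your reflection $T$, applied once) forces $|w_n|\approx|\bar w|\approx 1/\sqrt{2}$, and then for any $\xi\in\supp F_B\subset N_{R^{-1/2+\delta}}V^{+}$ the near-orthogonality $\xi\perp w$ combined with the cone constraint $\xi_n=|\bar\xi|$ gives $\frac{|\bar\xi\cdot\bar w|}{|\bar\xi||\bar w|}\ge 1-O(K^{-1})$ in four lines. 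Your reflection $T$, the spectral decomposition $A=2nn^{T}-I_m$, and the distinguished eigendirection (the projection of $e_n$ onto $V^+$) all recover exactly this same bound---your $\xi_0$ is essentially $T(w)$, your $|n|^2=\tfrac12+O(K^{-1})$ is the paper's $|w_n^2-\tfrac12|\le O(K^{-1})$, and the angular direction $\bar\xi_0$ you pin down coincides with the paper's $\bar w$---so the extra structure buys nothing beyond the direct calculation.

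Your ``main obstacle'' is a real observation, but it is not one the paper resolves either: the paper simply asserts that $\cos\theta\ge 1-O(K^{-1})$ places $\bar\xi$ in an ``$O(K^{-1})$-angular'' neighborhood of $\bar w$, whereas the honest conclusion from that cosine bound is angular width $O(K^{-1/2})$. There is no finer analysis hiding here that upgrades $K^{-1/2}$ to $K^{-1}$, and you should not search for one. The discrepancy is a harmless imprecision in the lemma's statement: one may replace the threshold $K^{-1}$ in case (a) by $K^{-2}$ (equivalently, run the argument with $K^2$ in place of $K$), after which the same computation genuinely lands $\supp f_B$ in $O(1)$ sectors of width $K^{-1}$; both downstream uses of the lemma---the transverse equidistribution Lemma~\ref{equidistribution physical space} and the vanishing~\eqref{nonessential}---only require transversality up to a fixed power of $K$.
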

 \begin{proof}
 Let $\bar{\alpha}_i = (a_{i,1},\dots,a_{i,n-1})$ and  $\alpha_i = (\bar{\alpha}_i, -a_{i,n})$. Suppose there exists $w\in W$ such that $\text{Angle}(w, V)\leq K^{-2}$. Since $W\perp V^+$, one can write $w=\sum_{i=1}^{n-m}\lambda_i\alpha_i=:(\bar{w}, -w_n)$. Then by definition, it is straightforward to check that $(\bar{w},w_n)\in V^\perp$, which in particular implies that the angle between $w=(\bar{w}, -w_n)$ and $(\bar{w}, w_n)$ lies in the interval $\left[\frac{\pi}{2}-K^{-2},\frac{\pi}{2}+K^{-2}\right]$. Hence, the following hold true for $w$ and $\xi=(\bar{\xi}, \xi_n)\in\text{supp}\,H_B$:
 \begin{align*}
& \left|\frac{ |\bar{w}|^2 - w_n^2}{|\bar{w}|^2+w_n^2}\right|\lesssim K^{-2},\\
& |\bar{\xi}|^2-\xi_n^2=0,\\
& \frac{\left|\bar{\xi}\cdot\bar{w}-\xi_n w_n\right|}{|\xi||w|}\lesssim R^{-1/2+\delta_m}.
 \end{align*}
 
After renormalization so that $|w|=1$, the first inequality above shows that $\left|w_n^2- \frac{1}{2} \right|\leq C\cdot K^{-2}$. Combining the last two estimates together, we have $$\frac{|\bar{\xi}\cdot\bar{w}|}{|\bar{\xi}|\cdot|\bar{w}|}\geq\frac{|w_n\xi_n|}{|\bar{\xi}|\cdot|\bar{w}|}-C\cdot R^{-1/2+\delta_m}=\frac{|w_n|}{|\bar{w}|}-C\cdot R^{-1/2+\delta_m}\geq 1-C\cdot K^{-2}.$$
Thus the support of $h_B$ must lie in an $O(K^{-2})$-angular neighborhood of $\bar{w}$. In particular, $\supp h_B$ lies in an $O(K^{-2})$-angular region in $2\bar{B}^{n-1}\setminus B^{n-1}$, hence case b) is true.
 \end{proof}
For a fixed variety $\mathbf{Z}$, whether case a) or b) holds true depends only on the vector space $V$, in other words, only on the ball $B$. If we are in case b),  by the definition of the $BL^p_k$ norm, since $\text{supp} h_B$ is contained in the union of  $O(1)$ sectors  and we have chosen  $1\ll A \ll K$, 
 for all $k\geq 2$ there always holds
\begin{equation}\label{nonessential}
\|Eh_B\|_{BL^p_{k,A}(B)}^p=\mu_{Eh_B}(B)=\text{RapDec}(R) \|h_B\|_{L^2}^p.
\end{equation}On the other hand, if we are in case a), the following lemma, adapted from the paraboloid case (Lemma 6.2 of \cite{Guth2}), says that the $L^2$ norm of $Eh_B$ is equidistributed in $B$ along directions transverse to $V$. 
 \begin{lemma}\label{equidistribution physical space}
Let $h_B=\sum_{(\theta, v,\ell)\in \mathbb{T}_{B, \mathbf{Z}} }h_{\theta,v}^\ell$ and $\mathbf{Z}$ be defined as in Theorem~\ref{theorem8.1}. Suppose that $B$ is a ball of radius $R^{1/2+\delta_m}$ in $B_R\subset\mathbb{R}^n$, and satisfies case a) of Lemma \ref{case}. Then for any $\rho \leq R$,
$$ \int_{B\cap N_{\rho^{1/2+\delta_m}}(\mathbf{Z})}|Eh_B|^2 \lesssim R^{O(\delta_m)}\big( \frac{R^{1/2}}{\rho^{1/2}}\big)^{-(n-m)}\int_{2B}|Eh_B|^2 +\text{RapDec}(R)\|h_B\|_{L^2}^2.$$
\end{lemma}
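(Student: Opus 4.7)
The strategy is Fourier-analytic: exploit the Fourier localization of $Ef_B$ near $V^+\cap\mathcal C$ to show that the ``slab-mass'' function
\[
\tilde I(w):=\int |Ef_B(x)|^2\,\chi_B(x)\,\psi\!\left(\pi_{V^\perp}(x-p_0-w)/\rho^{1/2+\delta}\right)dx,\qquad w\in V^\perp,
\]
has Fourier support in a small ball of $V^\perp$, and then combine Bernstein's inequality for non-negative band-limited functions with a Fubini calculation for $\|\tilde I\|_1$. Here $p_0\in B\cap Z$, $\psi\ge 0$ is a Schwartz bump on $\mathbb R^{n-m}$, $\chi_B\ge 0$ is a smooth cutoff adapted to $B$, and $\pi_{V^\perp}$ is the orthogonal projection onto the linear tangent space $V$'s orthogonal complement. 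The bounded degree of $Z$ and the $R^{-1/2+\delta}$-tangency of the wave packets let one approximate $Z\cap B$ by $(V+p_0)\cap B$ with controlled error, so $\chi_0\gtrsim 1$ on $B\cap N_{\rho^{1/2+\delta}}(Z)$ and therefore $\tilde I(0)\gtrsim\int_{B\cap N_{\rho^{1/2+\delta}}(Z)}|Ef_B|^2$.

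The key assertion is that $\hat{\tilde I}$ is supported in a ball of radius $r:=CKR^{-1/2+\delta}$ in $V^\perp$. First, each wave packet in $\mathbb T_{B,Z}$ has its lift supported in $N_{R^{-1/2+\delta}}V^+\cap\mathcal C$ (as discussed before the lemma), so using that $V^+$ is a linear subspace, $\widehat{|Ef_B|^2}=\widehat{Ef_B}*\widehat{\overline{Ef_B}}$ is supported in $N_{2R^{-1/2+\delta}}(V^+)$; convolving with $\hat\chi_B$, concentrated at scale $R^{-(1/2+\delta)}\ll R^{-1/2+\delta}$, preserves this up to a constant. Second, a direct change of variables ($u=\pi_{V^\perp}(x-p_0)-w$) yields
\[
\hat{\tilde I}(\zeta^\perp)\ =\ c\,\rho^{(1/2+\delta)(n-m)}\,\hat\psi(\rho^{1/2+\delta}\zeta^\perp)\,\widehat{|Ef_B|^2\chi_B}(\iota(\zeta^\perp))\,e^{2\pi ip_0\cdot\zeta^\perp},
\]
where $\iota:V^\perp\hookrightarrow\mathbb R^n$ is the inclusion along the decomposition $\mathbb R^n=V\oplus V^\perp$. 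Case a) of Lemma \ref{case} gives $\mathrm{Angle}(V,W)\ge K^{-1}$, and by the duality of principal angles between $(V,W)$ and $(V^\perp,V^+)$ one obtains $\mathrm{Angle}(V^\perp,V^+)\ge K^{-1}$. Consequently $|\pi_W(\zeta^\perp)|\ge K^{-1}|\zeta^\perp|$ for all $\zeta^\perp\in V^\perp$, so $\iota(\zeta^\perp)\in N_{CR^{-1/2+\delta}}(V^+)$ forces $|\zeta^\perp|\le CKR^{-1/2+\delta}$, proving the claim.

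With this Fourier support, $\tilde I\ge 0$ on $V^\perp\cong\mathbb R^{n-m}$ with $\hat{\tilde I}$ supported in a ball of radius $r$, so $\|\tilde I\|_\infty\lesssim r^{(n-m)/2}\|\tilde I\|_2$ combined with $\|\tilde I\|_2^2\le\|\tilde I\|_\infty\|\tilde I\|_1$ gives $\|\tilde I\|_\infty\lesssim r^{n-m}\|\tilde I\|_1$. A Fubini swap yields $\|\tilde I\|_1\sim \rho^{(1/2+\delta)(n-m)}\int_{2B}|Ef_B|^2$ (with rapidly decaying tails from $\chi_B$), so
\[
\tilde I(0)\ \le\ \|\tilde I\|_\infty\ \lesssim\ \bigl(R^{-1/2+\delta}\rho^{1/2+\delta}\bigr)^{n-m}\int_{2B}|Ef_B|^2\ =\ R^{O(\delta)}\Bigl(\frac{R^{1/2}}{\rho^{1/2}}\Bigr)^{-(n-m)}\int_{2B}|Ef_B|^2,
\]
which is the lemma modulo the $\mathrm{RapDec}(R)\|f_B\|_{L^2}^2$ error absorbing the Schwartz tails and the tangent approximation error for $Z$.

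The main subtlety lies in the Fourier-support step for $\tilde I$: the slab cutoff is most naturally defined via distance to $Z$—in the $V^\perp$ direction—while the Fourier support of $|Ef_B|^2$ lives near the distinct subspace $V^+$, reflecting that the cone's Gauss map has rank $n-2$ (a null generator direction is present), so in general $V^+\ne V$. Case a) of Lemma \ref{case} is precisely what transfers the Fourier-support information between $V^+$ and $V^\perp$ with only a $K^{O(1)}$ loss; in case b) this transfer breaks down, but then $\mu_{Ef_B}(B)=0$ by \eqref{nonessential} and the lemma is vacuous. Dealing with this obstruction coming from the cone's null direction is the cone-specific novelty that does not arise in the paraboloid case of \cite{Guth2}.
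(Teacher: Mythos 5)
Your proof is correct and, since the paper gives no proof of this lemma beyond citing Lemma 6.5 of \cite{Guth2}, it matches the intended argument: the Fourier support of $|Ef_B|^2$ is pinned in an $O(R^{-1/2+\delta})$-neighborhood of the linear space $V^+$ of normals, case a) of Lemma~\ref{case} together with the duality of principal angles $\text{Angle}(V,W)=\text{Angle}(V^\perp,V^+)$ (valid because $\dim V+\dim W=n$ and the two restricted projections $\pi_{V^\perp}|_W$ and $\pi_W|_{V^\perp}$ are adjoints) confines the slab-average's Fourier transform to a ball of radius $O(KR^{-1/2+\delta})$ in $V^\perp$, and Bernstein plus a Fubini computation of the $L^1$ mass finish. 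One small piece of bookkeeping: the approximation of $Z\cap B$ by the affine tangent plane $V+p_0$, needed for the lower bound $\tilde{I}(0)\gtrsim\int_{B\cap N_{\rho^{1/2+\delta}}(Z)}|Ef_B|^2$, is controlled by the low degree of $Z$ together with the tangency of the whole family $\mathbb{T}_{B,Z}$ and contributes a multiplicative $R^{O(\delta)}K^{O(1)}$ loss, not a $\text{RapDec}(R)$ additive error as you wrote, but since the conclusion already carries an $R^{O(\delta)}$ (with implicit constants allowed to depend on $K$) this is harmless.
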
Note that the angle condition in case a) of Lemma \ref{case} is used in the analog of Lemma 6.5 of \cite{Guth2}, which is a key step in the proof of the above lemma.

	\begin{proof}	
	 Recall that $V$ is the tangent space of $\mathbf{Z}$ at some point in $B\cap \mathbf{Z}$, hence $$\mathbb{T}_{B, \mathbf{Z}}\subset \mathbb{T}_{B, V}:=\{ (\theta,v, \ell):\, T^{\ell}_{\theta,v} \cap B\neq \emptyset \text{ and } \text{Angle}(L(\theta), V)\lesssim R^{-\frac{1}{2}+\delta_{m}}\}.$$

According to the discussion above Remark~\ref{rmk: 5.7}, for all $(\theta,v,\ell)\in \mathbb{T}_{B,\mathbf{Z}}$, $(Eh^{\ell}_{\theta,v})^{\wedge}$ is supported in $N_{R^{-1/2+\delta_m}} V^{+} \cap \mathcal{C}$. For any $(n-m)$--plane $\Pi$ parallel to $W$ passing through $B$, if we view the restriction of $Eh_B$ on $\Pi$ as a function $G: \Pi\rightarrow \mathbb{C}$, then its Fourier transform is supported in a ball of radius $\lesssim R^{-1/2+\delta_m}$ because $V^{+}\perp W$. Therefore, by Lemma 6.4 in \cite{Guth2}, 
\begin{equation}\label{projection}
\int_{\Pi\cap B(x_0, \rho^{1/2+2\delta_m})} |Eh_B|^2\lesssim (\frac{R^{1/2-2\delta_m}}{\rho^{1/2+2\delta_m}})^{-\text{Dim}W} \int_{\Pi}  W_{B(x_0, R^{1/2-2\delta_m})} |Eh_B|^2,
\end{equation}
where $x_0$ is any point and $W_{B(x_0, R^{1/2-\delta_m})}$ is a weight that is equal to $1$ on $B(x_0, R^{1/2-2\delta_m})$ and rapidly decaying outside of it. 
Since $\text{Angle}(V, W) > K^{-2}$, we have for some $x_0\in B$ that
\begin{equation}\label{key}
\Pi \cap N_{\rho^{1/2+\delta_m}}(\mathbf{Z}) \cap B \subset \Pi \cap B(x_0, \rho^{1/2+2\delta_m}).
\end{equation}

Therefore,
\begin{equation}
\begin{split}
\int_{\Pi \cap N_{\rho^{1/2+\delta_m}}(\mathbf{Z}) \cap B} |Eh_B|^2\leq &\int_{\Pi\cap B(x_0, \rho^{1/2+2\delta_m})} |Eh_B|^2\\
\lesssim &R^{O(\delta_m)} (\frac{R^{1/2}}{\rho^{1/2}})^{-(n-m)} \int_{\Pi}  W_{B} |Eh_B|^2.
\end{split}
\end{equation}Note that if $x\in \Pi\setminus 2B$, $|Eh_B(x)|\leq \text{RapDec}(R)\|h_B\|_{L^2}$, which implies
\[
\int_{\Pi}  W_{B} |Eh_B|^2\leq \int_{\Pi\cap 2B}  W_{B} |Eh_B|^2 + \text{RapDec}(R)\|h_B\|_{L^2}^2.
\]Hence, by integrating over all $\Pi$ that are parallel to $W$ and passing through $B$, one obtains the desired estimate. 

\end{proof}

\begin{remark}
In the proof above, one can see that inequality (\ref{projection}) and  (\ref{key}) are the key estimates for the derivation of the transverse equidistribution of $Eh_B$. Note that inequality (\ref{projection}) is in fact general and stated as Lemma 6.4 in \cite{Guth2}. In the paraboloid case,  the angle condition implying (\ref{key}) always holds. However,  in the cone case, this is not always true, which is why we need to rule out the case b)  in Lemma~\ref{case}.
\end{remark}

The key property we are going to demonstrate is: inside each ball $B_j$ of radius $\rho$, the $L^2$ norm of the part of the function $f_{j,{\rm trans}}$ restricted in case a) of Lemma \ref{case} is equidistributed along the direction of a fixed vector $b$, the precise statement of which is postponed to Lemma \ref{precise} below. Unlike the paraboloid case, we do not have such equidistribution for the entire $f_{j,{\rm trans}}$, however, (\ref{nonessential}) ensures that the leftover part of $f_{j,{\rm trans}}$ is nonessential as it makes negligible contribution. 

Fix $B_j=B(y,\rho)$ and again write $g=f_{j, {\rm trans}}$ for short. Cover $B_j$ by balls $B$ of radius $R^{1/2+\delta_m}$ and partition $N_{R^{1/2+\delta_m}}(\mathbf{Z}) \cap B_{j}= X_a\cup  X_b$, where $X_a$ (resp. $X_b$) is the union of balls $B$ in case a) (resp. case b)) as defined in Lemma \ref{case}. 

Recall from Section \ref{subsec: largesmall} the definitions of collections $\mathbb{T}_{\zeta_0,v_0}(y)$ of large wave packets at scale $R$, $\widetilde{\mathbb{T}}_{\zeta_0,v_0}$ of small wave packets at scale $\rho$, and the notations $g_{\zeta_0,v_0}, \widetilde{g}_{\zeta_0,v_0}$, where $\zeta_0$ is a sector in $2\bar{B}^{n-1}\setminus B^{n-1}$ of radius $\rho^{-1/2}$ and $v_0\in R^{\frac{1+\delta}{2}} \mathbb{Z}^{n-1}\cap B(0,\rho)$.  We define $g_{{\rm ess}}$, the essential part of $f_{j,{\rm trans}}$ that is restricted in case a), as follows:
\[
g_{{\rm ess}}=\sum_{(\zeta_0,v_0)\in \mathbb{T}_{{\rm ess}}}g_{\zeta_0,v_0}=g-\sum_{(\zeta_0,v_0)\in \mathbb{T}_{{\rm tail}}}g_{\zeta_0,v_0},
\]where 
\[
\mathbb{T}_{{\rm ess}}:=\{(\zeta_0,v_0):\, \exists (\theta,v,\ell)\in \mathbb{T}_{\zeta_0,v_0}(y)\, \text{ s.t. } T^\ell_{\theta,v}\cap X_a\neq\emptyset\},
\]
\[
\mathbb{T}_{{\rm tail}}:=\{(\zeta_0,v_0):\, \forall (\theta,v,\ell)\in \mathbb{T}_{\zeta_0,v_0}(y),\,\, T^\ell_{\theta,v}\cap X_a=\emptyset\}.
\]Note that we only consider those $\mathbb{T}_{\zeta_0,v_0}$ that contains some large wave packet intersecting $B_j$, so $\mathbb{T}_{{\rm ess}}$ and $\mathbb{T}_{{\rm tail}}$ above form a partition of all $(\zeta_0,v_0)$ that matter to $B_j$.

\begin{remark}\label{rmk: B}
Another important observation is: for any given $(\zeta_0,v_0)$, if there exists $(\theta,v,\ell)\in \mathbb{T}_{\zeta_0,v_0}(y)$ and $R^{1/2+\delta_m}$-ball $B$ such that $T_{\theta,v}^\ell\cap B\cap B_{j}\neq\emptyset$, then for all $(\theta',v',\ell')\in \mathbb{T}_{\zeta_0,v_0}(y)$, one has $T_{\theta',v'}^{\ell'}\cap 2B\neq \emptyset$. 
This is because the union of all $T^{\ell}_{\theta,v}\cap B_j$ over $(\theta,v, \ell) \in \mathbb{T}_{\zeta_0, v_0}(y)$ is a short tube of length $\rho$ and radius $\sim R^{1/2 +2\delta}$. So when $B$ intersects this short tube, all $T^{\ell}_{\theta,v}$ automatically pass through $2B$. 
\end{remark}

We now reduce the estimate of $g=f_{j,{\rm trans}}$ to its essential part. By the triangle inequality Lemma \ref{lem: tri}, one has

\[
\begin{split}
\|Eg\|_{BL^p_{k,A}(B_j)}\leq&\|Eg_{\text{ess}}\|_{BL^p_{k,A/2}(B_j)}+\left\|E\big(\sum_{(\zeta_0,v_0)\in \mathbb{T}_{{\rm tail}}}g_{\zeta_0,v_0}\big)\right\|_{BL^p_{k,A/2}(B_j)}\\
\leq&\|Eg_{\text{ess}}\|_{BL^p_{k,A/2}(B_j)}+\left\|E\big(\sum_{(\zeta_0,v_0)\in \mathbb{T}_{{\rm tail}}}g_{\zeta_0,v_0}\big)\right\|_{BL^p_{k,A/2}(X_b)}+\text{RapDec}(R)\|f\|_{L^2}\\
=&\|Eg_{\text{ess}}\|_{BL^p_{k,A/2}(B_j)}+\text{RapDec}(R)\|f\|_{L^2}.
\end{split}
\]In the above, the second step is a consequence of the definition of $\mathbb{T}_{{\rm tail}}$, and the last step follows from (\ref{nonessential}). It thus suffices to study $g_{\text{ess}}$ from this point on. 

Now, we would like to choose a direction, given by a vector $b\in \mathbb{R}^{n-m}$ with $|b|\leq R^{1/2+\delta_m}$, and decompose the $R^{\frac{1}{2}+\delta_m}$-neighborhood of $\mathbf{Z}$ into layers of thickness $\rho^{\frac{1}{2}+\delta_m}$ along $b$. 

Fix a $(\zeta_0,v_0)\in \mathbb{T}_{{\rm ess}}$ and a $R^{1/2+\delta_m}$-ball $B\subset T_{\theta,v}^\ell\cap X_a$ for some $(\theta,v,\ell)\in \mathbb{T}_{\zeta_0,v_0}(y)$, then Lemma \ref{equidistribution physical space} applies to $g_{\zeta_0,v_0}$ and $B$. 

Recall that the ball $B$ determines locally a tangent space to $\mathbf{Z}$, denoted by $V$, and a vector space $V^+$ that contains points on the cone with normal direction parallel to $V$. Since $B\subset X_a$, according to Lemma \ref{case}, one has ${\rm Angle}(V,W)>K^{-2}$, where $W$ is the orthogonal complement of $V^+$ in $\mathbb{R}^n$. Choose any $b\in W$ with $|b|\leq R^{1/2+\delta_m}$, then the direction $b$ is transversal to $T_x\mathbf{Z}$ for all $x\in B\cap \mathbf{Z}$. In fact, by a simple reduction, one can assume without loss of generality that $b$ is transversal (by an angle at least  $K^{-2}$) to $T_{x}\mathbf{Z}$ for all points $x\in \mathbf{Z}\cap B_j$. 

Indeed, let $\Lambda$ be a $K^{-3}$-net of all directions in $\mathbb{R}^n$, then there are $O(K^{3(n-1)})$ many points (directions) in $\Lambda$. Decompose $N_{R^{1/2+\delta_m}}(\mathbf{Z}) \cap 2B_j= \bigcup U_s$ into $O(K^{3(n-1)})$ many disjoint parts, such that for each $x\in U_s\cap \mathbf{Z}$, the normal direction of $T_x\mathbf{Z}$ is $O(K^{-3})$ close to a point in $\Lambda$. The disjointness of $\left\{U_s\right\}_s$ and triangle inequality imply that 
\[
\|Ef\|_{BL^p_{k,A}(B_j)}^p =\sum_{s} \|Ef\|_{BL^p_{k,A}(U_s)}^p \leq \sum_{s}\|Ef_s\|_{BL^p_{k,A/2}(U_s)} + \text{RapDec}(R)\|f\|_{L^2}^p.
\]Here $Ef_{s} :=\sum_{T_{\theta, v}^\ell\cap U_s \neq \emptyset} Ef_{\theta,v}^\ell$, hence there is rapid decay of $|Ef-Ef_{s}|$ on $U_s$. It thus suffices to study each $Ef_s$ as there are only $O(K^{3(n-1)})\ll R^\epsilon$ many of them in total. 

After this reduction, in the following, independently of the ball $B$, the choice of the vector $b$ will be fixed, as it is transversal to the tangent plane $T_x\mathbf{Z}$ for all $x\in \mathbf{Z}\cap X_a$. Our goal is to show that the $L^2$ norm of $g_{{\rm ess}}$ is equidistributed along the direction $b$ in $N_{R^{1/2+\delta_m}}(\mathbf{Z})\cap B_j$.


Note that in the paraboloid setting dealt with in \cite{Guth2}, one can choose $b$'s freely in each $B$, since equidistribution in the physical space (Lemma 6.2 of \cite{Guth2}, the analog of our Lemma \ref{equidistribution physical space}) holds true on each $B$. We unfortunately do not have the luxury with the cone. In fact, it can be as bad that only one $B$ here has equidistribution. A key observation is that this is already good enough for us. Essentially speaking, the equidistribution of $Eg_{\text{ess}}$ in the physical space concluded in Lemma \ref{equidistribution physical space} will give rise to that of $\|g_{\text{ess}}\|_{L^2}$ in the frequency space, and after breaking $g_{\text{ess}}$ down into the orthogonal pieces $\{g_{\zeta_0,v_0}\}$, the behavior of $Eg_{\zeta_0,v_0}$ inside $B$ will control $\|g_{\zeta_0,v_0}\|_{L^2}$. We state this last observation as the following lemma, which is borrowed from the paraboloid case: Lemma 3.4 of \cite{Guth2}. Being a direct corollary of orthogonality of wave packets and Plancherel, it works in the cone case equally well. 

\begin{lemma}\label{orthog}
Suppose that $h$ is a function concentrated on a set of wave packets $\mathbb{T}$ and that for every $T^\ell_{\theta,v}\in\mathbb{T}$, $T_{\theta,v}^\ell\cap B_r(z)\neq\emptyset$ for some radius $r\geq R^{1/2+\delta_m}$. Then
\[
\|Eh\|_{L^2(B_{10r}(z))}^2\sim r\|h\|_{L^2}^2.
\]
\end{lemma}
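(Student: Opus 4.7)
The plan is to derive the two-sided bound by pairing the Plancherel identity for the cone extension operator (valid for any $f$, irrespective of wave packet structure) with the approximate orthogonality of the wave packet decomposition on the ball $B_{10r}(z)$.

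The upper bound requires no hypothesis on $\mathbb{T}$. For each fixed $x_n$, the function $Ef(\cdot,x_n)$ is, up to a constant, the inverse Fourier transform in $\xi$ of $e^{ix_n|\xi|}f(\xi)\mathbf{1}_{\text{annulus}}(\xi)$, so Plancherel in $x'$ gives $\int_{\mathbb{R}^{n-1}}|Ef(x',x_n)|^2\,dx'\sim\|f\|_{L^2}^2$. Integrating over the $20r$-thick slab $\{|x_n-z_n|\leq 10r\}\supset B_{10r}(z)$ yields $\|Ef\|_{L^2(B_{10r}(z))}^2\lesssim r\|f\|_{L^2}^2$ directly.

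For the lower bound, I would first decouple the wave packets: the sectors $\theta$ are $R^{-1/2}$-separated on the cone, while a smooth cutoff to $B_{10r}(z)$ smears Fourier support at scale $r^{-1}\leq R^{-1/2-\delta}$, strictly finer than this separation. A standard Plancherel argument in the frequency variable then yields the approximate physical-side orthogonality
\[
\|Ef\|_{L^2(B_{10r}(z))}^2\approx\sum_{(\theta,v,\ell)\in\mathbb{T}}\|Ef_{\theta,v}^\ell\|_{L^2(B_{10r}(z))}^2.
\]
For an individual wave packet, the slab identity above applied to $f_{\theta,v}^\ell$ together with the local constancy of $|Ef_{\theta,v}^\ell|$ on the tube $T_{\theta,v}^\ell$ pins down the average squared modulus on the tube as $\sim\|f_{\theta,v}^\ell\|_{L^2}^2/\mathrm{vol}(T^{\perp})$, where $T^\perp$ denotes the tube cross-section. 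The hypothesis $T_{\theta,v}^\ell\cap B_r(z)\neq\emptyset$ together with the tube length $R\geq r$ then forces $T_{\theta,v}^\ell\cap B_{10r}(z)$ to be a sub-tube of length $\sim r$, giving $\|Ef_{\theta,v}^\ell\|_{L^2(B_{10r}(z))}^2\sim r\|f_{\theta,v}^\ell\|_{L^2}^2$. Summing and using the frequency-side orthogonality $\|f\|_{L^2}^2\approx\sum\|f_{\theta,v}^\ell\|_{L^2}^2$ closes the lower bound.

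The only subtle point is the physical-side orthogonality on $B_{10r}(z)$: since different wave packets can overlap geometrically along shared tube regions, one must transfer to the Fourier side. The exact role of the hypothesis $r\geq R^{1/2+\delta}$ is to permit this transfer via the uncertainty principle (a cutoff at scale $r$ smears frequency at scale below the $R^{-1/2}$ sector separation), paralleling the paraboloid argument in [Guth2, Lemma 3.4]. Once this orthogonality is granted, the remaining ingredients—the cone Plancherel identity and the geometric bookkeeping on tube–ball intersections—are elementary and structurally identical to the paraboloid analogue, so no cone-specific modification (such as accounting for the mini direction) is required here.
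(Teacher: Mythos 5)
Your overall structure (slab Plancherel for the upper bound, wave-packet orthogonality on $B_{10r}(z)$ plus a single-tube computation for the lower bound) matches the argument the paper is pointing at when it cites Lemma 3.4 of Guth's paraboloid paper and asserts it transfers directly to the cone, and you are right that the thickness-one mini direction drops out since it appears identically in the tube volume and in the tube cross-section.

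The one step where you claim more than your reasoning delivers is the approximate orthogonality $\|Ef\|^2_{L^2(B_{10r}(z))}\approx\sum_{(\theta,v,\ell)}\|Ef_{\theta,v}^\ell\|^2_{L^2(B_{10r}(z))}$. The uncertainty-principle mechanism you describe --- sectors $R^{-1/2}$-separated, cutoff smearing Fourier support by $r^{-1}\leq R^{-1/2-\delta}$ --- kills cross terms only for wave packets attached to \emph{distinct} sectors $\theta_1\neq\theta_2$: after multiplying by a bump on $B_{10r}(z)$, the Fourier support of the localized wave packet is the lift of its sector to $\mathcal{C}$ thickened to scale $r^{-1}$, and these are essentially disjoint for non-adjacent sectors. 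But two wave packets $f_{\theta,v_1}^{\ell_1}$, $f_{\theta,v_2}^{\ell_2}$ with the \emph{same} sector produce localized pieces with \emph{identical} thickened Fourier support, so frequency separation gives nothing for those pairs. Their near-orthogonality on $B_{10r}(z)$ comes instead from the physical-side disjointness (bounded overlap) of the parallel tubes $T_{\theta,v_1}^{\ell_1}$, $T_{\theta,v_2}^{\ell_2}$ inside $B_R$, which for fixed $\theta$ are translates separated either in $v$ or in the mini index $\ell$. Your remark that ``since different wave packets can overlap geometrically along shared tube regions, one must transfer to the Fourier side'' therefore has it inverted for within-sector pairs: those never share tube regions, and the Fourier side is of no help for them. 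Spelling out this two-scale structure (Fourier disjointness across sectors, geometric disjointness within a sector) closes the argument; as written, the claimed orthogonality does not follow from the frequency-side reasoning alone.
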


This, together with Remark \ref{rmk: B}, immediately implies that for any $B\subset X_a$ such that $B\cap T_{\theta,v}^\ell\neq\emptyset$, where $(\theta,v,\ell)\in \mathbb{T}_{\zeta_0,v_0}(y)$ for some $(\zeta_0,v_0)\in \mathbb{T}_{{\rm ess}}$, there holds
\begin{equation}\label{wholepiece}
\|g_{\zeta_0,v_0}\|_{L^2}^2\sim R^{-1/2-\delta_m}\|Eg_{\zeta_0,v_0}\|_{L^2(40B)}^2.
\end{equation}

Along the direction of $b$, we decompose $N_{R^{1/2+\delta_m}}(Z)\cap B_j$ into layers of thickness $\sim \rho^{1/2+\delta_m}$. More precisely, choose a set of vectors $\mathcal{B}=\{b\}$ with $|b|\leq R^{1/2+\delta_m}$ such that $\{N_{\rho^{1/2+\delta_m}}(\mathbf{Z}+b)\cap B_j\}$ form a disjoint union of $N_{R^{1/2+\delta_m}}(\mathbf{Z})\cap B_j$. Since our goal is to induct on the radius, we now look at the small wave packet decomposition of $g$ in $B_j$ at scale $\rho$ and study how the small wave packets are distributed among different layers.

Decompose 
\[
\widetilde{g}= \sum_{(\zeta,w,L)}\widetilde{g}_{\zeta,w}^{L} + \text{RapDec}(\rho) \|f\|_{L^2}.\]Observe that for any $(\zeta,w,L)$ in the wave packet decomposition in $B_j$, if $T_{\zeta,w}^L$ intersects $N_{\rho^{1/2+\delta_m}}(\mathbf{Z}+b)\cap B_j$ for some $b\in\mathcal{B}$, then according to Lemma \ref{lm: small wp2}, $T_{\zeta,w}^L$ is contained in $N_{2\rho^{1/2+\delta_m}}(\mathbf{Z}+b)\cap B_j$ and moreover $T_{\zeta,w}^L$ is $2\rho^{-1/2+\delta_m}$-tangent to $\mathbf{Z}+b$ in $B_j$. Define
\begin{equation}\label{eq: deffb}
\widetilde{\mathbb{T}}_{\mathbf{Z}+b}:=\left\{(\zeta,w,L):\, T_{\zeta,w}^L\,\,\text{is } 2\rho^{-1/2+\delta_m}\text{-tangent to}\,\,\mathbf{Z}+b\,\,\text{in}\,\, B_j\right\},\quad \widetilde{g}_b:=\sum_{(\zeta,w,L)\in\widetilde{\mathbb{T}}_{\mathbf{Z}+b}}\widetilde{g}_{\zeta,w}^L.
\end{equation}Then one has
\[
\widetilde{g}_{{\rm ess}, b}=\sum_{(\zeta_0,v_0)\in \mathbb{T}_{{\rm ess}}}\sum_{(\zeta,w,L)\in \widetilde{\mathbb{T}}_{\zeta_0,v_0}\cap \widetilde{\mathbb{T}}_{\mathbf{Z}+b}}\widetilde{g}_{\zeta,w}^L.
\]For each $b\in\mathcal{B}$, $\widetilde{g}_{{\rm ess}, b}$ is concentrated on wave packets tangent to $\mathbf{Z}+b$ in $B_j$, hence the induction hypothesis at scale $\rho$ applies to $\widetilde{g}_{{\rm ess},b}$. 

The transverse equidistribution property as follows is the main estimate of this subsection.
\begin{lemma}\label{precise}
Let $g_{\text{ess}}$ and $b\in \mathcal{B}$ be defined as above, then
$$\|\widetilde{g}_{\text{ess}, b}\|_{L^2}^2 \leq R^{O(\delta_m)}\big(\frac{R^{1/2}}{\rho^{1/2}}\big)^{-(n-m)} \|g_{\text{ess}}\|_{L^2}^2.$$
\end{lemma}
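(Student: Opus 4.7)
The plan is to deduce the lemma from the per-medium-tube estimate \eqref{equih} by summing over $(\tilde\theta, w)$ with $T_{\tilde\theta,w}\cap X_a\neq\emptyset$, using twice the essential orthogonality of the two wave packet decompositions introduced on $B_R$ and $B_\rho(y)$ respectively. Thus the argument is structural: all the geometric and analytic work (Lemma \ref{case}, Lemma \ref{equidistribution physical space}, Lemma \ref{orthog}, identities \eqref{wholepiece}--\eqref{singlepiece}) has already been absorbed into \eqref{equih}; what remains is the bookkeeping to pass from the single-tube statement to the global one.

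Concretely, I would first unpack the definition of $\tilde f_{\mathrm{ess},b}$. Since $\tilde f$ is just a modulation of $f$ and modulation preserves $L^2$, the essential part decomposes as
\[
\tilde f_{\mathrm{ess}}=\sum_{T_{\tilde\theta,w}\cap X_a\neq\emptyset}\tilde f_{\tilde\theta,w}+\mathrm{RapDec}(\rho)\|f\|_{L^2},
\]
and restricting to the small-scale wave packets tangent to $Z+b$ commutes with this sum (each small tube $\tilde T_{\tilde\theta_1,\tilde v}$ sits inside exactly one medium tube $T_{\tilde\theta,w}$), giving
\[
\tilde f_{\mathrm{ess},b}=\sum_{T_{\tilde\theta,w}\cap X_a\neq\emptyset}\tilde f_{\tilde\theta,w,b}+\mathrm{RapDec}(\rho)\|f\|_{L^2}.
\]
Because the sub-collections $\widetilde{\mathbb T}_{\tilde\theta,w}\cap\widetilde{\mathbb T}_{Z+b}$ are pairwise disjoint for different $(\tilde\theta,w)$, the almost-orthogonality of the $B_\rho(y)$-scale wave packet decomposition yields
\[
\|\tilde f_{\mathrm{ess},b}\|_{L^2}^2\approx\sum_{T_{\tilde\theta,w}\cap X_a\neq\emptyset}\|\tilde f_{\tilde\theta,w,b}\|_{L^2}^2
\]
up to a negligible rapid-decay error.

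I would then plug \eqref{equih} into each summand and pull the uniform factor $R^{O(\delta)}(R^{1/2}/\rho^{1/2})^{-(n-m)}$ outside, obtaining
\[
\|\tilde f_{\mathrm{ess},b}\|_{L^2}^2\lesssim R^{O(\delta)}\Bigl(\tfrac{R^{1/2}}{\rho^{1/2}}\Bigr)^{-(n-m)}\sum_{T_{\tilde\theta,w}\cap X_a\neq\emptyset}\|f_{\tilde\theta,w}\|_{L^2}^2.
\]
A final application of orthogonality on the $B_R$ scale, namely $\|f\|_{L^2}^2\approx\sum_{(\tilde\theta,w)}\|f_{\tilde\theta,w}\|_{L^2}^2$, identifies the partial sum on the right with $\|f_{\mathrm{ess}}\|_{L^2}^2$ and closes the estimate.

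The step that requires the most care is the first one: justifying that the ``essential'' truncation (indexed by medium tubes meeting $X_a$) and the $b$-truncation (indexed by small tubes tangent to $Z+b$) genuinely commute with each orthogonal decomposition, so that applying \eqref{equih} term by term is legitimate. The key input here is the nesting of small wave packets inside medium tubes established via Lemma \ref{neighborhood} — each $\tilde T_{\tilde\theta_1,\tilde v}\subset T_{\tilde\theta,w}$ for a unique $(\tilde\theta,w)$ — together with the fact that any small tube tangent to $Z+b$ sits inside the $2\rho^{1/2+\delta}$-neighborhood of $Z+b$, ensuring that small wave packets associated to medium tubes with $T_{\tilde\theta,w}\cap X_a=\emptyset$ contribute only a $\mathrm{RapDec}$ error, which is absorbed into the $R^{O(\delta)}$ factor.
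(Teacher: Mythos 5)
Your proposal is correct and matches the paper's own proof essentially line for line: both decompose $\|f_{\mathrm{ess}}\|_{L^2}^2$ and $\|\tilde f_{\mathrm{ess},b}\|_{L^2}^2$ via orthogonality into sums over medium tubes meeting $X_a$, then apply the per-tube estimate \eqref{equih} term by term and resum. Your added care in justifying that the essential truncation and the $b$-truncation commute with the medium-tube decomposition (via the nesting of small tubes in medium tubes) is implicit in the paper's two "by orthogonality" identities, so the two arguments are the same in substance.
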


\begin{proof}
For every $(\zeta_0,v_0)\in\mathbb{T}_{{\rm ess}}$, define
\[
\widetilde{g}_{\zeta_0,v_0,b}=\sum_{(\zeta,w,L)\in \widetilde{\mathbb{T}}_{\zeta_0,v_0}\cap \widetilde{\mathbb{T}}_{\mathbf{Z}+b}}\widetilde{g}_{\zeta,w}^L.
\]According to Lemma \ref{lm: small wp g}, $\widetilde{g}_{\zeta_0,v_0,b}$ is concentrated on large wave packets $T_{\theta,v}^\ell$ in $\mathbb{T}_{\zeta_0,v_0}(y)$. Moreover, according to Remark \ref{rmk: B}, there exists some $R^{1/2+\delta_m}$-ball $B\subset X_a$ such that $2B$ intersects all $T_{\theta,v}^\ell\in \mathbb{T}_{\zeta_0,v_0}(y)$. Since modulation doesn't change the $L^2$ norm, applying Lemma \ref{orthog} to $\widetilde{g}_{\zeta_0,v_0,b}$ and $2B$, one obtains
\begin{equation}\label{singlepiece}
\|\widetilde{g}_{\zeta_0,v_0,b}\|_{L^2}^2 \sim R^{-1/2-\delta_m}\|Eg_{\zeta_0,v_0}\|^2_{L^2(20B\cap N_{\rho^{1/2+\delta_m}(\mathbf{Z}+b)})}.
\end{equation}

Recall also from (\ref{wholepiece}) that
\[
\|g_{\zeta_0,v_0}\|_{L^2}^2\sim R^{-1/2-\delta_m}\|Eg_{\zeta_0,v_0}\|_{L^2(40B)}^2.
\]Combining (\ref{singlepiece}), Lemma \ref{equidistribution physical space}, and then (\ref{wholepiece}), we obtain the equidistribution for each $g_{\zeta_0,v_0}$:
\begin{equation}\label{equih}
\|\widetilde{g}_{\zeta_0,v_0,b}\|_{L^2}^2 \leq R^{O(\delta_m)}\big(\frac{R^{1/2}}{\rho^{1/2}}\big)^{-(n-m)}\|g_{\zeta_0,v_0}\|_{L^2}^2.
\end{equation}Note that strictly speaking, Lemma \ref{equidistribution physical space} only applies to $B$ instead of $20B$. However, it is easy to see that the same result holds true if we chose to use a constant dilation of $B$ from the beginning. We omit this technicality.

By orthogonality, one has $$\|g_{\text{ess}}\|_{L^2}^2 \sim \sum_{(\zeta_0,v_0)\in \mathbb{T}_{{\rm ess}}}\|g_{\zeta_0,v_0}\|_{L^2}^2,$$ and
$$\|\widetilde{g}_{\text{ess},b}\|_{L^2}^2 \sim \sum_{(\zeta_0,v_0)\in \mathbb{T}_{{\rm ess}}}\|\widetilde{g}_{\zeta_0,v_0,b}\|_{L^2}^2.$$
Hence the desired estimate follows immediately from (\ref{equih}).
\end{proof}

Now, we can use Lemma \ref{precise} to complete the estimate of the transversal sub-case. This part of the argument, once the above version of the equidistribution estimate is proved, proceeds in the exact same way as the paraboloid case. We provide only a sketch in the following while refer the interested reader to \cite[Section 8.4, page 42-43]{Guth2} for details. 

Recall that it suffices to estimate
\[
\sum_{j}\|Eg_{{\rm ess}}\|^p_{BL^p_{k,\frac{A}{2}}(B_j)}.
\]Note that $g_{{\rm ess}}$ also implicitly depends on $j$.

First, one has for each $B_j$ that
\[
\|Eg_{{\rm ess}}\|^{p}_{BL^{p}_{k, A/2}(B_j)} \lesssim (\log R) \sum_{b\in \mathcal{B}} \|Ef_{j,{\rm trans},b}^{\rm ess}\|^{p}_{BL^{p}_{k, A/2}(B_j)},
\]where $f_{j,{\rm trans},b}^{\rm ess}$ is defined so that $(f_{j,{\rm trans},b}^{\rm ess})^\sim=\widetilde{g}_{{\rm ess}, b}$, i.e. $f_{j,{\rm trans},b}^{\rm ess}=e^{-i\psi_y(\xi)}\widetilde{g}_{{\rm ess}, b}$. Hence, one has
\begin{equation}\label{eqn: last1}
\|Ef\|_{BL^{p}_{k,A}(B_R)}^p\lesssim (\log R )\sum_{j}\sum_{b\in \mathcal{B}} \|Ef_{j,{\rm trans},b}^{\rm ess}\|^{p}_{BL^{p}_{k, A/2}(B_j)}.
\end{equation}

Second, for each $(\zeta_0,v_0)\in \mathbb{T}_{{\rm ess}}$ and each $R^{1/2+\delta_m}$-ball $B\subset X_a$ that intersects all $T_{\theta,v}^\ell$ in $\mathbb{T}_{\zeta_0,v_0}(y)$, the sets $B\cap N_{\rho^{1/2+\delta_m}}(\mathbf{Z}+b)$ for different $b\in \mathcal{B}$ are disjoint. Hence, by (\ref{wholepiece}) and (\ref{singlepiece}), one has 
\[
\sum_{b\in\mathcal{B}}\|\widetilde{g}_{\zeta_0,v_0,b}\|_{L^2}^2\lesssim \|g_{\zeta_0,v_0}\|_{L^2}^2.
\]Since $\widetilde{g}_{{\rm ess}, b}=\sum_{(\zeta_0,v_0)\in \mathbb{T}_{{\rm ess}}} \widetilde{g}_{\zeta_0,v_0,b}$ is an orthogonal decomposition, and so is the decomposition $g_{{\rm ess}}=\sum_{(\zeta_0,v_0)\in \mathbb{T}_{{\rm ess}}}g_{\zeta_0,v_0}$, one has the estimate
\begin{equation}\label{eqn: last2}
\sum_{j}\sum_{b\in \mathcal{B}}\|f_{j,{\rm trans},b}^{{\rm ess}}\|_{L^2}^2=\sum_j\sum_{b\in \mathcal{B}}\|\widetilde{g}_{{\rm ess}, b}\|_{L^2}^2\lesssim  \text{Poly}(D_{\mathbf{Z}})\|g_{{\rm ess}}\|_{L^2}^2 \lesssim_{\epsilon, m}\|g_{{\rm ess}}\|_{L^2}^2  .
\end{equation}


Moreover, by the equidistribution estimate Lemma \ref{precise}, there holds
\begin{equation} \label{trans-equi}
\max_{b\in \mathcal{B}} \|f_{j,{\rm trans},b}^{\rm ess}\|_{L^2}^2
\leq R^{O(\delta_m)} \left(\frac{R^{1/2}}{\rho^{1/2}}\right)^{-(n-m)} \|g_{ess}\|_{L^2}^2.
\end{equation}

By the inductive hypothesis, one has for each $B_j$ that
\[
\begin{split}
\|Ef_{j,{\rm trans}, b}^{\rm ess}\|_{BL^p_{k,A/2}(B_j)}\leq &C(K,\epsilon, m, D_{\mathbf{Z}})\rho^\epsilon \rho^{\delta(\long\bar{A}-\log(A/2))}\rho^{-e+\frac{1}{2}}\|f_{j,{\rm trans},b}^{\rm ess}\|_{L^2}\\
\leq &C(K,\epsilon, m, D_{\mathbf{Z}}) R^\delta \rho^\epsilon R^{\delta(\log\bar{A}-\log A)}\rho^{-e+\frac{1}{2}}\|f_{j,{\rm trans},b}^{\rm ess}\|_{L^2}.
\end{split}
\]Combing the estimates (\ref{eqn: last1}), (\ref{eqn: last2}) and (\ref{trans-equi}) together, one has
\[
\begin{split}
\|Ef\|_{BL^{p}_{k,A}(B_R)}^p\lesssim &(\log R )\sum_{j}\sum_{b\in \mathcal{B}} \|Ef_{j,{\rm trans},b}^{\rm ess}\|^{p}_{BL^{p}_{k, A/2}(B_j)}\\
\lesssim & R^{O(\delta)}\Big( C(K,\epsilon, m, D_{\mathbf{Z}}) \rho^\epsilon R^{\delta(\log\bar{A}-\log A)}\rho^{-e+\frac{1}{2}} \Big)^p \sum_{j,b}\|f_{j,{\rm trans},b}^{\rm ess}\|_{L^2}^p\\
\lesssim & R^{O(\delta_m)} \Big( C(K,\epsilon, m, D_{\mathbf{Z}}) \rho^\epsilon R^{\delta(\log\bar{A}-\log A)}\rho^{-e+\frac{1}{2}} \Big)^p\left(\frac{R^{1/2}}{\rho^{1/2}}\right)^{-(n-m)(\frac{p}{2}-1)} \|f\|_{L^2}^p.
\end{split}
\]
When $p=p(m,k)$ as defined in Theorem \ref{theorem8.1}, there holds
\[
(\rho^{-e+\frac12})^p\left(\frac{R^{1/2}}{\rho^{1/2}}\right)^{-(n-m)(\frac{p}{2}-1)}=(R^{-e+\frac12})^p,
\]hence
\[
\|Ef\|_{BL^{p}_{k,A}(B_R)}^p\leq C_{\epsilon,D_{\mathbf{Z}}}R^{O(\delta_m)}(R/\rho)^{-\epsilon}\left(C(K,\epsilon,m,D_{\mathbf{Z}})R^\epsilon R^{\delta(\log\bar{A}-\log A)}R^{-e+\frac{1}{2}}\right)^p\|f\|_{L^2}^p.
\]Note that $R/\rho=R^{O(\delta_{m-1})}$. By choosing the parameters so that $\delta_m\ll \epsilon \delta_{m-1}$, one can have $(R/\rho)^{-\epsilon}$ dominate the other terms, thus $C_{\epsilon,D_{\mathbf{Z}}}R^{O(\delta_m)}(R/\rho)^{-\epsilon}\leq 1$. The induction in the transverse sub-case is closed, and we have completed the proof of Theorem \ref{theorem8.1}.

\section{$k$-broad estimate implies linear restriction}\label{linearmain}
\subsection{$L^p\to L^p$ restriction}\label{prescale}
In this subsection, we demonstrate how Theorem \ref{kBroadThm}, the $k$-broad estimate, implies the main Theorem \ref{main}, the linear cone restriction estimate. The first ingredient of the argument is a decoupling inequality for the cone derived by Bourgain and Demeter \cite{BD}, and the second one is an induction on scales argument. The main difference of the proof from the paraboloid case lies in the second step, where a Lorentz rescaling is applied to the cone.

More precisely, we are going to prove for any $R>1$ and $p\leq q\leq\infty$ that
\begin{equation}\label{linear}
\|Ef\|_{L^p(B_R)}\lesssim_\epsilon R^\epsilon\|f\|_{L^q},
\end{equation}whenever there holds the $k$-broad estimate
\[
\|Ef\|_{BL^p_{k,A}(B_R)}\lesssim_{K,\epsilon}R^\epsilon\|f\|_{L^q}
\]for $p$ in the range
\[
p(k,n)< p\leq\frac{2n}{n-2},\quad p(k,n):=\begin{cases} 2\cdot\frac{n-1}{n-2}&\text{if}\,\,2\leq k\leq 3,\\ 2\cdot\frac{2n-k+1}{2n-k-1}&\text{if}\,\, k>3. \end{cases}
\]
The upper bound of the range for $p$ comes from the requirement in the decoupling theorem below. Note that the lower bound $p(k,n)$ is different from the critical index $\bar{p}(k,n)=2\cdot\frac{n+k}{n+k-2}$ in Theorem \ref{kBroadThm}. We claim that (\ref{linear}) (with $p$ in the range above) implies Theorem \ref{main} immediately. Indeed, taking $k=\frac{n+1}{2}$ when $n$ is odd and $k=\frac{n}{2}+1$ when $n$ is even, $\max(p(k,n),\bar{p}(k,n))$ gives the lower bound for $p$ in Theorem \ref{main}. Then, Theorem \ref{main} follows by interpolating with the trivial $L^\infty$ bound of $E$ and $\epsilon$-removal (\cite{Tao1}).

We now begin the proof of (\ref{linear}). The $k$-broad estimate assumption says that
\[
\sum_{B_{K^2}\subset B_R}\min_{V_1,\ldots,V_A}\max_{\tau\notin V_a}\int_{B_{K^2}}|Ef_\tau|^p\leq C(K,\epsilon)R^{p\epsilon}\|f\|_{L^q}^p,
\]where $V_1,\ldots V_A$ are $(k-1)$-planes and we have used the abbreviation $\tau\notin V_a$ to denote $\text{Angle}(G(\tau),V_a)>K^{-2}$, $a=1,\ldots,A$. For each $B_{K^2}$, fix a choice of $V_1,\ldots,V_A$ so that the minimum above is attained. Then,
\begin{equation}\label{eq: va}
\int_{B_{K^2}}|Ef|^p\lesssim K^{O(1)}\max_{\tau\notin V_a}\int_{B_{K^2}}|Ef_{\tau}|^p+\sum_{a=1}^A\int_{B_{K^2}}\big|\sum_{\tau\in V_a}Ef_\tau\big|^p,
\end{equation}
where the first term can be bounded using the $k$-broad estimate, while the second term will be handled by the cone decoupling theorem of Bourgain and Demeter, which in our notation states the following. 
\begin{theorem}[\cite{BD}]\label{thm: decoupling}
Assume $\text{supp}\hat{f}\subset N_{K^{-2}}(\mathcal{C})$, the $K^{-2}$-neighborhood of the truncated cone $\mathcal{C}\subset\mathbb{R}^n$. Then on any ball $B_{K^2}$ of radius $K^2$, for any $\delta>0$,
\[
\|f\|_{L^p(B_{K^2})}\lesssim_\delta K^{\delta}\left(\sum_{\theta\in\mathcal{P}_{K^{-2}}(\mathcal{C})}\|f_\theta\|_{L^p(W_{B_{K^2}})}^2\right)^{1/2},\quad \forall 2\leq p\leq \frac{2n}{n-2},
\]where $\mathcal{P}_{K^{-2}}(\mathcal{C})$ is a partition of $N_{K^{-2}}(\mathcal{C})$ into sectors $\theta$ of dimensions $1\times K^{-1}\times\cdots\times K^{-1}\times K^{-2}$, $f=\sum_{\theta}f_\theta$ such that $\widehat{f_\theta}=\hat{f}\chi_\theta$, and $W_{B_{K^2}}$ is a weight approximately equaling to $1$ on $B_{K^2}$ and rapidly decaying outside.
\end{theorem}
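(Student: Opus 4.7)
The plan is to establish the $\ell^2$-decoupling for the cone by the Bourgain--Demeter iterative bootstrap. Let $\mathrm{Dec}_n(K,p)$ denote the best constant for which the stated inequality holds. The goal is to prove $\mathrm{Dec}_n(K,p) \lesssim_\delta K^\delta$ for every $\delta>0$ and every $p$ in the claimed range. First I would reduce to the critical endpoint $p = 2n/(n-2)$: the case $p=2$ is immediate from Plancherel together with the essential disjointness of the Fourier supports of the $f_\theta$, while the intermediate exponents follow by interpolation, so the whole range $2\le p\le 2n/(n-2)$ is controlled once the endpoint is established.

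The heart of the argument is a two-scale decomposition combined with a lower-dimensional decoupling. Introduce an intermediate angular scale $K^{-1}<\nu<1$ and partition $\mathcal{N}_{K^{-2}}(\mathcal{C})$ into fatter sectors $\sigma$ of dimensions $1\times\nu\times\cdots\times\nu\times K^{-2}$, with each thin sector $\theta$ sitting inside a unique $\sigma$. To decouple $f=\sum_\sigma f_\sigma$, I exploit the fact that each slice $\{\xi_n = \text{const}\}$ of the cone is, up to an affine change of coordinates, a sphere in $\mathbb{R}^{n-1}$ whose $K^{-2}$-neighborhood locally coincides with a piece of paraboloid. Applying the Bourgain--Demeter $\ell^2$-decoupling for the paraboloid in $\mathbb{R}^{n-1}$ slice-by-slice at its own critical index (which matches $2n/(n-2)$), and integrating in the radial direction, yields
\[
\|f\|_{L^p(B_{K^2})} \lesssim_\delta K^{\delta/2}\Big(\sum_\sigma \|f_\sigma\|_{L^p(W_{B_{K^2}})}^2\Big)^{1/2}.
\]

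Next, I apply Lorentz rescaling, the defining symmetry of the cone: each fat sector $\sigma$ is mapped by a suitable Lorentz boost to a unit-size sector of $\mathcal{C}$, with the thin sub-sectors $\theta\subset\sigma$ becoming a scale-$K^{-1}/\nu$ partition of the full cone, and the ball $B_{K^2}$ transforming to an anisotropic region covered by round balls of radius $\sim (K\nu)^2$. The induction hypothesis applied at the smaller scale $K\nu$ then controls each $\|f_\sigma\|_{L^p(W_{B_{K^2}})}$ in terms of its thin sub-plates with decoupling cost $\mathrm{Dec}_n(K\nu,p) \lesssim_\delta (K\nu)^\delta$. Combining with the lower-dimensional step yields a recursive inequality of the shape $\mathrm{Dec}_n(K,p) \le C_\delta K^{\delta/2}\,\mathrm{Dec}_n(K\nu,p)$, and iterating it finitely many times (with $\nu$ chosen as a small positive power of $K$) closes the bootstrap at every $\delta>0$.

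The main obstacle will be handling the anisotropy introduced by Lorentz rescaling: the boosts deform the round balls $B_{K^2}$ into stretched ellipsoids, and one must verify through a weighted reformulation and an appropriate covering argument that passing between round balls and their Lorentz-deformed images costs only a negligible constant. A secondary technical point is the uniformity of the paraboloid decoupling on slices as $\xi_n$ varies; since the cone is homogeneous in the radial direction, this can be absorbed by a scaling argument into the implicit constants.
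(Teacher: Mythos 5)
The paper does not prove this theorem at all: it is quoted directly from Bourgain and Demeter \cite{BD} and used as a black box, so there is no ``paper's proof'' to compare against. Your proposal therefore needs to stand on its own as a proof of the cone decoupling inequality, and as written it has a real gap.

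The problematic step is ``applying the Bourgain--Demeter $\ell^2$-decoupling for the paraboloid in $\mathbb{R}^{n-1}$ slice-by-slice at its own critical index \dots and integrating in the radial direction.'' Slicing the frequency support $\mathcal{N}_{K^{-2}}(\mathcal{C})$ at $\{\xi_n = c\}$ does give a $K^{-2}$-neighborhood of the sphere $c\,S^{n-2}$, but the $L^p$ norm of $f$ on $B_{K^2}$ does not fiber along these frequency slices: writing $f = \int f_c\,dc$ with $\hat f_c$ supported on the slab $\{\xi_n \approx c\}$ and applying sphere decoupling to each $f_c$ produces a sum over the $\sim K^2$ values of $c$ that you can only recombine by the triangle inequality, losing the decisive factor. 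The reason the Fubini trick works for a cylinder over a paraboloid (which is the actual device in \cite{BD}) is that there the extra direction is a genuine frequency--physical product: writing $\hat f \subset \mathcal{N}_\delta(P^{n-2}) \times \mathbb{R}$, the function $\bar x \mapsto f(\bar x, x_n)$ has Fourier support in $\mathcal{N}_\delta(P^{n-2})$ for every fixed $x_n$, so one may apply paraboloid decoupling in $\mathbb{R}^{n-1}$ pointwise in $x_n$ and close via Minkowski. The cone is not such a product: a sector of $\mathcal{C}$ only becomes (approximately) a cylinder over a paraboloid cap after a Lorentz transformation, with an error that shrinks as the sector narrows. This is exactly why the multi-scale bootstrap with Lorentz rescaling that you gesture at is indispensable, but it must be organized around the cylinder approximation on thin sectors rather than around a global slice-by-slice application of sphere decoupling. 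Since your lower-dimensional input step does not produce the claimed intermediate inequality, the recursion $\mathrm{Dec}_n(K,p) \le C_\delta K^{\delta/2} \mathrm{Dec}_n(K\nu,p)$ is not established, and the argument does not close. You have also deferred the anisotropy of the Lorentz maps and the uniformity across slices as ``obstacles'' without resolving them; in the correct argument these are handled precisely by working locally on parabolic rescalings of $B_{K^2}$ and by the affine invariance of decoupling, and they are where much of the real content of \cite{BD}, Section~8 lives.
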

Applying this theorem to the second term followed by H\"older's inequality, for the subspaces $V_a$ as in \eqref{eq: va} one obtains
\[
\int_{B_{K^2}}\big|\sum_{\tau\in V_a}Ef_\tau\big|^p\lesssim_\delta K^\delta\max(1,K^{(k-3)(p/2-1)})\sum_{\tau\in V_a}\int W_{B_{K^2}}|Ef_\tau|^p,
\]where we have observed that the number of $\tau\in V_a$ is $\lesssim \max(1, K^{k-3})$. 

Indeed, by definition, $\tau\in V_a$ means that the angle between $\tau$ and $V_a$ is less than $K^{-2}$. By Lemma 2.2 in \cite{Harris}, $\tau \cap S^{n-1}$ lies in a $C_n K^{-1}$-- neighborhood of  $ V_a \cap \mathcal{C}\cap S^{n-1}$. 
Note that $\mathcal{C}\cap S^{n-1}$ is an $(n-2)$--dimensional sphere, which we denote  by $S^{n-2}$.  Recall that $\dim V_a =k-1$, the upper bound $K^{k-3}$ then follows from the fact that $V_a \cap S^{n-2}$ has dimension $\leq k-3$. 
This is a unique feature of the cone, which is why in the definition of the broad norm, we chose to define the angle between $\tau$ and $V_a$ to be less than $K^{-2}$, in contrast to the paraboloid case where $K^{-1}$ is used.

%


Next, summing over $B_{K^2}\subset B_R$, $a=1,\ldots,A$, one has
\[
\sum_{B_{K^2}\subset B_R}\sum_{a=1}^A\int_{B_{K^2}}\big|\sum_{\tau\in V_a}Ef_\tau\big|^p\lesssim K^\delta\max(1,K^{(k-3)(p/2-1)})\sum_\tau\int W|Ef_\tau|^p,
\]where $W:=\sum_{B_{K^2}\subset B_R} W_{B_{K^2}}$ satisfies $W\lesssim 1$ on $B_{2R}$ and rapidly decays outside $B_{2R}$. Hence, combining with the $k$-broad estimate,
\begin{equation}\label{long}
\int_{B_R}|Ef|^p\leq C(K,\epsilon)R^{p\epsilon}\|f\|_{L^q}^p+CK^\delta\max(1, K^{(k-3)(p/2-1)})\sum_\tau\int_{B_{2R}}|Ef_\tau|^p,
\end{equation}from which we are going to prove by induction on the radius that
\begin{equation}\label{linearInd}
\int_{B_R}|Ef|^p\leq \bar{C}(\epsilon)R^{p\epsilon}\|f\|_{L^q}^p.
\end{equation}

This is obviously true when $R=1$ by the trivial $L^\infty$ bound of $E$. Assume now that (\ref{linearInd}) holds for radii less than $R/2$. We apply Lorentz rescaling to handle the contribution of each $f_\tau$. To do this, we first observe that our desired estimate (\ref{linearInd}) is preserved under rotations. To see this, it is easier to work with the ``lift'' of the functions $f$ on $\mathbb{R}^{n-1}$ onto the cone. For any $f\in L^{q}(2\bar{B}^{n-1}\setminus B^{n-1})$, define $F(\xi)=f(\bar\xi)$ as a function supported on cone $\mathcal{C}$, then
\[
\|F\|_{L^q(d\sigma_{\mathcal{C}})}=\|f\|_{L^q(2\bar{B}^{n-1}\setminus B^{n-1})}
\]where $d\sigma_{\mathcal{C}}$ is the pull back of the Lebesgue measure on $\mathbb{R}^{n-1}$ under the projection $\xi\mapsto\bar\xi$, and (\ref{linearInd}) can be rephrased as
\begin{equation}\label{liinearIndLift}
\|\widehat{F d\sigma_{\mathcal{C}}}\|_{L^p(B_R)}\leq \bar{C}(\epsilon)R^{p\epsilon}\|F\|_{L^q(d\sigma_{\mathcal{C}})}^p.
\end{equation}
\begin{lemma}\label{rotation invariant}
Let $F$ be a function supported on the cone $\mathcal{C}$, and $A$ be any rotation in $\mathbb{R}^n$, then the following two inequalities are equivalent:
\begin{enumerate}
\item $\|\widehat{F d\sigma_{\mathcal{C}}}\|_{L^p(\Omega)} \leq X\|F\|_{L^q(d\sigma_{\mathcal{C}})}$
\item $\|\widehat{F(A^{-1}\cdot) d\sigma_{A(\mathcal{C})}}\|_{L^p(A(\Omega))} \leq X\|F(A^{-1}\cdot)\|_{L^q(d\sigma_{A(\mathcal{C})})}$
\end{enumerate}for any set $\Omega\subset \mathbb{R}^n$.
\end{lemma}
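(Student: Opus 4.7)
The proof is a direct computation showing that both sides of (1) agree termwise with the corresponding sides of (2) under the rotation $A$. The key facts I would use are that rotations are isometries of Lebesgue measure, that they intertwine with the Fourier transform in a covariant way, and that the surface measure $d\sigma_{\mathcal{C}}$ transforms by pushforward to $d\sigma_{A(\mathcal{C})}$ under $A$.

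More precisely, the plan is to compute the extension operator on the rotated cone. Set $G := F(A^{-1}\cdot)$, which is supported on $A(\mathcal{C})$. Writing out
\[
\widehat{G\,d\sigma_{A(\mathcal{C})}}(x) = \int_{A(\mathcal{C})} F(A^{-1}\eta)\,e^{-ix\cdot\eta}\,d\sigma_{A(\mathcal{C})}(\eta),
\]
and making the substitution $\eta=A\xi$ (which maps $A(\mathcal{C})$ back to $\mathcal{C}$ and satisfies $d\sigma_{A(\mathcal{C})}(A\xi)=d\sigma_{\mathcal{C}}(\xi)$ because $A$ is an isometry), I would obtain
\[
\widehat{G\,d\sigma_{A(\mathcal{C})}}(x)=\int_{\mathcal{C}} F(\xi)\,e^{-i(A^{-1}x)\cdot\xi}\,d\sigma_{\mathcal{C}}(\xi)=\widehat{F\,d\sigma_{\mathcal{C}}}(A^{-1}x),
\]
using $A^T=A^{-1}$ for a rotation.

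Given this intertwining identity, the two norms appearing on the left-hand side of (2) are transformed into those of (1) by the change of variable $x=Ay$ in the $L^p$ integral over $A(\Omega)$: since $A$ preserves Lebesgue measure,
\[
\|\widehat{G\,d\sigma_{A(\mathcal{C})}}\|_{L^p(A(\Omega))}^p=\int_{A(\Omega)}|\widehat{F\,d\sigma_{\mathcal{C}}}(A^{-1}x)|^p\,dx=\|\widehat{F\,d\sigma_{\mathcal{C}}}\|_{L^p(\Omega)}^p.
\]
An identical substitution on the frequency side yields $\|G\|_{L^q(d\sigma_{A(\mathcal{C})})}=\|F\|_{L^q(d\sigma_{\mathcal{C}})}$.

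Since both sides of (2) literally coincide with the corresponding sides of (1), the two inequalities are equivalent. I don't anticipate any real obstacle here: the only point to be careful about is the definition of the pulled-back surface measure $d\sigma_{A(\mathcal{C})}$ and verifying it behaves correctly under $A$, but this is immediate from the fact that the projection $\xi\mapsto\bar\xi$ used to define $d\sigma_{\mathcal{C}}$ transforms covariantly under an isometry of $\mathbb{R}^n$ (equivalently, one may take $d\sigma_{A(\mathcal{C})}$ to be defined so that $A_*d\sigma_{\mathcal{C}}=d\sigma_{A(\mathcal{C})}$, which is the natural choice).
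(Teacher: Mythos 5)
Your proof is correct and is a fully spelled-out version of the paper's one-line argument, which simply invokes change of variables together with the fact that the Jacobian of a rotation is $1$. The one subtlety you rightly flag at the end — that $d\sigma_{\mathcal{C}}$ is defined as the pullback of Lebesgue measure under $\xi\mapsto\bar\xi$ rather than intrinsic surface measure, so $A_* d\sigma_{\mathcal{C}} = d\sigma_{A(\mathcal{C})}$ should be taken as a definition — matches the paper's own convention ("$d\sigma_{\mathcal{T}}$ is the push forward of $d\sigma_{\mathcal{C}}$ under the rotation $A$").
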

\begin{proof}
By change of variables, since the Jacobian of the rotation is $1$, the left hand side of both inequalities are the same, so as the right hand side. 
\end{proof}

Now we start estimating each $f_\tau$, or equivalently, its lift $F_\tau$. We slightly abuse notation by using $\tau$ to denote both the sector in $2\bar{B}^{n-1}\setminus B^{n-1}$ and its lift onto the cone at the same time. For a $\tau$ fixed, by symmetry of the cone, there is no loss of generality to assume that the central line of $\tau$ is in the first quadrant of the $(\xi_{n-1},\xi_n)$-plane. (This can be achieved through a rotation fixing the $\xi_n$-axis, mapping $\mathcal{C}$ to itself and the central line of $\tau$ into the $(\xi_{n-1},\xi_n)$-plane, combined with Lemma \ref{rotation invariant}.) Next, we want to find a rotation $A$ sending the central line of $\tau$ to be lying on the positive half of the $\xi_{n-1}$-axis. In fact, $A$ is exactly the volume conserving linear transformation mapping $\xi=(\bar{\xi},\xi_n)$ to $\omega=(\bar{\omega}, \omega_n)$ such that $\xi_{n-1}=(\omega_{n-1}-\omega_n)/\sqrt{2}$, $\xi_n=(\omega_{n-1}+\omega_n)/\sqrt{2}$ and $\xi_j=\omega_j$, $j=1\ldots, n-2$, under which the original vertical cone
\[
\mathcal{C}=\left\{\xi\in\mathbb{R}^n:\,\xi_1^2+\cdots+\xi_{n-1}^2=\xi_n^2, \,\xi_n>0, \,1\leq\xi_j\leq 2, \,\forall 1\leq j\leq n-1\right\}
\]is mapped to the tilted cone
\[
{\mathcal{T}}=\left\{\omega\in\mathbb{R}^n:\, \omega_1^2+\cdots+\omega_{n-2}^2=2\omega_{n-1}\omega_n,\, \sqrt{2}\leq\omega_{n-1}\leq 2\sqrt{2},\, 1\leq\omega_j\leq 2, \,\forall 1\leq j\leq n-2\right\}.\]By a change of variable, 
\[
\widehat{F_\tau d\sigma_{\mathcal{C}}}(x)=\widehat{G_{\tau} d\sigma_{\mathcal{T}}}(y), \quad G_\tau(\omega):=F_\tau(A^{-1}(\omega))=F_\tau(\xi), \quad y:=Ax,
\]where $d\sigma_{\mathcal{T}}$ is the pushforward of $d \sigma_{\mathcal{C}}$ under the rotation $A$.

We are now ready to apply rescaling. Introduce a new coordinate $\tilde\omega$ such that
\[
\tilde\omega_j=K\omega_j,\,\forall 1\leq j\leq n-2,\quad \tilde\omega_{n-1}=\omega_{n-1},\quad \tilde\omega_n=K^2\omega_n,
\]then one has
\[
\omega_1y_1+\cdots+\omega_{n-1}y_{n-1}+\frac{\omega_1^2+\cdots+\omega_{n-2}^2}{2\omega_{n-1}}y_{n}=\tilde\omega_1\tilde{y}_1+\cdots +\tilde\omega_{n-1}\tilde{y}_{n-1}+\frac{\tilde{\omega}_1^2+\cdots+\tilde{\omega}_{n-2}^2}{2\tilde{\omega}_{n-1}}\tilde{y}_n
\]where
\[
\tilde{y}_j=K^{-1}y_j,\,\forall 1\leq j\leq n-2,\quad \tilde{y}_{n-1}=y_{n-1},\quad \tilde{y}_n=K^{-2}y_n.
\]Observing that $\{\tilde\omega:\,\omega\in\tau\}$ is contained in a constant dilation of the tilted cone $\mathcal{T}$, say, $5\mathcal{T}$, there holds
\[
|\widehat{G_\tau d\sigma_{\mathcal{T}}}(y)|=K^{-(n-2)}|\widehat{G'_\tau d\sigma_{5\mathcal{T}}}(\tilde{y})|,
\]where $G'_\tau (\tilde\omega)$ is a function on the dilated cone $5\mathcal{T}$ such that $G'_\tau (\tilde\omega)=G_\tau (\omega)$ on the dilated $A\tau$ and $0$ elsewhere. We then apply $A^{-1}$ to rotate $5\mathcal{T}$ back to the vertical position, which leads to
\[
\widehat{G'_\tau d\sigma_{5\mathcal{T}}}(\tilde{y})=\widehat{G'_\tau(A\cdot) d \sigma_{5\mathcal{C}}}(A^{-1}\tilde{y}).
\]

We now end up with a restriction problem on $5\mathcal{C}$, while in the physical space the linear transformation has sent the ball $B_R$ into a tube of dimension $RK^{-1}\times\cdots\times RK^{-1}\times RK^{-2}\times R$. There is still an obstruction preventing us from directly using the induction assumption: this tube is not contained in a ball of radius less than $R/2$. Fortunately, this can be easily overcome by covering the tube with no more than $C_0$ balls of radius $R/C_0$, where $1<C_0 \ll K\ll R$. 
 For each ball $B_{R/C_0}$, one can also assume that it is centered at the origin, as translation in the physical space corresponds to modulation in the frequency space which doesn't change the $L^q$ norm. By the symmetry of $B_{R/C_0}$ and Lemma \ref{rotation invariant}, the induction assumption implies that
\[
\|\widehat{G'_\tau d\sigma_{5\mathcal{T}}}\|^p_{L^p(B_{R/C_0})}\leq\bar{C}(\epsilon)R^{p\epsilon}C_0^{-p\epsilon}\|G'_\tau\|^p_{L^q(d\sigma_{5\mathcal{T}})}=\bar{C}(\epsilon)R^{p\epsilon}C_0^{-p\epsilon}K^{(n-2)\frac{p}{q}}\|G_\tau\|^p_{L^q(d\sigma_{\mathcal{T}})}.
\]Then, collecting the equalities above,
\begin{equation}\label{ftau}
\begin{split}
\int_{B_{2R}}|Ef_\tau|^p&=\|\widehat{Fd\sigma_{\mathcal{C}}}\|^p_{L^p(B_{2R})}=K^{n-(n-2)p}\sum_{B_{R/C_0}}\|\widehat{G'_\tau d\sigma_{5\mathcal{T}}}\|^p_{L^p(B_{R/C_0})}\\
&\leq \bar{C}(\epsilon)R^{p\epsilon}C_0^{1-p\epsilon}K^{n-(n-2)p+(n-2)\frac{p}{q}}\|G_\tau\|^p_{L^q(d\sigma_{\mathcal{T}})}\\
&=\bar{C}(\epsilon)R^{p\epsilon}C_0^{1-p\epsilon}K^{n-(n-2)p+(n-2)\frac{p}{q}}\|f_\tau\|^p_{L^q}.
\end{split}
\end{equation}
Plugging this back into (\ref{long}), one has
\[
\begin{split}
\int_{B_R}|Ef|^p\leq &C(K,\epsilon)R^{p\epsilon}\|f\|_{L^q}^p+\\
&\qquad C\bar{C}(\epsilon)R^{p\epsilon}C_0^{1-p\epsilon}\max(1,K^{(k-3)(p/2-1)})K^{\delta+n-(n-2)p+(n-2)\frac{p}{q}}\sum_\tau\|f_\tau\|_{L^q}^p.
\end{split}
\]Observe that there are $\lesssim K^{n-2}$ sectors $\tau$ in total and recall that $p\leq q$, H\"older's inequality implies that
\[
\sum_\tau\|f_\tau\|_{L^q}^p\leq K^{(n-2)(1-\frac{p}{q})}\|f\|_{L^q}^p.
\]Plugging this into the inequality above, one has
\[
\begin{split}
\int_{B_R}|Ef|^p\leq &C(K,\epsilon)R^{p\epsilon}\|f\|_{L^q}^p+\\
&\qquad C\bar{C}(\epsilon)R^{p\epsilon}C_0^{1-p\epsilon}\max(1,K^{(k-3)(p/2-1)})K^{\delta+n-(n-2)p+(n-2)}\|f\|_{L^q}^p
\end{split}
\]where the dependence on $q$ in the exponent of $K$ cancels out.

Then the induction closes if the exponent (excluding $\delta$) of $K$ is strictly negative (so that one can always choose $\delta>0$ small enough to keep the exponent negative). Note that the presence of $C_0$ will not harm us, as for any fixed $\epsilon$, it makes negligible contribution when $K$ is sufficiently large. When $2\leq k\leq 3$, 
\[
n-(n-2)p+(n-2)<0\quad \iff\quad p>2\cdot\frac{n-1}{n-2};
\]when $k>3$,
\[
(k-3)(p/2-1)+n-(n-2)p+(n-2)<0\quad \iff\quad p>2\cdot\frac{2n-k+1}{2n-k-1}.
\]These give exactly the desired lower bound $p(k,n)$, as claimed in (\ref{linear}).

\subsection{$L^q\to L^p$ restriction} This subsection is devoted to the proof of Theorem \ref{mainpq}, again, using the $k$-broad estimate Theorem \ref{kBroadThm}. 

\subsubsection{Interior of (\ref{admpq})} When the pair $(p,q)$ lies strictly inside the interior of the claimed range in (\ref{admpq}), the estimate follows from a very similar argument as in the $L^p\to L^p$ case, so we only sketch the necessary modifications that are needed in our current case $q<p$. More precisely, fix any $R>0$, when $2\leq q\leq p\leq \frac{2n}{n-2}$, Theorem \ref{kBroadThm} tells us that
\[
\|Ef\|_{BL^p_{k,A}(B_R)}\lesssim_{k,\epsilon}R^\epsilon\|f\|_{L^q},\quad \forall p\geq \bar{p}(k,n)=2\cdot\frac{n+k}{n+k-2}.
\]We are going to show that there holds
\begin{equation}\label{linearpq}
\|Ef\|_{L^p(B_R)}\lesssim_\epsilon R^\epsilon \|f\|_{L^q}
\end{equation}whenever $2\leq p\leq \frac{2n}{n-2}$ and
\begin{equation}\label{pqk}
\begin{cases}
p\geq 2\cdot\frac{n+2}{n},\,q'< \frac{n-2}{n}p, &\text{if}\,\, k=2,\\
p\geq \bar{p}(k,n),\, p>\frac{n}{\frac{2n-k-1}{2}-\frac{n-k+1}{q}}, &\text{if}\,\,k\geq 3,
\end{cases}
\end{equation}for some $2\leq k\leq n$.

As in the previous subsection, we start with estimating the ``broad'' part of $Ef$ by Theorem \ref{kBroadThm} and treating the ``narrow'' part of it using the decoupling theorem of Bourgain--Demeter. After decoupling, we apply H\"older's inequality to change from $\ell^2$ to $\ell^q$, reaching the estimate
\[
\int_{B_{K^2}}\big|\sum_{\tau\in V_a}Ef_\tau\big|^p\lesssim_\delta K^\delta\max(1,K^{(k-3)(\frac{1}{2}-\frac{1}{q})p})\left(\sum_{\tau\in V_a}\big(\int W_{B_{K^2}}|Ef_\tau|^p\big)^{\frac{q}{p}}\right)^{\frac{p}{q}}.
\]Summing over $a=1\ldots,A$ and then $B_{K^2}\subset B_R$ using Minkowski inequality,
\[
\begin{split}
&\sum_{B_{K^2}\subset B_R}\sum_{a=1}^A\int_{B_{K^2}}\big|\sum_{\tau\in V_a}Ef_\tau\big|^p\\
\lesssim& K^\delta\max(1,K^{(k-3)(\frac{1}{2}-\frac{1}{q})p})\sum_{B_{K^2}\subset B_R}\left(\sum_{\tau}\big(\int W_{B_{K^2}}|Ef_\tau|^p\big)^{\frac{q}{p}}\right)^{\frac{p}{q}}\\
\lesssim& K^\delta\max(1,K^{(k-3)(\frac{1}{2}-\frac{1}{q})p})\left(\sum_\tau\Big(\sum_{B_{K^2}\subset B_R}\big(\int W_{B_{K^2}}|Ef_\tau|^p\big)^{\frac{q}{p}\cdot\frac{p}{q}}\Big)^{\frac{q}{p}}\right)^{\frac{p}{q}}\\
\lesssim& K^\delta\max(1,K^{(k-3)(\frac{1}{2}-\frac{1}{q})p})\left(\sum_\tau\big(\int_{B_{2R}}|Ef_\tau|^p\big)^{\frac{q}{p}}\right)^{\frac{p}{q}}+\text{RapDec}(R)\|f\|_{L^q}^p,
\end{split}
\]where we have summed up $W_{B_{K^2}}$ to a single weight $W$ as in the previous subsection. This gives us a slightly different form of (\ref{long}):
\begin{equation}\label{longpq}
\int_{B_R}|Ef|^p\leq C(K,\epsilon)R^{p\epsilon}\|f\|_{L^q}^p+CK^\delta\max(1,K^{(k-3)(\frac{1}{2}-\frac{1}{q})p})\left(\sum_\tau\big(\int_{B_{2R}}|Ef_\tau|^p\big)^{\frac{q}{p}}\right)^{\frac{p}{q}}.
\end{equation}

We then proceed in the exact same way as in the previous subsection to apply induction on scales to get (\ref{ftau}). Without the need of using H\"older's inequality, one can plug it into (\ref{longpq}) to directly obtain
\[
\begin{split}
\int_{B_R}|Ef|^p\leq &C(K,\epsilon)R^{p\epsilon}\|f\|_{L^q}^p+\\
&\quad\quad C\bar{C}(\epsilon)R^{p\epsilon}C_0^{1-p\epsilon}\max(1,K^{(k-3)(\frac{1}{2}-\frac{1}{q})p})K^{\delta+n-(n-2)p+(n-2)\frac{p}{q}}\|f\|_{L^q}^p.
\end{split}
\]The induction closes if the exponent (excluding $\delta$) of $K$ is strictly negative. When $k=2$, 
\[
n-(n-2)p+(n-2)\frac{p}{q}<0\quad \iff\quad q'<\frac{n-2}{n}p; 
\]when $k\geq 3$,
\[
(k-3)(\frac{1}{2}-\frac{1}{q})p+n-(n-2)p+(n-2)\frac{p}{q}<0\quad \iff\quad p>\frac{n}{\frac{2n-k-1}{2}-\frac{n-k+1}{q}}.
\]These are exactly the desired conditions in (\ref{pqk}).

\begin{remark}In the case $q=2$, the range of tuples $(p,q,k)$ in (\ref{pqk}) is empty for all $2\leq k\leq n$, which explains the extra restriction one needs to put on $q$ in the admissibility condition (\ref{admpq}). Moreover, the elimination of the endpoint of the range of $p$ follows from $\epsilon$-removal.
\end{remark}

\subsubsection{Boundary of (\ref{admpq})} In the previous subsection, we have already obtained the desired linear restriction estimate for all pairs $(p,q)$ that lie strictly inside the claimed range (\ref{admpq}), it is thus left to examine the boundary case $q'=\frac{n-2}{n}p$ when $k=2$ and $p=\frac{n}{\frac{2n-k-1}{2}-\frac{n-k+1}{q}}$ when $k>3$. 

In order to do this, we apply a bilinear interpolation adapted from Theorem 2.2 of \cite{TVV} where the case of the paraboloid is studied. The key idea here is that linear and bilinear restriction estimates are essentially equivalent on the boundary line of (\ref{admpq}).
\begin{theorem}
Let $n\geq 3$ and $1<p,q<\infty$ be such that $2p>\frac{2(n-1)}{n-2}$ and $q'\leq \frac{n-2}{n}\cdot 2p$. Let $R(q\to 2p)$ denote the linear cone restriction estimate
\[
\|Ef\|_{L^{2p}(\mathbb{R}^n)}\lesssim \|f\|_{L^q}
\]and $R(q\times q\to p)$ denote the bilinear cone restriction estimate
\[
\|(Ef_1)(Ef_2)\|_{L^p(\mathbb{R}^n)}\lesssim \|f_1\|_{L^q}\|f_2\|_{L^q}
\]for all functions $f_i$ supported in $U_i\subset 2B^{n-1}\setminus B^{n-1}$ such that $U_1,U_2$ are transversal. Then, 
\begin{enumerate}
\item $R(q\to 2p)$ implies $R(q\times q\to p)$;
\item if $R(\tilde{q}\times \tilde{q}\to \tilde{p})$ holds for all $(\tilde{p},\tilde{q})$ in a neighborhood of $(p,q)$, then $R(q\to 2p)$ holds.
\end{enumerate}
\end{theorem}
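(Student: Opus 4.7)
The forward direction (1) is immediate and does not use transversality. Write
\[
|(Ef_1)(Ef_2)| = |Ef_1|\cdot|Ef_2|,
\]
and apply H\"older's inequality in $L^p(\mathbb{R}^n)$ with the exponent decomposition $\frac{1}{p}=\frac{1}{2p}+\frac{1}{2p}$:
\[
\|(Ef_1)(Ef_2)\|_{L^p(\mathbb{R}^n)} \leq \|Ef_1\|_{L^{2p}(\mathbb{R}^n)}\|Ef_2\|_{L^{2p}(\mathbb{R}^n)}.
\]
Applying $R(q\to 2p)$ to each factor yields $R(q\times q\to p)$.

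For the reverse direction (2), the plan is to adapt the Whitney-decomposition argument of Tao--Vargas--Vega to the cone via Lorentz rescaling. First, I would decompose the annulus $2B^{n-1}\setminus B^{n-1}$ into dyadic sectors $\tau$ (of angular width $2^{-j}$), and carry out a Whitney decomposition of the product domain: pairs $(\tau,\tau')$ with angular separation $\sim 2^{-j}$ form a transversal collection $\mathcal{W}_j$ at scale $2^{-j}$, and one has the pointwise identity
\[
|Ef(x)|^2 = \sum_{j\geq 0}\sum_{(\tau,\tau')\in\mathcal{W}_j}Ef_\tau(x)\,\overline{Ef_{\tau'}(x)} + (\text{nearly-diagonal terms}).
\]
Then $\|Ef\|_{L^{2p}}^{2p} = \||Ef|^2\|_{L^p}^p$, and I would estimate each Whitney-level piece by applying a bilinear estimate from the given neighborhood of $(p,q)$ after an appropriate rescaling.

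For each pair $(\tau,\tau')\in\mathcal{W}_j$, apply a Lorentz rescaling of the same type used in Subsection 3.1 of the paper, which maps the two thin sectors into two sectors of the truncated cone of unit angular width that are transversal at scale $\sim 1$. As computed in the linear case, the rescaling affects the physical ball and changes the $L^q$ and $L^p$ norms by explicit Jacobian factors depending on $j$, $n$, $p$, $q$. After applying $R(\tilde q\times\tilde q\to\tilde p)$ at a suitable $(\tilde p,\tilde q)$ in the given neighborhood, the rescaling is undone and one obtains
\[
\left\|\sum_{(\tau,\tau')\in\mathcal{W}_j}Ef_\tau\,\overline{Ef_{\tau'}}\right\|_{L^p(\mathbb{R}^n)} \lesssim 2^{j\alpha(\tilde p,\tilde q,n)}\left(\sum_{(\tau,\tau')\in\mathcal{W}_j}\|f_\tau\|_{L^q}^{\tilde q}\|f_{\tau'}\|_{L^q}^{\tilde q}\right)^{1/\tilde q},
\]
combined with a Minkowski inequality to bring the sum over Whitney pairs inside. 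Using almost-orthogonality of the $f_\tau$ in $L^q$ and H\"older in the $\tau$-sum reduces the right-hand side to $\|f\|_{L^q}^2$ up to the rescaling factor.

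The main obstacle is the last step: summing over $j$. On the critical line $q'=\frac{n-2}{n}\cdot 2p$ the exponent $\alpha(p,q,n)$ vanishes identically, so one cannot sum the dyadic series at the target $(p,q)$ directly. However, since $R(\tilde q\times\tilde q\to \tilde p)$ is assumed in a full neighborhood of $(p,q)$, I would apply the bilinear estimate at a slightly different admissible pair $(\tilde p,\tilde q)$ for which $\alpha(\tilde p,\tilde q,n)<0$, making the series geometrically summable; then interpolate the resulting linear estimates (together with a trivial $L^\infty$ bound) to recover the endpoint $R(q\to 2p)$. The correct bookkeeping of the Lorentz Jacobians -- which differ from the parabolic rescaling exponents of \cite{TVV} in the coordinate along the cone's ruling -- and verifying that the exponent balance on the critical line matches the one dictated by necessary conditions is the delicate point where the cone geometry enters; everything else follows the Tao--Vargas--Vega scheme mutatis mutandis.
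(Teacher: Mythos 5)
Your proposal follows the same strategy as the paper: H\"older's inequality for part (1), and Whitney decomposition plus Lorentz rescaling plus dyadic summation for part (2). This is exactly what the paper sketches, namely the adaptation of Theorem 2.2 of Tao--Vargas--Vega to the cone via the rescaling of Subsection 3.1.

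One step in your description of part (2) is off, however. You propose to apply the bilinear estimate at off-critical $(\tilde p,\tilde q)$, sum the geometric series, and then ``interpolate the resulting linear estimates (together with a trivial $L^\infty$ bound) to recover the endpoint $R(q\to 2p)$.'' This cannot work as stated: the linear estimates you would obtain in this way live strictly inside the admissible region, and interpolation with the trivial bounds stays in the interior of the convex hull, so it never reaches the boundary line $q'=\frac{n-2}{n}\cdot 2p$. The actual mechanism in TVV is different: one uses the off-critical bilinear estimate at $(\tilde p,\tilde q)$ together with H\"older on the (bounded, rescaled) physical region to compare the $L^{\tilde p}$ and $L^{p}$ norms, and on the (compactly supported) frequency side to compare $L^{\tilde q}$ and $L^{q}$; this converts the geometric decay in $j$ into the \emph{local} estimate $\|Ef\|_{L^{2p}(B_R)}\lesssim R^{\epsilon}\|f\|_{L^q}$ \emph{at the target pair $(p,q)$ itself}, with $\epsilon$ arbitrarily small because $(\tilde p,\tilde q)$ may be taken arbitrarily close to $(p,q)$. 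The global estimate $R(q\to 2p)$ then follows by $\epsilon$-removal, not by interpolating a family of interior linear estimates. The rest of your plan (the Whitney identity, the Lorentz rescaling bookkeeping, disjointness/orthogonality over $\tau$) is the right approach and coincides with the paper's.
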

It seems that this theorem has not been explicitly stated in the literature before, but the proof of which is very similar to Theorem 2.2 of \cite{TVV}. In particular, the direction of linear implying bilinear restriction simply follows from H\"older's inequality. In order to conclude linear restriction from the bilinear restriction, one partitions the cone into sectors at different scales and explore the quasi-orthogonality between pairs of sectors that are close to each other at each scale, which follows from the bilinear restriction estimate after applying Lorentz rescaling as in Subsection \ref{prescale} above. This then yields enough decay for all the scales to be summable. In fact, when $n\geq 4$, the proof proceeds exactly the same as in Theorem 2.2 of \cite{TVV} after replacing $n$ by $n-1$. When $n=3$, one needs to work through the argument separately as the case $n-1=2$ is not covered in their theorem, but there is no new difficulty that arises. We omit the details. 

Therefore, given $2\leq k\leq n$ and a point $(p,q)$ on the boundary of the region (\ref{admpq}), it suffices to find a neighborhood of $(p,q)$ where the bilinear restriction holds true. Such a neighborhood can be found by interpolating the bilinear restriction in the interior of (\ref{admpq}) that is implied by the linear estimate, together with the following theorem of Wolff.
\begin{theorem}[Theorem 1 of \cite{Wolff}]
For $n\geq 3$ and $p>1+\frac{2}{n}$, the bilinear cone restriction estimate $R(2\times 2\to p)$ holds true.
\end{theorem}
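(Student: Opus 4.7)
The plan is to follow Wolff's induction-on-scales argument. First, I would reduce the claimed global estimate $R(2\times 2\to p)$ to a local statement of the form $\|(Ef_1)(Ef_2)\|_{L^p(B_R)}\lesssim R^\epsilon \|f_1\|_{L^2}\|f_2\|_{L^2}$ for transversal supports $U_1,U_2\subset 2B^{n-1}\setminus B^{n-1}$, deferring the removal of $R^\epsilon$ to Tao's $\epsilon$-removal machinery (which applies because the bilinear estimate is subcritical for $p>1+\tfrac{2}{n}$). A standard parabolic rescaling shows it suffices to consider $U_1,U_2$ of diameter $O(1)$ whose frequency caps make a fixed transversal angle.

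Second, I would perform the wave packet decomposition at the natural scale, writing $f_i=\sum_{\theta,v} f_{i,\theta,v}$ where $\theta$ ranges over sectors of dimension $1\times R^{-1/2}\times\cdots\times R^{-1/2}\times R^{-1}$ on the cone and $v$ over an $R^{1/2}$-lattice, so that $Ef_{i,\theta,v}$ is essentially supported on a thin tube $T=T_{\theta,v}$ of length $R$, radius $R^{1/2}$, and thickness $1$ in the mini direction. Transversality of $U_1,U_2$ forces the long directions $L(\theta_1),L(\theta_2)$ to make an angle $\gtrsim 1$, hence two tubes from opposite families meet in a parallelepiped of volume $\sim R^{(n-2)/2}$, which gives the starting $L^2$ input $\|(Ef_{1,T_1})(Ef_{2,T_2})\|_{L^2}\lesssim R^{-(n-2)/4}\|f_{1,T_1}\|_{L^2}\|f_{2,T_2}\|_{L^2}$.

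Third, and this is the key new ingredient specific to the cone, I would prove Wolff's combinatorial \emph{plate incidence lemma}. Because the cone is ruled by lines, the relevant wave packets live in anisotropic tubes with a distinguished mini direction, and the worst-case arrangement is one in which many tubes from the $U_1$ family stack into a common plate while interacting with a transversal family from $U_2$. Wolff's lemma quantifies that such stacking cannot produce too many bilinear incidences: the number of pairs $(T_1,T_2)$ both meeting a ball of radius $r\le R^{1/2}$ and contributing more than a threshold amount to $|Ef_1 Ef_2|$ is controlled by a geometric count involving the transversal Jacobian, giving a gain beyond what $L^2$ orthogonality yields. This replaces, and is the analogue for the cone of, the Córdoba-type $L^4$ argument that works on the paraboloid.

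Fourth, I would feed the plate lemma into an induction on scales: assuming the local estimate at radius $R^{1-\delta}$, a pigeonholing on the density of wave packets together with the plate count upgrades the $L^2$ input to an $L^p$ estimate at scale $R$ for any $p>1+\tfrac{2}{n}$, losing only $R^{C\delta}$, which closes the induction upon choosing $\delta$ in terms of $\epsilon$. Interpolating the resulting $L^p$ bound with the trivial $L^\infty$ bound and invoking $\epsilon$-removal yields the claimed global estimate. The main obstacle is unquestionably the third step: proving the plate lemma is the heart of Wolff's paper and is where the ruled structure of the cone makes the argument strictly harder than its paraboloid counterpart.
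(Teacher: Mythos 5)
The paper does not prove this statement; it is invoked as Theorem 1 of \cite{Wolff} and used as a black box in the bilinear interpolation step, so there is no ``paper's own proof'' to compare against. Your sketch is a reasonable bird's-eye reconstruction of Wolff's original induction-on-scales argument, and the four stages you identify (reduction to a local $R^\epsilon$-loss estimate via bilinear $\epsilon$-removal, wave packet decomposition into cone-adapted plates, a combinatorial plate-incidence lemma, and induction on scales) do match the architecture of his proof. Two cautions are worth flagging. First, the relevant rescaling for the cone is Lorentz rescaling, not parabolic rescaling; the paper's own Section 3 makes this distinction explicit, and the null directions of the cone mean that parabolic rescaling does not preserve the surface. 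Second, and more substantively, your description of the plate incidence lemma is far too loose to serve as a proof: saying the bad pairs ``are controlled by a geometric count involving the transversal Jacobian'' is essentially a restatement of the goal. Wolff's actual argument is a delicate double-counting scheme about how many plates from one transversal family can cluster along a single light ray while still meeting a fixed $\rho$-ball, and making that precise (including the pigeonholing that separates ``related'' from ``unrelated'' pairs of plates) is genuinely the bulk of his paper. Since the present paper cites the result rather than reproving it, it would be appropriate for you to do the same, or else you would need to supply the combinatorial core rather than gesture at it.
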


\section*{Acknowledgments} 
The authors would like to thank Larry Guth for suggesting the problem and multiple enlightening discussions, as well as for carefully reading through a first draft of the article. The authors are also indebted to Ciprian Demeter, Terence Tao and Ana Vargas for helpful conversations on Lorentz rescaling and bilinear interpolation. The first author is supported in part by NSF-DMS \#1854148. This material is based upon work supported by the National Science Foundation under Grant No. DMS-1440140 while the authors were in residence at the Mathematical Sciences Research Institute in Berkeley, California, during the Spring 2017 semester.

%



\bibliography{RestrictionCone}{}
\bibliographystyle{amsplain}
\end{document}